\documentclass[final]{siamart171218}
\usepackage[english]{babel}
\usepackage{times}
\usepackage{comment}
\usepackage{amsopn,amsfonts,graphicx,cite,cleveref,algorithm,algorithmic}

\newcommand{\tmop}[1]{\ensuremath{\operatorname{#1}}}
\def \R {\mathbb{R}}
\def \N {\mathbb{N}}
\newsiamthm{cor}{Corollary}%[section]
\newsiamthm{prop}{Proposition}%[section]
\newsiamthm{lem}{Lemma}
\newsiamthm{defi}{Definition}

\theoremstyle{plain}

\newtheorem{assump}{Assumption}

\DeclareMathOperator*{\minimize}{\tmop{minimize}}

\DeclareMathOperator*{\argmin}{\tmop{arg~min}}

\def \st {\operatorname*{subject\ to\ }}
\newcommand{\rank}{\operatorname{rank}}
\def \lg        {\langle}
\def \rg        {\rangle}
\def \eye       {\mathbf{I}}
\def \zero      {\mathbf{0}}
\def \vzero     {\vct{0}}

\def \Sp        {\tmop{Sp}}

\newcommand{\vct}[1]{\boldsymbol{#1}}
%   matrices
\newcommand{\mtx}[1]{\mathbf{#1}}
%   block vector

%   block matrix

\newcommand{\set}[1]{\mathcal{#1}}

\newcommand{\mPhi}{\mathbf{\Phi}}

%%%%%%a-z%%%%%%%%%%%%%%%%%%%%%%%%%%%%%%%%%%%
\def \a {\vct{a}}

\def \y {\vct{t}}

\def \x {\vct{x}}
\def \y {\vct{y}}

\newcommand{\vu}{\vct{u}}

\newcommand{\vx}{\vct{x}}
\newcommand{\vy}{\vct{y}}

\newcommand{\mA}{\mtx{A}}

\newcommand{\mF}{\mtx{F}}

\newcommand{\mH}{\mtx{H}}
\newcommand{\mJ}{\mtx{J}}

\newcommand{\mU}{\mtx{U}}
\newcommand{\mV}{\mtx{V}}

\newcommand{\mX}{\mtx{X}}

\def \calA {\set{A}}
\def \calB {\set{B}}
\def \calC {\set{C}}

\def \calN {\set{N}}

\usepackage{colortbl, booktabs,pifont}
\newcommand{\blk}[2]{  
    \begin{minipage}{#1\textwidth} 
        \centering \vspace*{0.08cm}
        #2 \vspace*{0.05cm}     
\end{minipage}}
\newcommand{\cmark}{\ding{51}}%
\newcommand{\xmark}{\ding{55}}%

\title{Provable Bregman-divergence based Methods for Nonconvex and Non-Lipschitz Problems\thanks{Submitted to the editors April 17, 2019. 
The first and second authors contributed equally to this paper. 
\funding{This work was supported by Award N660011824020 from the DARPA Lagrange Program.}}}

% Authors: full names plus addresses.
\author{Qiuwei Li\thanks{Department of Electrical Engineering, Colorado School of Mines.
(\email{qiuli@mines.edu}, \url{http://inside.mines.edu/\string~qiuli/};\email{gtang@mines.edu}, \url{http://inside.mines.edu/\string~gtang/}; \email{mwakin@mines.edu}, \url{http://inside.mines.edu/\string~mwakin/}).}
\and Zhihui Zhu\thanks{Mathematical Institute for Data Science, Johns Hopkins University. 
(\email{zzhu29@jhu.edu}, \url{http://cis.jhu.edu/\string~zhihui/}).}
\and Gongguo Tang\footnotemark[2]
\and Michael B. Wakin\footnotemark[2]}

%\headers{Second-Order Optimality of Bregman Optimizations}{Q. Li, Z.  Zhu, G. Tang, and M. B. Wakin}

\begin{document}

\allowdisplaybreaks
\maketitle

\begin{abstract}%
The (global) Lipschitz smoothness condition is crucial in establishing the convergence theory for most optimization methods. Unfortunately,  most machine learning and signal processing problems are not Lipschitz smooth. This motivates us to generalize the concept of Lipschitz smoothness condition to the relative smoothness condition, which is satisfied by any finite-order polynomial objective function. Further, this work develops new Bregman-divergence based algorithms that are guaranteed to converge to a second-order stationary point for any relatively smooth problem. In addition, the proposed optimization methods cover both the proximal alternating minimization and the proximal alternating linearized minimization when we specialize the Bregman divergence to the Euclidian distance. Therefore, this work not only develops guaranteed optimization methods for non-Lipschitz smooth problems but also solves an open problem of showing the second-order convergence guarantees for these alternating minimization methods.
\end{abstract}

% REQUIRED
\begin{keywords}
Convergence Analysis,  Lipschitz Smoothness Condition, Nonconvex Optimization
\end{keywords}

% REQUIRED
\begin{AMS}
90C25,  68Q25, 68W40, 62B10
\end{AMS}

\section{Introduction}
Consider minimizing a twice continuously differentiable function
\begin{align}
\minimize_{\vx\in\R^n}  f(\vx)
\label{problem}
\end{align}
which can be solved by numerous off-the-shelf algorithms, such as first-order algorithms like  gradient descent  \cite{lee2017first}, perturbed gradient descent \cite{jin2017escape}, nonlinear conjugate gradient method \cite{dai1999nonlinear},  proximal point minimization algorithm \cite{nocedal2006numerical}, and second-order methods like the Newton-CG algorithms \cite{Royer2018,royer2018newton}.

\subsection{The Lipschitz Smoothness Condition}
However, all these optimization algorithms require the objective function $f(\x)$ to be Lipschitz smooth.  When $f$ is twice continuously differentiable, denoted by $f\in\calC^2$,   we say $f$ is {\em $L_f$-Lipschitz smooth} if there exists a constant $L_f$ so that 
\begin{align}
L_{f}\eye\pm \nabla^{2}f(\x) \succeq 0,\quad \forall~ \x\in\R^n
\label{lipschitz}
\end{align}
The Lipschitz smoothness condition plays a fundamental rule in establishing the convergence theory for most optimization methods, including both the first-order and the second-order convergence guarantee. Here, we refer the first-order (or second-order) convergence to the convergence to a first-order (or second-order) stationary point (cf. \Cref{def:critical:point}). For an illustration, let us consider the gradient descent (GD) algorithm, which iteratively updates the sequence along the negative gradient by an amount of a constant step. Mathematically, GD generates a sequence $\{\x_k\}$ by iteratively computing
\[
\x_{k+1}=\x_{k}-\eta \nabla f(\x_{k}), \quad k=1,2,\ldots
\]
The main purpose of using the Lipschitz-smoothness condition is to guarantee a sufficient decrease of the objective function in each step. 
\begin{lemma}
Assume $f$ is $L_f$-Lipschitz smooth. Then GD with step size $\eta$ satisfies 
\begin{align}
f(\vx_{k}) - f(\vx_{k+1}) \ge\left(\frac{1}{\eta} - \frac{L_f}{2}\right)\|\vx_{k} - \vx_{k+1}\|_2^{2}
\label{eqn:descent:lem:gd}
\end{align} 
\label{lem:GD}   
\end{lemma}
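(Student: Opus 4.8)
The plan is to derive the standard descent lemma (a quadratic upper bound on $f$) directly from the Lipschitz smoothness condition \eqref{lipschitz}, and then substitute the gradient descent update to obtain \eqref{eqn:descent:lem:gd}. The whole argument rests on a single second-order Taylor expansion combined with the curvature bound implied by \eqref{lipschitz}; I expect no serious obstacle, since this is the classical sufficient-decrease estimate.

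First I would observe that \eqref{lipschitz} is equivalent to the two-sided bound $-L_f\eye \preceq \nabla^2 f(\vx) \preceq L_f\eye$ for all $\vx$, of which only the upper bound $\nabla^2 f(\vx) \preceq L_f\eye$ is needed here. Applying the second-order Taylor theorem (with the mean-value form of the remainder, valid since $f\in\calC^2$) to the pair $\vx_k, \vx_{k+1}$ gives
\begin{align}
f(\vx_{k+1}) = f(\vx_k) + \langle \nabla f(\vx_k),\, \vx_{k+1}-\vx_k\rangle + \tfrac12 (\vx_{k+1}-\vx_k)^\top \nabla^2 f(\vz)(\vx_{k+1}-\vx_k)
\end{align}
for some $\vz$ on the segment joining $\vx_k$ and $\vx_{k+1}$. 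Bounding the quadratic term via $\nabla^2 f(\vz)\preceq L_f\eye$ then yields the descent inequality $f(\vx_{k+1}) \le f(\vx_k) + \langle \nabla f(\vx_k), \vx_{k+1}-\vx_k\rangle + \tfrac{L_f}{2}\|\vx_{k+1}-\vx_k\|_2^2$.

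Finally I would exploit the update rule $\vx_{k+1}=\vx_k-\eta\nabla f(\vx_k)$, which gives $\nabla f(\vx_k) = \tfrac1\eta(\vx_k-\vx_{k+1})$ and hence $\langle\nabla f(\vx_k),\,\vx_{k+1}-\vx_k\rangle = -\tfrac1\eta\|\vx_k-\vx_{k+1}\|_2^2$. Substituting this into the descent inequality and rearranging produces \eqref{eqn:descent:lem:gd} exactly. The only point requiring genuine care is the rigor of the Taylor remainder (continuity of the Hessian and the mean-value form) and the discipline of invoking only the upper curvature bound rather than the full two-sided estimate; everything else is routine algebra.
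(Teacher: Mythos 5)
Your proof is correct and follows essentially the same route as the paper: the paper invokes the descent inequality $|f(\x)-f(\y)-\lg\nabla f(\y),\x-\y\rg|\le \frac{L_f}{2}\|\x-\y\|_2^2$ as a known consequence of Lipschitz smoothness and plugs in $\y=\x_k$, $\x=\x_{k+1}$, while you simply make the derivation of that inequality explicit via a second-order Taylor expansion before doing the same substitution of the GD update. The algebra checks out.
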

\begin{proof}
This  follows by  plugging  $\y=\x_{k},~\x=\x_{k+1}$ to 
a consequence of the Lipschitz smoothness condition: $\left| f(\x)-f(\y)- \lg\nabla f(\y),\x-\y\rg \right|\le \frac{L_{f}}{2} \|\x-\y\|_{2}^{2}$ for all $\x,\y$.
\end{proof}
   
%Therefore, to make GD decrease the function values, we should choose the step size $\eta<2/L_f$. However, in many cases, the target problem is not Lipschitz smooth. Even if we assume  GD converges to a first-order critical point,  we may meet situations where the iterative algorithms bump into saddle points. The recent  work \cite{lee2017first} shows that most first-order methods  almost surely avoid strict saddles (cf. \Cref{def:critical:point}) from random initialization. However, one pivotal assumption in their argument is still the Lipschitz smoothness condition of the objective function. It is not well understood how the first-order methods can avoid saddles when the objective function is not Lipchitz smooth.

Therefore, as long as the step size is small enough $\eta<2/L_f$, GD makes a decrease of the function value in every step and converges to a first-order stationary point.  Besides, the recent work  \cite{lee2017first} establishes that most first-order algorithms (e.g., GD) can avoid strict saddle points (those first-order stationary points that are not second-order stationary points) for any Lipschitz-smooth problem. Therefore, in addition to its importance in building up the first-order convergence guarantees, the Lipschitz smoothness condition is also pivotal in demonstrating the second-order convergence theories.

Unfortunately,   the objective functions in many machine learning problems---such as low-rank matrix recovery, tensor factorization problem,  neural networks training---is not Lipschitz smooth. This is because, by its definition, the Lipschitz smoothness condition requires the spectral norm of the Hessian matrix is in a constant order like $O(1)$. However, most machine learning problems possess a high-degree (higher than quadratic) polynomial objective function, and as a result, the spectral norm of the  Hessian matrices will be in a polynomial order (e.g., $O(x^2)$) and thus fails to be in a constant order.

\subsection{The Relative Smoothness Condition} 
It is not quite understood how to deal with these non-Lipschitz smooth but popular machine learning problems. This prompts us to generalize the idea of the Lipschitz smoothness condition to the relative smoothness condition by considering a generalized smoothness using the Bregman divergence (cf. \cite{bolte2018first}). 

\begin{defi}[Bregman Divergence]
The Bregman divergence, in term of a strongly convex function $h$, is defined as
\begin{align}
D_h(\vx,\vy)=h(\vx)- h(\vy) - \langle\nabla h(\vy),\vx-\vy\rangle
\end{align}
\label{def:bregman:distance}
\end{defi}

\vspace{-1em}

When $h(\vx) = \|\vx\|_{2}^2/2$, the Bregman divergence reduces to $D_h(\vx,\vy) = \|\vx - \vy\|_{2}^2/2$, the Euclidean distance. 

\begin{defi}[Relative Smoothness]
\label{defi:adative:lipschitz}
A function $f$ is   $L_{f}$-relative smooth  if there exists a   strongly convex function $h$ such that
\begin{align}
\label{eqn:general:lipschitz}
L_{f} \nabla^{2}h(\x)\pm \nabla^{2}f(\x)\succeq 0,\quad\forall~\x\in\R^{n}
\end{align}
\end{defi}
When $h(\vx) = \|\vx\|_{2}^2/2$, the relative smoothness condition reduces to the Lipschitz smoothness condition.

The relative smoothness condition plays a similarly significant role in establishing the convergence guarantees for relatively smooth problems as the Lipschitz smoothness condition does for Lipschitz smooth problems. 
To see this, a direct consequence of the relative smoothness condition is the following generalized descent lemma: 
\begin{align}
|f(\x)-f(\y)-\lg \nabla f(\y),\x-\y\rg | \le L_{f} D_{h} (\x,\y),\quad\forall~\x,\y\in\R^{n}
\label{eqn:general:descent:lemma}
\end{align}
which is essential in developing the convergence theory of the proposed Bregman-divergence based methods  (cf. \Cref{sec:algs}) to solve those non-Lipschitz smooth but relative smooth problems.

\begin{lem}\label{lem:general:decrease}
Suppose $f\in\mathcal{C}^{2}$ is lower-bounded and $L_f$-relatively smooth w.r.t. some  $\sigma$-strongly convex and super-coercive\footnote{We say $h$ is super-coercive if $\lim_{\|\vx\|\rightarrow\infty}{h(\vx)}/{\|\vx\|_{2}} = \infty$, and coercive if $\lim_{\|\vx\|\rightarrow\infty}{h(\vx)}= \infty$.} function $h\in \mathcal{C}^{2}$.    Then each iteration of Bregman gradient descent (B-GD; \Cref{alg:bgd}) or   Bregman proximal point minimization (B-PPM; \Cref{alg:bpd}) is  well-defined, and  respectively satisfies
\begin{align}
\tmop{(B-GD)}\quad& f(\x_{k-1})-f(\x_{k})\ge \left(\frac{1}{\eta}-L_{f}\right)\frac{\sigma}{2} \|\x_{k}-\x_{k-1}\|_{2}^{2}
\label{eqn:general:decrease1},
\\
\tmop{(B-PPM)}\quad&f(\x_{k-1})-f(\x_{k})\ge \frac{\sigma}{2\eta}\|\x_{k}-\x_{k-1}\|_{2}^{2}
\label{eqn:general:decrease2}
\end{align}
\end{lem}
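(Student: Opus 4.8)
The plan is to read off each inequality directly from the variational problem that defines the corresponding iterate, using only two ingredients. The first is the defining optimality of $\x_{k}$: for B-PPM this is just the comparison of the subproblem objective at $\x_{k}$ against its value at $\x_{k-1}$, while for B-GD it is the analogous comparison for the linearized subproblem. The second ingredient is the quadratic lower bound $D_{h}(\x,\y)\ge \frac{\sigma}{2}\|\x-\y\|_{2}^{2}$, which is the standard consequence of $\sigma$-strong convexity of $h$ together with \Cref{def:bregman:distance}. The generalized descent lemma \eqref{eqn:general:descent:lemma} will serve as the bridge that turns the relative-smoothness hypothesis into a usable upper bound on $f(\x_{k})$ in the B-GD case; the B-PPM case will not even require it.

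First I would settle well-definedness. For B-GD (\Cref{alg:bgd}) the iterate $\x_{k}$ minimizes $\lg \nabla f(\x_{k-1}),\x-\x_{k-1}\rg + \frac{1}{\eta}D_{h}(\x,\x_{k-1})$; since $h$ is $\sigma$-strongly convex this subproblem is strongly convex in $\x$, and super-coercivity of $h$ makes $\frac{1}{\eta}D_{h}(\cdot,\x_{k-1})$ dominate the linear term, so the objective is coercive and a unique minimizer exists for every $\eta>0$. For B-PPM (\Cref{alg:bpd}) the iterate $\x_{k}$ minimizes $f(\x)+\frac{1}{\eta}D_{h}(\x,\x_{k-1})$; existence I would obtain from coercivity (the lower-bounded $f$ plus the super-coercive penalty yields a coercive objective), and uniqueness follows in the regime $\eta<1/L_{f}$ because relative smoothness (\Cref{defi:adative:lipschitz}) gives $\nabla^{2}\!\left(f+\frac{1}{\eta}D_{h}\right)\succeq \left(\frac{1}{\eta}-L_{f}\right)\nabla^{2}h\succ 0$.

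For the B-PPM decrease \eqref{eqn:general:decrease2} the argument is the shortest: since $\x_{k}$ minimizes $f(\x)+\frac{1}{\eta}D_{h}(\x,\x_{k-1})$, evaluating at $\x=\x_{k-1}$ (where $D_{h}(\x_{k-1},\x_{k-1})=0$) gives $f(\x_{k})+\frac{1}{\eta}D_{h}(\x_{k},\x_{k-1})\le f(\x_{k-1})$, and substituting the strong-convexity bound on $D_{h}$ yields the claim. For the B-GD decrease \eqref{eqn:general:decrease1} I would first use optimality of $\x_{k}$ against the feasible point $\x_{k-1}$ to get $\lg \nabla f(\x_{k-1}),\x_{k}-\x_{k-1}\rg \le -\frac{1}{\eta}D_{h}(\x_{k},\x_{k-1})$. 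Then plugging $\x=\x_{k},\ \y=\x_{k-1}$ into \eqref{eqn:general:descent:lemma} bounds $f(\x_{k})$ above by $f(\x_{k-1})+\lg\nabla f(\x_{k-1}),\x_{k}-\x_{k-1}\rg+L_{f}D_{h}(\x_{k},\x_{k-1})$; combining the two inequalities cancels the linear term and leaves $f(\x_{k-1})-f(\x_{k})\ge \left(\frac{1}{\eta}-L_{f}\right)D_{h}(\x_{k},\x_{k-1})$, and the strong-convexity bound then produces the stated form.

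I expect the genuinely delicate step to be well-definedness rather than the decrease estimates, which are essentially one-line manipulations once the updates are written out. The subtle point is that super-coercivity, rather than mere coercivity, is exactly what is needed to dominate the linear terms appearing in both subproblems (the gradient direction in B-GD and the $\lg\nabla h(\x_{k-1}),\cdot\rg$ piece of $D_{h}$ in B-PPM), which is presumably why \Cref{lem:general:decrease} assumes it. For B-PPM the nonconvexity of $f$ means existence cannot be taken for granted, and the strong convexity underlying uniqueness only holds once $\eta<1/L_{f}$; I would make sure the implicit step-size restriction used for well-definedness is consistent with the one under which \eqref{eqn:general:decrease1} and \eqref{eqn:general:decrease2} are actually descent guarantees.
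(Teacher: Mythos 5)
Your proposal is correct and follows essentially the same route as the paper's proof: well-definedness via strong convexity plus (super-)coercivity of the subproblem objectives, the B-GD decrease by combining the minimizer's optimality against $\x_{k-1}$ with the generalized descent lemma \eqref{eqn:general:descent:lemma} and then the strong-convexity lower bound $D_{h}(\x_{k},\x_{k-1})\ge\frac{\sigma}{2}\|\x_{k}-\x_{k-1}\|_{2}^{2}$, and the B-PPM decrease by direct comparison of the subproblem value at $\x_{k}$ versus $\x_{k-1}$. Your closing remarks on why super-coercivity (rather than coercivity) is needed, and on the step-size regime required for uniqueness in B-PPM, match the roles these hypotheses play in the paper's argument.
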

The proof is in \Cref{pf:lem:general:decrease}.
Note that since the Bregman divergence can be specialized as the Euclidean distance (when $h(\x)=\|\x\|_2^2$),  the Bregman-divergence based algorithms: B-GD and B-PPM are extensions of the standard Euclidean-distance based algorithms: GD and proximal point minimization algorithm (PPM) \cite{nocedal2006numerical,lee2017first}.

\subsection{Extension to Alternating Minimizations}
A natural way to solve factored matrix optimization problems like $\min_{\mU, \mV} f(\mU\mV^T)$  is via alternating minimization with matrices $\mU$ and $\mV$. However, the state-of-the-art alternating minimization methods (e.g.,   the proximal alternating linearized minimization (PALM)  \cite{bolte2014proximal} and the proximal alternating minimization (PAM) \cite{attouch2010proximal,xu2013block}) can only ensure the first-order convergence and require the objective function $f$ to be Lipschitz smooth. This leads to two main drawbacks of applying these algorithms to the factored matrix optimization problem: 1) even it is recognized that (cf. \cite{ge2017no,li2018non}) many factored matrix optimization problems  has the nice property that any second-order stationary point is globally optimal, the current first-order convergence of these state-of-the-art alternating minimization algorithms cannot help to achieve the global optimal solution; 2) most factored matrix optimization problems are known to be non-Lipschitz smooth, therefore, even the current first-order convergence guarantees cannot apply to these problems. 

This motivates us to generalize the previous Bregman optimization algorithms (e.g., B-GD and B-PPM) that can naturally solve the following  alternating minimization problem
\begin{align}
\minimize_{\x\in\R^n,\y\in\R^m}f(\x,\y)
\label{eqn:problem:alt}
\end{align}
We will call  $f(\x,\y)$ as a bi-variable function for convenience.
Fortunately, similar sufficient decrease property can be established for the Bregman alternating minimization methods  (cf. \Cref{sec:algs}) to solve the non-Lipschitz smooth but the relatively smooth problem. Towards that end, we need similar concepts like bi-Bregman divergence and relative bi-smoothness condition.
\begin{defi}[Bi-Bregman Divergence]
The bi-Bregman divergences, in term of a strongly bi-convex\footnote{$h(\x,\y)$ is strongly bi-convex if it is a strongly convex function in either variable when fixing the other.} function $h(\x,\y)$,   are defined as
\begin{align}
D_h^1(\vx_1,\vx_2;\y)=&h(\vx_1,\y)- h(\vx_2,\y) - \langle\nabla_{\x} h(\vx_2,\y),\vx_1-\vx_2\rangle,
\\
D_h^2(\vy_1,\vy_2;\x)=&h(\vx,\y_1)- h(\vx,\y_2) - \langle\nabla_{\y} h(\vx,\y_2),\vy_1-\vy_2\rangle
\end{align}
\label{def:bi:bregman:distance}
\end{defi}

\vspace{-1em}

When $h(\x,\y)=(\|\x\|_2^2+\|\y\|_2^2)/2$, the above two bi-Bregman divergences will reduce to the Euclidian distances $\|\x_1-\x_2\|_2^2/2$ and  $\|\y_1-\y_2\|_2^2/2$, respectively. 

\begin{defi}[Relative Bi-Smoothness]
\label{defi:adative:lipschitz:coordinate}
$f$ is $(L_1,L_{2})$-relative bi-smooth if there exists a strongly bi-convex function  $h$ such that
 \begin{align}
\label{eqn:general:lipschitz:coordinate}
L_{1} \nabla^{2}_{\x\x}h(\x,\y) \pm \nabla_{\x\x}^{2}f(\x,\y)\succeq 0,~ L_{2} \nabla^{2}_{\y\y}h(\x,\y) \pm \nabla_{\y\y}^{2}f(\x,\y)\succeq 0,~\forall~ \x,\y
\end{align}
\end{defi}
We now provide the sufficient decrease property for Bregman alternating minimizations.
\begin{lem}\label{lem:general:decrease:coordinate}
Suppose $f(\x,\y)\in\mathcal{C}^{2}$ is  lower-bounded and $L_f$-relatively smooth w.r.t. some  $\sigma$-strongly bi-convex and bi-super-coercive\footnote{
$h(\x,\y)$ is  bi-super-coercive if $h(\x,\y)$ is super-coercive in either variable when fixing the other.} function $h(\x,\y)\in \mathcal{C}^{2}$.
Then each iteration of Bregman proximal alternating linearized minimization (B-PALM; \Cref{alg:balsg}) or  Bregman proximal alternating minimization (B-PAM; \Cref{alg:bals}) is  well-defined and  respectively satisfies
\begin{align} 
\tmop{(B-PALM)}\quad
&f(\x_{k\!-\!1},\!\y_{k\!-\!1})\!-\!f(\x_{k},\!\y_{k})\ge \left(\frac{1}{\eta}\!-\!L_{f}\right)\frac{\sigma}{2} \|(\x_{k},\!\y_{k})\!-\!(\x_{k\!-\!1},\!\y_{k\!-\!1})\|_{2}^{2}
\label{eqn:general:decrease2:coordinate:1},
\\
\tmop{(B-PAM)}\quad
&f(\x_{k\!-\!1},\!\y_{k\!-\!1})\!-\!f(\x_{k},\!\y_{k})\ge \frac{\sigma}{2\eta}\|(\x_{k},\y_{k})\!-\!(\x_{k\!-\!1},\y_{k\!-\!1})\|_{2}^{2}\label{eqn:general:decrease2:coordinate}
\end{align}
\end{lem}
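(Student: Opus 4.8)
The plan is to reduce the bi-variable statement to two applications of the single-block decrease lemma (\Cref{lem:general:decrease}), one per coordinate. The key observation is that each block update of B-PALM and B-PAM is, by construction, a single-variable Bregman step applied to the partial function obtained by freezing the other coordinate. Writing the B-PAM updates as
\begin{align*}
\x_{k} &= \argmin_{\x}\; f(\x,\y_{k-1}) + \frac{1}{\eta} D_h^1(\x,\x_{k-1};\y_{k-1}),\\
\y_{k} &= \argmin_{\y}\; f(\x_{k},\y) + \frac{1}{\eta} D_h^2(\y,\y_{k-1};\x_{k}),
\end{align*}
the $\x$-update is exactly the B-PPM step on the partial function $g(\cdot):=f(\cdot,\y_{k-1})$ with Bregman reference $h(\cdot,\y_{k-1})$, and the $\y$-update is a B-PPM step on $f(\x_{k},\cdot)$ with reference $h(\x_{k},\cdot)$. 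The B-PALM updates are identical except that $f$ is replaced by its first-order linearization in the active block, i.e.\ $\x_{k}=\argmin_{\x}\lg\nabla_{\x}f(\x_{k-1},\y_{k-1}),\x-\x_{k-1}\rg+\frac{1}{\eta}D_h^1(\x,\x_{k-1};\y_{k-1})$, which is precisely a B-GD step on $g$.

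First I would verify that the hypotheses of \Cref{lem:general:decrease} transfer to these partial functions. For the $\x$-update, the partial reference $h(\cdot,\y_{k-1})$ is $\sigma$-strongly convex by strong bi-convexity and super-coercive by bi-super-coercivity, so the inner minimizer exists and the update is well-defined; moreover $g=f(\cdot,\y_{k-1})$ is lower-bounded and, by the first inequality of \Cref{defi:adative:lipschitz:coordinate}, is $L_1$-relatively smooth with respect to $h(\cdot,\y_{k-1})$ because $\nabla^2 g=\nabla^2_{\x\x}f$ and $\nabla^2 h(\cdot,\y_{k-1})=\nabla^2_{\x\x}h$. The symmetric statement holds for the $\y$-update with constant $L_2$. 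Thus \Cref{lem:general:decrease} applies verbatim to each block.

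Applying the B-GD bound of \Cref{lem:general:decrease} to the two blocks of B-PALM gives
\begin{align*}
f(\x_{k-1},\y_{k-1}) - f(\x_{k},\y_{k-1}) &\ge \left(\frac{1}{\eta}-L_1\right)\frac{\sigma}{2}\|\x_{k}-\x_{k-1}\|_2^2,\\
f(\x_{k},\y_{k-1}) - f(\x_{k},\y_{k}) &\ge \left(\frac{1}{\eta}-L_2\right)\frac{\sigma}{2}\|\y_{k}-\y_{k-1}\|_2^2.
\end{align*}
Adding these telescopes the intermediate value $f(\x_{k},\y_{k-1})$; bounding both coefficients below by $\left(\frac{1}{\eta}-L_f\right)\frac{\sigma}{2}$ with $L_f:=\max\{L_1,L_2\}$, and using the Pythagorean identity $\|(\x_{k},\y_{k})-(\x_{k-1},\y_{k-1})\|_2^2=\|\x_{k}-\x_{k-1}\|_2^2+\|\y_{k}-\y_{k-1}\|_2^2$, yields \cref{eqn:general:decrease2:coordinate:1}. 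The identical argument with the B-PPM bound---whose coefficient $\frac{\sigma}{2\eta}$ carries no dependence on the smoothness constant---produces \cref{eqn:general:decrease2:coordinate} for B-PAM.

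Since the reduction is exact, I expect no genuine obstacle beyond bookkeeping. The only points requiring care are checking that restricting $h$ to a single coordinate preserves both strong convexity and super-coercivity (so that the block minimizers exist and \Cref{lem:general:decrease} is applicable in each block), and unifying the two block constants $L_1,L_2$ into the single $L_f$ appearing in the B-PALM bound.
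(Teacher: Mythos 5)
Your proposal is correct and is exactly the paper's argument: the paper's entire proof is the one-line remark that \Cref{lem:general:decrease:coordinate} ``follows by applying \Cref{lem:general:decrease} for two times,'' and your write-up simply fills in the bookkeeping (hypothesis transfer to the frozen-coordinate partial functions, telescoping the intermediate value $f(\x_k,\y_{k-1})$, and unifying $L_1,L_2$ into $\max\{L_1,L_2\}$) that the paper leaves implicit.
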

The proof of \Cref{lem:general:decrease:coordinate} follows  by  applying \Cref{lem:general:decrease} for two times. 
Similarly, since the bi-Bregman divergences can be specialized as the Euclidean distances, B-GD and B-PPM are generalizations of the standard Euclidean-distance based proximal alternating minimization algorithms:  PALM  \cite{bolte2014proximal} and PAM \cite{attouch2010proximal,xu2013block}, respectively. 

\subsection{Main Results}
\label{sec:algs}

This work provides the second-order convergence guarantees for these four Bregman-divergence based algorithms to deal with non-Lipschitz smooth but relative smooth problems. 

\begin{algorithm}[H]
\caption{Bregman Gradient Descent (B-GD)}
\label{alg:bgd}
\begin{algorithmic}
\STATE \textbf{Input:}  Some $h(\x)\in\calC^2$ so that $f(\x)$ is $L_f$-relatively smooth w.r.t. $h(\x)$

\STATE \textbf{Initialization:}  $\vx_0$

\STATE \textbf{Recursion:} Set $\eta\in(0,{1}/{L_{f} })$ and
 iteratively generate  $\{\x_{k}\}_{k\in\N}$ via
\begin{align}
\label{eqn:bgd1}
\x_{k}=\argmin_{\vx} \langle\nabla f(\x_{k-1}),\x-\x_{k-1}\rangle+\frac{1}{\eta}D_h(\x,\x_{k-1})
\end{align}
\vspace{-1em}
\end{algorithmic}
\end{algorithm}

\vspace{-1em}

\begin{algorithm}[H]
\caption{Bregman Proximal Point Minimization (B-PPM)}
\label{alg:bpd}
\begin{algorithmic}
    \STATE \textbf{Input:}   Some $h(\x)\in\calC^2$ so that $f(\x,\y)$ is $L_f$-relatively smooth w.r.t. $h(\x,\y)$

    \STATE \textbf{Initialization:}  $\vx_0$

    \STATE \textbf{Recursion:} Set $\eta\in(0,\frac{1}{L_{f} })$ and tteratively generate  $\{\vx_{k}\}_{k\in\N}$ via
\begin{align}
\x_{k}=\argmin_{\vx}  f(\x)+\frac{1}{\eta}D_h(\x,\x_{k-1})
\label{eqn:bpd1}
\end{align}
\vspace{-1em}
\end{algorithmic}
\end{algorithm}

\vspace{-1em}

\begin{algorithm}[H]
\caption{Bregman Proximal Alternating Linearized Minimization (B-PALM)}
\label{alg:balsg}
\begin{algorithmic}
\STATE \textbf{Input:} Some $h(\x,\y)\in\calC^2$ so that $f(\x)$ is $(L_1,L_2)$-relatively bi-smooth w.r.t. $h(\x,\y)$

\STATE \textbf{Initialization:}  $(\x_{0},\y_{0})$

\STATE \textbf{Recursion:} Set  $\eta\in(0,1/\max\{L_{1},L_2\})$ and iteratively generate  $\{\x_{k},\y_{k}\}_{k\in\N}$ via
\begin{equation}
\begin{aligned}
\x_{k}=& \argmin_{\x} \lg \nabla_{\x} f(\x_{k-1},\y_{k-1}), \x-\x_{k-1}\rg+\frac{1}{\eta}D^1_{h}(\x,\x_{k-1};\y_{k-1}),\\
\y_{k}=& \argmin_{\y} \lg\nabla_{\y} f(\x_{k},\y_{k-1}),\y-\y_{k-1}\rg+\frac{1}{\eta}D^2_{h}(\y,\y_{k-1};\vx_{k})
\end{aligned}
\label{update:gd:coordinate}
\end{equation}
\vspace{-1em}
\end{algorithmic}
\end{algorithm}

\vspace{-1em}

\begin{algorithm}[H]
\caption{Bregman Proximal Alternating Minimization (B-PAM)}
\label{alg:bals}
\begin{algorithmic}
\STATE \textbf{Input:}  Some $h(\x,\y)\in\calC^2$ so that $f(\x,\y)$ is $(L_1,L_2)$-relatively bi-smooth w.r.t. $h(\x,\y)$

    \STATE \textbf{Initialization:}  $(\x_{0},\y_{0})$

     \STATE \textbf{Recursion:} Set  $\eta\in(0,1/\max\{L_{1},L_2\})$ and iteratively generate  $\{\vx_{k},\y_{k}\}_{k\in\N}$ via
\begin{equation}
\begin{aligned}
\x_{k}=& \argmin_{\x} f(\x,\y_{k-1})+\frac{1}{\eta}D^1_{h}(\x,\x_{k-1};\y_{k-1}),\\
\y_{k}=& \argmin_{\y}  f(\x_{k},\y)+\frac{1}{\eta}D^2_{h}(\y,\y_{k-1};\vx_{k})
\end{aligned}
\label{update:proximal:coordinate}
\end{equation}
\vspace{-1em}
\end{algorithmic}
\end{algorithm}

\vspace{-0.5em}

It is worth reminding  that these Bregman-divergence based algorithms are generalizations of those traditional Euclidian-distance based  algorithms to deal with non-Lipschitz smooth problems:
\begin{itemize}
\item[1)] B-GD (\Cref{alg:bgd}) generalizes the traditional Euclidian-distance based gradient descent algorithm (GD) \cite{lee2017first};
\item[2)] B-PPM (\Cref{alg:bpd}) generalizes the  traditional Euclidian-distance basedproximal point minimization algorithm (PPM) \cite{nocedal2006numerical,lee2017first};
\item[3)] B-PALM (\Cref{alg:balsg}) generalizes the traditional Euclidian-distance based proximal alternating linearized minimization algorithm (PALM) \cite{attouch2010proximal,bolte2014proximal}; 
\item[4)] B-PAM  (\Cref{alg:bals}) generalizes the traditional Euclidian-distance based proximal alternating minimization algorithm (PAM) \cite{xu2013block}.
\end{itemize}
Note that although B-GD has been studied in the previous work \cite{bolte2018first}, this work complements their first-order convergence theory by providing the second-order convergence theory for B-GD, and all the remaining algorithms are newly developed. Further, these four proposed Bregman-divergence based algorithms can work for any relative-smooth problems, relaxing the original requirement of the Lipschitz smoothness condition.   

It is deserving recognizing that the second-order convergence theory for these traditional proximal alternating minimization algorithms (e.g., PALM  \cite{attouch2010proximal,bolte2014proximal} and PAM  \cite{xu2013block}) is still an open problem in the literature. Therefore, this work not only relaxes the requirement of the Lipschitz smoothness condition but also solves an open problem of the second-order convergence guarantees to complement the current first-order convergence theories in \cite{attouch2010proximal,bolte2014proximal,xu2013block}. For convenience, we compare the proposed Bregman-divergence based algorithms with the according state-of-the-art Euclidean-distance based algorithms in \Cref{tab:compare}.

\begin{table}[h!t]
    \centering
                \begin{tabular}{lcccll}
                    \toprule 
                    \blk{0.14}{\textbf{Algorithms}} 
                    & \blk{0.2}{\textbf{Lipchitz Smoothness}}    
                    & \blk{0.2}{\textbf{First-order Convergence}}
                    &\blk{0.2}{\textbf{Second-order Convergence}}   
                    \\
                    \midrule 
                    GD\cite{lee2017first} &\cmark & \cmark  &
                    \cmark
                    \\[0.5ex]
\rowcolor[gray]{0.9}B-GD, \Cref{alg:bgd} &\xmark & \cmark  &
                    \cmark
                    \\[0.5ex]
                    PPM\cite{nocedal2006numerical,lee2017first} &\cmark&\cmark&\cmark
                    \\
\rowcolor[gray]{0.9}B-PPM, \Cref{alg:bpd} &\xmark&\cmark&\cmark
                    \\[0.5ex]
                    PALM\cite{bolte2014proximal} &\cmark&\cmark& \xmark
                    \\
\rowcolor[gray]{0.9}B-PALM, \Cref{alg:balsg} &\xmark&\cmark&\cmark
                    \\[0.5ex]
                    PAM\cite{attouch2010proximal,xu2013block} &\cmark&\cmark& \xmark
                    \\[0.5ex]
\rowcolor[gray]{0.9}B-PAM, \Cref{alg:bals} &\xmark&\cmark&\cmark
                    \\[-0.2ex]
                    \bottomrule
                \end{tabular} 
    \caption {Compare the proposed Bregman methods: B-GD, B-PPM, B-PALM, and B-PAM with several popular Euclidean-distance based algorithms: GD, PPM, PALM, and B-PAM.}
\label{tab:compare}
\end{table}

\medskip  
We build our main results upon the following assumptions from Lemmas \ref{lem:general:decrease} and \ref{lem:general:decrease:coordinate}.
 \begin{assump}
\label{assump:f}
$f\in\mathcal{C}^{2}$ is a coercive, lower-bounded,  KL function.
\end{assump}

\begin{assump}
\label{assump:h}
$f$ is relatively (bi) smooth  w.r.t. to some strongly (bi) convex and (bi) super-coercive function $h\in\calC^2$.
\end{assump}

\begin{theorem}\label{thm:main}
Under Assumptions  \ref{assump:f}--\ref{assump:h}, 
B-GD, B-PPM, B-PALM, and B-PAM converge almost surely to a second-order stationary point of $f$ from random initialization.
\end{theorem}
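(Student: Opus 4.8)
The plan is to split the claim into a \emph{deterministic} first-order convergence statement and a \emph{probabilistic} saddle-avoidance statement, handling all four algorithms through a single iteration map $\z_{k}=g(\z_{k-1})$ read off from the optimality conditions of the subproblems (for instance B-GD gives $\nabla h(\z_{k})=\nabla h(\z_{k-1})-\eta\nabla f(\z_{k-1})$, i.e.\ $g=(\nabla h)^{-1}\circ(\nabla h-\eta\nabla f)$, which is well defined and $C^{1}$ because $h$ is strongly convex and super-coercive, so $\nabla h$ is a global diffeomorphism). First I would show that \emph{every} trajectory converges to a critical point. The sufficient-decrease inequalities of \Cref{lem:general:decrease} and \Cref{lem:general:decrease:coordinate} give $f(\z_{k-1})-f(\z_{k})\ge a\|\z_{k}-\z_{k-1}\|_{2}^{2}$ for some $a>0$; since $f$ is lower bounded (\Cref{assump:f}), $f(\z_{k})$ converges and $\sum_{k}\|\z_{k}-\z_{k-1}\|_{2}^{2}<\infty$, while coercivity confines $\{\z_{k}\}$ to a compact set on which $\nabla f,\nabla h$ are Lipschitz. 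The optimality conditions then yield a relative-error bound $\|\nabla f(\z_{k})\|_{2}\le b\|\z_{k}-\z_{k-1}\|_{2}$. Feeding these two estimates into the KL inequality (\Cref{assump:f}) and running the standard Attouch--Bolte--Svaiter argument produces a finite trajectory length $\sum_{k}\|\z_{k}-\z_{k-1}\|_{2}<\infty$, so $\z_{k}\to\z^{\star}$ with $\nabla f(\z^{\star})=0$.

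It then remains to prove that, from an absolutely continuous random initialization, this limit is almost surely not a strict saddle. I would invoke the center-stable manifold framework of \cite{lee2017first}: if $g$ is a $C^{1}$ local diffeomorphism and every strict-saddle fixed point $\z^{\star}$ is linearly unstable, i.e.\ $\rho(Dg(\z^{\star}))>1$, then the set of initializations whose orbit converges to a strict saddle is a countable union (via second countability) of $g$-preimages of center-stable manifolds of dimension $<n$, hence Lebesgue-null, and $g$-preimages of null sets are null because $g$ is a local diffeomorphism. That $g$ is a local diffeomorphism follows by writing $Dg(\z)=\mathcal{P}(\z)^{-1}\bigl(\mathcal{P}(\z)-\eta\nabla^{2}f(\z)\bigr)$ and checking, via relative (bi-)smoothness (\Cref{assump:h}) and $\eta<1/\max\{L_{1},L_{2}\}$, that $\mathcal{P}-\eta\nabla^{2}f$ is nonsingular everywhere.

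The crux is the instability $\rho(Dg(\z^{\star}))>1$, which I would establish uniformly for all four maps. Differentiating the optimality conditions shows that at a critical point the Jacobian always has the form $Dg(\z^{\star})=\eye-\eta\,\mathcal{P}^{-1}\nabla^{2}f(\z^{\star})$, where $\mathcal{P}$ is assembled from the block-diagonal of $\nabla^{2}h$ plus, for the alternating methods, the strictly lower-triangular cross-block of $\nabla^{2}f$ (the off-diagonal blocks of $\nabla^{2}h$ cancel in the computation). Although relative bi-smoothness controls only the diagonal blocks of $\nabla^{2}f$, this is exactly enough to force $\mathcal{P}+\mathcal{P}^{\top}-\eta\nabla^{2}f(\z^{\star})\succ0$, and a short computation then gives the Stein--Lyapunov identity
\[
Dg(\z^{\star})^{\top}\nabla^{2}f(\z^{\star})\,Dg(\z^{\star})=\nabla^{2}f(\z^{\star})-\mtx{R},\qquad \mtx{R}\succeq0,\ \ker\mtx{R}=\ker\nabla^{2}f(\z^{\star}).
\]
By the discrete-time (Stein) inertia theorem, the number of eigenvalues of $Dg(\z^{\star})$ strictly outside the unit disk equals the number of negative eigenvalues of $\nabla^{2}f(\z^{\star})$; at a strict saddle the latter is at least one, so $\rho(Dg(\z^{\star}))>1$. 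Combining the two parts, almost every initialization yields a trajectory converging to a critical point that is not a strict saddle, i.e.\ to a second-order stationary point of $f$.

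I expect the instability step for the alternating schemes B-PALM and B-PAM to be the main obstacle: their maps are Gauss--Seidel-type compositions whose Jacobians are non-normal, and the cross-Hessian $\nabla^{2}_{\x\y}f$ is \emph{not} bounded by the relative bi-smoothness constants, so naive spectral-norm or determinant estimates break down (indeed $\det Dg(\z^{\star})$ may lie below $1$ even at a saddle). The Lyapunov identity is what rescues the argument, reducing a delicate spectral statement to an inertia count that uses only the diagonal-block bounds. The residual subtlety is the degenerate saddle, where $\nabla^{2}f(\z^{\star})$ is singular and $\mtx{R}$ is merely positive semidefinite; there I would recover $\rho>1$ by tracking the strict decrease of the indefinite energy $\z^{\top}\nabla^{2}f(\z^{\star})\z$ along the linearized iteration on the negative eigenspace, or fold the kernel directions into the center manifold, either of which keeps the exceptional set null.
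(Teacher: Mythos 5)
Your overall architecture matches the paper's: a KL-based argument for convergence to a critical point (the paper outsources this to \Cref{thm:KL} after verifying \textbf{(C1)}--\textbf{(C2)}, exactly the two estimates you derive), followed by \Cref{thm:jason} applied to the iteration map $g$, with the whole burden falling on showing $\Sp(Dg(\z^\star))>1$ at every strict saddle. Your route through that last step is genuinely different and, where it works, more unified than the paper's: you write $Dg(\z^\star)=\eye-\mM^{-1}\nabla^2 f(\z^\star)$ and observe that for all four algorithms $\mM+\mM^\top-\nabla^2 f(\z^\star)\succ 0$ because the cross-Hessian blocks cancel (this is correct; e.g.\ for B-PALM the matrix $\mM+\mM^\top-\nabla^2f(\z^\star)$ is block diagonal with blocks $2\mH_i/\eta-\mF_{ii}\succ0$), which yields the Stein identity $Dg^\top\nabla^2f\,Dg=\nabla^2f-\mtx{R}$ with $\mtx{R}\succeq0$ and an inertia count. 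The paper instead symmetrizes the eigenvalue equation into a pencil $\mJ(\mu)$ that equals a positive definite matrix at one endpoint and (a positive multiple of) $\nabla^2f(\z^\star)$ at the other, and extracts an eigenvalue of $Dg$ of modulus greater than one from the continuity of $\lambda_{\min}(\mJ(\mu))$.

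The gap is exactly where you flagged it, and it is not a removable corner case: degenerate strict saddles, where $\nabla^2f(\z^\star)$ has a negative eigenvalue \emph{and} a nontrivial kernel. There $\mtx{R}$ is only positive semidefinite with $\ker\mtx{R}=\ker\nabla^2f(\z^\star)$, and the inertia form of Stein's theorem needs either $\mtx{R}\succ0$ or observability of the pair $(Dg,\mtx{R})$; the latter fails because every $v\in\ker\nabla^2f(\z^\star)$ is a fixed vector of $Dg(\z^\star)$, so $\mtx{R}\,Dg(\z^\star)^kv=0$ for all $k$. Neither of your proposed repairs closes this. The energy argument ($v_k^\top\nabla^2f(\z^\star)v_k$ nonincreasing and bounded above by a negative constant) only shows the linearized orbit cannot remain bounded, which is still consistent with $\Sp(Dg(\z^\star))=1$ and a defective unit-modulus eigenvalue; and folding kernel directions into the center manifold does not help, because \Cref{thm:jason} and \Cref{def:unstable:fixed:point} require an eigenvalue \emph{strictly} outside the unit disk---if none exists, the center-stable manifold is full-dimensional and the measure-zero conclusion evaporates. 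This matters for the paper's motivating applications: Burer--Monteiro objectives typically have singular Hessians at their saddles owing to rotational invariance. The paper's pencil argument costs nothing extra in the degenerate case, since it uses only $\lambda_{\min}(\nabla^2f(\z^\star))<0$ and continuity of eigenvalues; to salvage the inertia route you would need to handle $\ker\nabla^2f(\z^\star)$ explicitly (e.g.\ by a quotient or restriction argument), or simply fall back on the continuity argument for the degenerate saddles.
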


Some remarks are as follows. 

First of all, these assumptions are mild and we show that any finite-degree polynomial objective function or even a non-polynomial objective function (but with a polynomial-order Hessian) satisfies all these assumptions. See \Cref{sec:applications} for detailed discussions.

In addition, it is worth noting that the coercivity and KL assumptions are used merely to show the convergence to a critical point, and are not assumed in showing the avoiding-saddle property, like what Lee et al. did in the seminal work \cite{lee2017first}. Therefore, this work complements \cite{lee2017first} by obtaining the same results but without requiring the Lipschitz smoothness condition and keeping the same other assumptions. 

That being said, one may argue that with the coercivity assumption, it is always possible to compute the local Lipschitz constant---related to the initialization $\vx_0$---for the level set $\calB_{\vx_0}=\{\x: f(\x)\le f(\x_0)\}$, denoted by $L_f(\calB_{\x_0}):=\max_{\x\in\calB_{\vx_0}}\|\nabla^2 f(\x)\|$. Then, gradient descent with $\eta<1/L_f(\calB_{\x_0})$ obeys both sufficient decrease and avoiding-saddle properties. However, there are two main drawbacks to implement this idealistic approach in practice. First, each time it requires additionally resources to compute the level set $\calB_{\vx_0}$ which could be time consuming. Second, the set $L_f(\calB_{\x_0})$ could be very large, giving a very large local Lipschitz constant $L_f(\calB_{x_0})$ which then forces GD to use a very small step size, resulting in 
an extremely poor algorithm efficiency (like \Cref{fig:bgd:PCA}). Indeed, this is one important advantage of B-GD \cite{bolte2018first}  to allow adaptive step sizes. We complement this work  by providing the second-order convergence theory for B-GD.

Further, it is worth reminding that the proposed Bregman-divergence based optimization methods cover both the proximal alternating minimization (PAM) and the proximal alternating linearized minimization (PALM) when we specialize the Bregman divergence to the Euclidian distance. Therefore, this work not only develops guaranteed optimization methods for non-Lipschitz smooth problems but also solves an open problem of the second-order convergence guarantees to complement the current first-order convergence theories \cite{attouch2010proximal,bolte2014proximal,xu2013block}.
 
Finally,  as many popular (nonconvex) machine learning and signal processing problems \cite{ge2017no,sun2016geometric,chi2018nonconvex,zhu2018global, zhu2017global,li2018non, LiEtAl2017Geometry, zhu2019global} have such a landscape property as all second-order stationary points are globally optimal solutions,  the global optimality can be achieved by the proposed  Bregman-divergence based algorithms in solving this particular class of problems.
\medskip

\section{Stylized Applications}\label{sec:applications}
\subsection{Polynomial Objective Functions}
First of all, we show that any lower-bounded and coercive finite-degree polynomial function satisfies all the main assumptions. Before proceeding,  we recall that $f(\x)$ is a $d$th-degree polynomial  if the highest degree of $\x$ among all monomials of $f(\x)$ is $d$. This definition can be easily extended to the bi-variable case: we define that $f(\x,\y)$ is a $(d_1,d_2)$th-degree polynomial if it is a $d_1$th-degree polynomial when $\y$ is fixed and $d_2$th-degree polynomial when $\x$ is fixed.
\begin{lem}\label{lem:h:exist}
Suppose $f$ is any coercive and lower-bounded $d$th-degree (or $(d_1,d_2)$th-degree for the bi-variable case) polynomial function with $d,d_1,d_2\ge 2$. Set  $h$ to be
\begin{equation}
\begin{aligned}
h(\x)&=
\frac{\alpha}{d}\|\x\|_{2}^{d}+\frac{\sigma}{2}\|\x\|_{2}^{2}+1,
\\
h(\x,\y)&=
\left(\frac{\alpha}{d_1}\|\x\|_{2}^{d_1}+\frac{\sigma}{2}\|\x\|_{2}^{2}+1\right)\left(\frac{\alpha}{d_2}\|\y\|_{2}^{d_2}+\frac{\sigma}{2}\|\y\|_{2}^{2}
+1\right)
\label{eqn:h:exist}
\end{aligned}
\end{equation}
for any $\alpha,~\sigma>0$.
Then $(f,h)$ satisfies Assumptions  \ref{assump:f}--\ref{assump:h}.
\end{lem}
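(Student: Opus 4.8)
The plan is to verify each ingredient of Assumptions \ref{assump:f}--\ref{assump:h} in turn, handling the single-variable $h$ first and then reducing the bi-variable case to it. For Assumption \ref{assump:f}, coercivity and lower-boundedness of $f$ are hypotheses, and $f\in\mathcal{C}^2$ is automatic since $f$ is a polynomial. The only nontrivial point is the KL property; here I would invoke the classical fact that every real-analytic function---in particular every polynomial---satisfies the {\L}ojasiewicz gradient inequality and is therefore a KL function, so no computation is needed.

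Next I would establish the three properties of $h$ required by Assumption \ref{assump:h} in the single-variable case. For $h\in\mathcal{C}^2$, the only concern is the term $\|\x\|_2^d$ near the origin; computing $\nabla^2(\|\x\|_2^d)=d\|\x\|_2^{d-2}\eye+d(d-2)\|\x\|_2^{d-4}\x\x^\top$ shows that, because $d\ge 2$, both summands remain continuous as $\x\to\vzero$ (the second has operator norm $d(d-2)\|\x\|_2^{d-2}$), so $h\in\mathcal{C}^2$. The same computation gives strong convexity at once: $\nabla^2 h(\x)=\alpha\big(\|\x\|_2^{d-2}\eye+(d-2)\|\x\|_2^{d-4}\x\x^\top\big)+\sigma\eye\succeq\sigma\eye$, since the bracketed matrix is positive semidefinite for $d\ge 2$. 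Super-coercivity is immediate from $h(\x)/\|\x\|_2\ge\frac{\alpha}{d}\|\x\|_2^{d-1}\to\infty$ as $d-1\ge 1$.

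The heart of the argument is relative smoothness, for which I would prove the sufficient condition $\|\nabla^2 f(\x)\|\le L_f\,\lambda_{\min}(\nabla^2 h(\x))$; this suffices because it yields $L_f\nabla^2 h(\x)\succeq\|\nabla^2 f(\x)\|\eye\succeq\pm\nabla^2 f(\x)$. On the right I use the lower bound $\lambda_{\min}(\nabla^2 h(\x))\ge\alpha\|\x\|_2^{d-2}+\sigma$ from the previous paragraph. On the left, each entry of $\nabla^2 f$ is a polynomial of degree at most $d-2$, so a routine monomial estimate gives $\|\nabla^2 f(\x)\|\le C(1+\|\x\|_2^{d-2})$ for a constant $C$ depending only on $f$. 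Substituting $t=\|\x\|_2^{d-2}\ge 0$ reduces the target inequality to $L_f(\alpha t+\sigma)\ge C(1+t)$ for all $t\ge 0$, which holds whenever $L_f\ge C\max\{1/\alpha,\,1/\sigma\}$; thus a finite $L_f$ exists.

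For the bi-variable case I would carry out the same estimate for $\nabla^2_{\x\x}$ and $\nabla^2_{\y\y}$ separately, and here the product form $h(\x,\y)=h_1(\x)h_2(\y)$ together with the additive $+1$'s is essential. Since $\nabla^2_{\x\x}h=h_2(\y)\nabla^2 h_1(\x)$ and $h_2(\y)\ge 1$, strong convexity in $\x$ persists with the \emph{uniform} constant $\sigma$ (and symmetrically in $\y$), and bi-super-coercivity follows as before. The subtlety for relative bi-smoothness is that $\nabla^2_{\x\x}f(\x,\y)$ lowers the $\x$-degree by two but retains the full $\y$-degree $d_2$, giving $\|\nabla^2_{\x\x}f\|\le C(1+\|\x\|_2^{d_1-2})(1+\|\y\|_2^{d_2})$; the matching growth in $\y$ is supplied exactly by the factor $h_2(\y)\ge c(1+\|\y\|_2^{d_2})$ sitting inside $\nabla^2_{\x\x}h$, so after cancelling the common $\y$-factor the estimate collapses to the one-variable inequality already solved. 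I expect this last observation---that the product structure with the $+1$ offsets is precisely what lets $h_2(\y)$ absorb the residual $\y$-growth of $\nabla^2_{\x\x}f$ while keeping the strong-convexity constant from degenerating---to be the main conceptual obstacle; the remaining work is routine polynomial growth bookkeeping.
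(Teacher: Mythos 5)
Your proposal is correct and follows essentially the same route as the paper's proof: verify the KL property by a standard class argument (the paper invokes semi-algebraicity, you invoke real-analyticity---both standard for polynomials), compute $\nabla^2 h$ explicitly to get $\mathcal{C}^2$, $\sigma$-strong convexity, super-coercivity, and the lower bound $\nabla^2 h(\x)\succeq(\alpha\|\x\|_2^{d-2}+\sigma)\eye$, then bound $\|\nabla^2 f(\x)\|$ by a constant times $1+\|\x\|_2^{d-2}$ and compare, with the bi-variable case handled by exactly the product-structure cancellation you describe. The only difference is cosmetic: the paper expands $f$ in supersymmetric tensor form to make the constant $L_f$ explicit, whereas you leave it implicit.
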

Lemma~\ref{lem:h:exist} is proved in \Cref{sec:pf:lem:h:exist}. Now together with \Cref{thm:main}, we obtain that the proposed Bregman algorithms can be used to minimize any lower-bounded finite-degree polynomial.
\begin{cor}\label{cor:polynomial}
Suppose $f$ is any coercive, lower-bounded $d$th-degree (or $(d_1,d_2)$-degree) polynomial function with $d,d_1,d_2\ge 2$.  Set $h$ according to \eqref{eqn:h:exist}.
Then  B-GD, B-PPM, B-PALM, and B-PAM almost surely converge to a second-order stationary point of $f$ from random initialization.
\end{cor}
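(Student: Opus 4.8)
The plan is to obtain \Cref{cor:polynomial} as an immediate composition of the two results already in hand: \Cref{lem:h:exist}, which certifies that the explicitly constructed kernel $h$ meets all the structural requirements, and \Cref{thm:main}, which converts those requirements into the desired almost-sure second-order convergence. The corollary introduces no hypotheses beyond those of \Cref{lem:h:exist}, so essentially all I need to do is check that the two results dovetail exactly.

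First I would invoke \Cref{lem:h:exist} with the stated data: $f$ is a coercive, lower-bounded polynomial of degree $d$ (respectively $(d_1,d_2)$) with $d,d_1,d_2\ge 2$, and $h$ is taken from \eqref{eqn:h:exist} for some fixed $\alpha,\sigma>0$. The lemma then guarantees that $(f,h)$ satisfies Assumptions~\ref{assump:f}--\ref{assump:h}: namely $f\in\calC^2$ is coercive, lower-bounded, and KL; $h\in\calC^2$ is strongly (bi) convex and (bi) super-coercive; and $f$ is relatively (bi) smooth with respect to $h$ for some finite constant. Second, I would apply \Cref{thm:main}, whose hypotheses are verbatim Assumptions~\ref{assump:f}--\ref{assump:h}. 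This yields that each of B-GD, B-PPM, B-PALM, and B-PAM, run with this $h$ and any admissible step size, converges almost surely to a second-order stationary point of $f$ from a random initialization, which is exactly the claim.

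Consequently the corollary itself presents no obstacle; the genuine work is upstream in \Cref{lem:h:exist}, and the step I expect to dominate that argument is the relative-smoothness bound \eqref{eqn:general:lipschitz}, i.e.\ showing $L_f\nabla^2 h(\x)\pm\nabla^2 f(\x)\succeq 0$ for all $\x$ with a finite $L_f$. The driving idea is degree matching. For a $d$th-degree polynomial the entries of $\nabla^2 f(\x)$ are polynomials of degree at most $d-2$, so $\|\nabla^2 f(\x)\|\le C(1+\|\x\|_2^{d-2})$; meanwhile the leading term $\frac{\alpha}{d}\|\x\|_2^{d}$ of $h$ contributes a Hessian whose smallest eigenvalue is of order $\alpha\|\x\|_2^{d-2}$, and the term $\frac{\sigma}{2}\|\x\|_2^2$ adds a uniform $\sigma\eye$ that secures strong convexity, giving $\nabla^2 h(\x)\succeq(\sigma+\alpha\|\x\|_2^{d-2})\eye$. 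Comparing the two estimates yields a finite ratio uniform in $\x$, so one may take $L_f=C/\min\{\sigma,\alpha\}$, which can moreover be shrunk by enlarging $\alpha$. Super-coercivity follows since $h(\x)/\|\x\|_2\sim\frac{\alpha}{d}\|\x\|_2^{d-1}\to\infty$ when $d\ge 2$, and the constant $+1$ keeps each factor of the product $h(\x,\y)$ strictly positive, which is precisely what lets the one-variable estimates transfer block-by-block to the bi-variable case and delivers bi-strong-convexity and bi-super-coercivity.
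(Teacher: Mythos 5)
Your proposal is correct and matches the paper exactly: the corollary is obtained there as an immediate combination of \Cref{lem:h:exist} (which certifies Assumptions~\ref{assump:f}--\ref{assump:h} for the pair $(f,h)$) with \Cref{thm:main}. Your supplementary sketch of the degree-matching argument for relative smoothness also mirrors the paper's appendix proof of \Cref{lem:h:exist}, including the bounds $\|\nabla^2 f(\x)\|\le C(1+\|\x\|_2^{d-2})$ and $\nabla^2 h(\x)\succeq(\alpha\|\x\|_2^{d-2}+\sigma)\eye$.
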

Due to the requirement of the Lipschitz smoothness condition, the current theory for most traditional first-order (or even second-order) and alternating minimization algorithms cannot accommodate high-degree (larger than 2) polynomial objective functions, which sets demanding restrictions on the applications and consequently excludes most practical applications related with the matrix factorizations \cite{ge2017no, chi2018nonconvex}, which generally involve fourth-degree polynomial objective functions. \Cref{cor:polynomial} solves this problem by stating that the proposed Bregman-divergence based algorithms can be used to obtain a second-order stationary point of these problems. Further, many popular (nonconvex) machine learning and signal processing problems allow for all second-order stationary points to be globally optimal, which implies the global optimality of the proposed  Bregman-divergence based algorithms in solving a particular class of problems.

\subsection{Non-polynomial Objective Functions}

\begin{lem}\label{lem:h:exist:2}
Suppose $f\in\calC^2$ with the spectral norms of its (partial) Hessians 
\[\|\nabla^2f(\x)\|\le C_{1}+C_{2}\|\x\|_{2}^{d-2}\]
or (for the bi-variable case)
\begin{align*}
\|\nabla^{2}_{\x\x}f(\x,\y)\|&\le \left(C_{1}+C_{2}\|\x\|_{2}^{d_1-2}\right)\left(C_3+C_4\|\y\|_2^{d_2}\right),\\
\|\nabla^{2}_{\y\y}f(\x,\y)\|&\le \left(C_5+C_6\|\x\|_2^{d_1}\right)\left(C_{7}+C_{8}\|\y\|_{2}^{d_2-2}\right)
\end{align*}
for some integers  $d,d_1,d_2\ge 2$ and positive constants $C_{1}$ to $C_8$.
Then $f$ is relatively (bi) smooth w.r.t. $h$ defined in \eqref{eqn:h:exist}.
\end{lem}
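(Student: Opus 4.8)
The plan is to reduce the relative (bi) smoothness condition to a scalar inequality between the spectral norm of the (partial) Hessian of $f$ and the smallest eigenvalue of the corresponding (partial) Hessian of $h$, and then verify that inequality using the explicit form of $h$ in \eqref{eqn:h:exist}. First I would record the elementary but crucial observation that a sufficient condition for the relative smoothness inequality $L_{f}\nabla^{2}h(\x)\pm\nabla^{2}f(\x)\succeq 0$ of \Cref{defi:adative:lipschitz} is
\[
\|\nabla^{2}f(\x)\|\le L_{f}\,\lambda_{\min}\!\big(\nabla^{2}h(\x)\big),
\]
since for any $\vct{v}$ one has $|\vct{v}^{\top}\nabla^{2}f(\x)\vct{v}|\le\|\nabla^{2}f(\x)\|\,\|\vct{v}\|_{2}^{2}\le L_{f}\lambda_{\min}(\nabla^{2}h(\x))\|\vct{v}\|_{2}^{2}\le L_{f}\,\vct{v}^{\top}\nabla^{2}h(\x)\vct{v}$. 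This turns the matrix inequality into a scalar comparison involving only norms of $\x$.

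Next I would compute the Hessian of $h(\x)=\frac{\alpha}{d}\|\x\|_{2}^{d}+\frac{\sigma}{2}\|\x\|_{2}^{2}+1$. Using $\nabla\|\x\|_{2}^{d}=d\|\x\|_{2}^{d-2}\x$, one obtains for $\x\neq\vzero$
\[
\nabla^{2}h(\x)=\alpha(d-2)\|\x\|_{2}^{d-4}\x\x^{\top}+\big(\alpha\|\x\|_{2}^{d-2}+\sigma\big)\eye.
\]
Since $d\ge 2$ and $\alpha>0$, the rank-one term $\alpha(d-2)\|\x\|_{2}^{d-4}\x\x^{\top}$ is positive semidefinite and hence only increases eigenvalues, so $\lambda_{\min}(\nabla^{2}h(\x))=\alpha\|\x\|_{2}^{d-2}+\sigma$. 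Plugging this into the scalar comparison, the single-variable claim reduces to finding $L_{f}$ with $C_{1}+C_{2}\|\x\|_{2}^{d-2}\le L_{f}\big(\alpha\|\x\|_{2}^{d-2}+\sigma\big)$ for all $\x$, which holds by matching the constant and the $\|\x\|_{2}^{d-2}$ terms separately, e.g. $L_{f}=\max\{C_{1}/\sigma,\,C_{2}/\alpha\}$.

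For the bi-variable case I would write $h(\x,\y)=g(\x)p(\y)$ with $g(\x)=\frac{\alpha}{d_{1}}\|\x\|_{2}^{d_{1}}+\frac{\sigma}{2}\|\x\|_{2}^{2}+1$ and the analogous $p(\y)$, so that $\nabla^{2}_{\x\x}h(\x,\y)=p(\y)\nabla^{2}g(\x)$ and, by the previous step, $\lambda_{\min}(\nabla^{2}_{\x\x}h(\x,\y))=p(\y)\big(\alpha\|\x\|_{2}^{d_{1}-2}+\sigma\big)$. The product bound on $\|\nabla^{2}_{\x\x}f\|$ then matches the product structure of this eigenvalue: the factor $C_{1}+C_{2}\|\x\|_{2}^{d_{1}-2}$ is dominated by $\max\{C_{1}/\sigma,C_{2}/\alpha\}\big(\alpha\|\x\|_{2}^{d_{1}-2}+\sigma\big)$ exactly as above, while the cross factor $C_{3}+C_{4}\|\y\|_{2}^{d_{2}}$ is dominated by $(C_{3}+C_{4}d_{2}/\alpha)\,p(\y)$ using $p(\y)\ge 1$ and $p(\y)\ge\frac{\alpha}{d_{2}}\|\y\|_{2}^{d_{2}}$. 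Multiplying the two bounds produces a constant $L_{1}$ with $\|\nabla^{2}_{\x\x}f(\x,\y)\|\le L_{1}\lambda_{\min}(\nabla^{2}_{\x\x}h(\x,\y))$, which is the first inequality of \Cref{defi:adative:lipschitz:coordinate}; the symmetric argument yields $L_{2}$ for the $\y\y$ block. Strong bi-convexity of $h$ comes for free, since $\nabla^{2}_{\x\x}h\succeq\sigma p(\y)\eye\succeq\sigma\eye$ and likewise in $\y$.

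I expect the main obstacle to be the bi-variable bookkeeping: one must align the multiplicative structure of the Hessian bound with the product form of $h$, and in particular verify that the cross factor ($C_{3}+C_{4}\|\y\|_{2}^{d_{2}}$ in the $\x\x$ block, and its analog in the $\y\y$ block) is dominated, up to a constant, by the other-variable factor $p(\y)$ (resp. $g(\x)$) of $h$. A minor technical point to address is the behavior at $\x=\vzero$ (or $\y=\vzero$), where the rank-one term must be read by continuity; this does not affect the eigenvalue bound because the discarded term is positive semidefinite.
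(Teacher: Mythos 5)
Your proposal is correct and follows essentially the same route as the paper: lower-bound $\nabla^{2}h$ (resp.\ the partial Hessians $\nabla^{2}_{\x\x}h$, $\nabla^{2}_{\y\y}h$) by $(\alpha\|\x\|_{2}^{d-2}+\sigma)\eye$ times the other-variable factor, compare termwise with the assumed polynomial bound on $\|\nabla^{2}f\|$, and read off $L_f$ (resp.\ $L_1,L_2$) by matching the constant and leading-power coefficients. The only cosmetic difference is that you make the reduction to the scalar inequality $\|\nabla^{2}f\|\le L_f\lambda_{\min}(\nabla^{2}h)$ explicit, which the paper uses implicitly.
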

The proof of \Cref{lem:h:exist:2} is arranged in \Cref{proof:lem:h:exist:2}. 
\begin{cor}
\label{cor:nonpolynomial}
Suppose $f\in\mathcal{C}^{2}$ is any coercive and lower-bounded KL function with its Hessian (or partial Hessian) spectral norms upper bounded by a polynomial as in \Cref{lem:h:exist:2}. Set  $h$ according to \eqref{eqn:h:exist}.
Then B-GD, B-PPM, B-PALM, and B-PAM almost surely converge to a second-order stationary point of $f$ from random initialization.
\end{cor}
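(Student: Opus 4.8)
The plan is to reduce Corollary~\ref{cor:nonpolynomial} to a direct application of Theorem~\ref{thm:main}, so that the only genuine work is to confirm the two standing hypotheses hold under the assumptions stated here. Since Theorem~\ref{thm:main} already delivers almost-sure convergence to a second-order stationary point once Assumptions~\ref{assump:f}--\ref{assump:h} are in force, verifying those two assumptions is sufficient to close the argument.

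First I would dispatch Assumption~\ref{assump:f}. This is immediate: the corollary hypothesizes exactly that $f\in\mathcal{C}^2$ is coercive, lower-bounded, and a KL function, which is verbatim the content of Assumption~\ref{assump:f}. No further argument is required.

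Next I would verify Assumption~\ref{assump:h}, namely that $f$ is relatively (bi) smooth with respect to some strongly (bi) convex and (bi) super-coercive $h\in\calC^2$. I take $h$ to be the function in \eqref{eqn:h:exist}. Relative (bi) smoothness of $f$ with respect to this particular $h$ is precisely the conclusion of Lemma~\ref{lem:h:exist:2}, whose hypothesis---the polynomial upper bound on the (partial) Hessian spectral norms---is assumed in the corollary. It then remains to check the structural properties of $h$ itself. In the single-variable case, $\nabla^2 h(\x) = \nabla^2\!\left(\frac{\alpha}{d}\|\x\|_2^d\right) + \sigma\eye \succeq \sigma\eye$ because the first summand is convex, so $h$ is $\sigma$-strongly convex; and since $h(\x)\ge \frac{\sigma}{2}\|\x\|_2^2$, we get $h(\x)/\|\x\|_2 \ge \frac{\sigma}{2}\|\x\|_2 \to \infty$, so $h$ is super-coercive. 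In the bi-variable case, fixing $\y$ leaves a strictly positive multiple of a single-variable $h$ of the same form, so $h(\cdot,\y)$ inherits strong convexity and super-coercivity, and symmetrically in $\x$; hence $h$ is strongly bi-convex and bi-super-coercive. These are exactly the elementary computations already carried out in the proof of Lemma~\ref{lem:h:exist}, which uses the same $h$, so I would simply cite them rather than redo them.

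With both assumptions in hand, Theorem~\ref{thm:main} applies directly and yields the claimed almost-sure convergence of B-GD, B-PPM, B-PALM, and B-PAM to a second-order stationary point from random initialization. The corollary therefore carries no substantial obstacle of its own: all the analytic difficulty is isolated upstream, in Lemma~\ref{lem:h:exist:2} (translating the polynomial Hessian bound into relative smoothness) and in Theorem~\ref{thm:main} (the saddle-avoidance and convergence machinery), both of which are available. The one place that demands a moment of care is confirming that the $+1$ offset in \eqref{eqn:h:exist} keeps the cross-factor of the bi-variable $h$ uniformly bounded away from $0$, so that the per-coordinate strong-convexity modulus does not degenerate and the bi-super-coercivity survives when one variable is held fixed.
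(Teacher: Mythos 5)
Your proposal is correct and follows exactly the route the paper intends: the corollary is obtained by combining Lemma~\ref{lem:h:exist:2} (which supplies the relative (bi) smoothness half of Assumption~\ref{assump:h}) with the strong (bi) convexity and (bi) super-coercivity of $h$ already verified in the proof of Lemma~\ref{lem:h:exist}, and then invoking Theorem~\ref{thm:main}. Your added remark about the $+1$ offset keeping the cross-factor bounded away from zero in the bi-variable case is a correct and worthwhile point of care, consistent with the bounds \eqref{partial:hxx}--\eqref{partial:hyy} in the paper.
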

\Cref{cor:nonpolynomial} is important in dealing with those optimization problems with non-polynomial objective functions. It provides the second-order convergence guarantees for these problems as long as the spectral norm of their Hessian matrix in a polynomial order. 

\subsection{Global Optimality in Low-rank Matrix Recovery}\label{sec:BMF}
A natural way to solve large-scale matrix optimization problems is the  {\em Burer-Monteiro factorization method} (BMF) \cite{burer2003nonlinear, burer2005local}. Given a  general rank-constrained matrix optimization problem
\begin{align}
\minimize_{\mX\in\mathbb{S}_+^n~\tmop{or}~\mX\in\R^{n\times m}} q(\mX)~~\st ~ \rank(\mX)\le r
\label{eqn:BMF0}
\end{align}
the BMF method re-parameterizes the problem by setting $\mX=\mU\mU^\top$ (for symmetric case) or $\mX=\mU\mV^\top$ (for nonsymmetric case) and then focuses on the new BMF problems:
\begin{align}\minimize_{\mU\in\R^{n\times r}}f(\mU):=q(\mU\mU^\top)\quad\tmop{and}\quad
\minimize_{\mU\in\R^{n\times r},\mV\in\R^{m\times r}}f(\mU,\mV):=q(\mU\mV^\top)
\label{eqn:BMF1}    
\end{align}
Then a direct consequence of \Cref{cor:polynomial} is the second-order convergence of the Bregman-divergence based methods in solving the BMF problems \eqref{eqn:BMF1}.
\begin{cor}\label{cor:BMF}
Assume $f$ defined in \eqref{eqn:BMF1} is a coercive, lower-bounded and finite-degree polynomial function. Set $h$ according to \eqref{eqn:h:exist}. Then  B-GD, B-PPM, B-PALM, and B-PAM are guaranteed to almost surely converge to a globally optimal solution of $f$ from random initialization.
\end{cor}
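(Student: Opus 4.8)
The plan is to chain the paper's own second-order convergence result with the benign-landscape theory for Burer--Monteiro factorizations available in the literature. Concretely, I would argue in two steps: (i) the four Bregman algorithms converge almost surely to a second-order stationary point of $f$, and (ii) for the factored objectives in \eqref{eqn:BMF1}, every second-order stationary point is globally optimal; composing these gives the claim.

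For step (i), observe that the hypotheses of \Cref{cor:polynomial} hold by assumption: $f$ is taken to be coercive, lower-bounded, and a finite-degree polynomial (the last being automatic, since $\mU\mapsto q(\mU\mU^\top)$ and $(\mU,\mV)\mapsto q(\mU\mV^\top)$ compose the polynomial $q$ with a polynomial factorization map). Choosing $h$ as in \eqref{eqn:h:exist} and applying \Cref{cor:polynomial} directly yields that B-GD, B-PPM, B-PALM, and B-PAM each converge almost surely to a second-order stationary point of $f$ from random initialization.

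For step (ii), I would invoke the established global-optimality characterizations for low-rank matrix recovery, e.g., \cite{ge2017no,li2018non,chi2018nonconvex,zhu2018global}: under the relevant structural conditions on $q$ and the rank parameter $r$, every second-order stationary point of the factored problem \eqref{eqn:BMF1} is a globally optimal solution. Substituting the second-order stationary limit from step (i) into this characterization completes the argument.

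The main obstacle is not the convergence itself---step (i) is essentially a verbatim instance of \Cref{cor:polynomial}---but in making step (ii) precise. The benign-landscape property fails for arbitrary $q$; it requires hypotheses such as (restricted) strong convexity of $q$ together with a suitably chosen rank $r$. Hence the delicate point is to pin down the class of $q$ for which the cited landscape theorems apply and to verify that the polynomial $f$ at hand falls into that class, rather than to reprove any landscape result from scratch.
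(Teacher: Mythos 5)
Your proposal matches the paper's own argument: \Cref{cor:BMF} is obtained there exactly as you describe, by invoking \Cref{cor:polynomial} for almost-sure second-order convergence and then appealing to the landscape results of \cite{li2018non,ge2017no} asserting that every second-order stationary point of the factored problem \eqref{eqn:BMF1} is globally optimal. Your caveat about step (ii) is well taken --- the paper likewise relegates the required ``well-behavedness'' of $q$ (e.g., a well-conditioned Hessian on low-rank matrices) to the discussion following the corollary rather than stating it as an explicit hypothesis --- so your reading of where the real work lies is accurate.
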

The BMF method becomes increasingly popular in recent years in solving large-size matrix optimization problems. This is not only due to its high computational efficiency, but also because of the recent breakthroughs \cite{li2018non,ge2017no} in connecting the globally optimal solutions of the original objective function $q$ in \eqref{eqn:BMF0} and the second-order stationary points of the reformulated BMF objective function $f$ in \eqref{eqn:BMF1}. It has been already proved that when the original matrix function is well-behaved (e.g., the condition number of the Hessian matrix is well-controlled when evaluated on the low-rank matrices \cite{li2018non,ge2017no}), then every second-order stationary point of the BMF objective function $f$ corresponds to a global optimal solution of the original objective function $q$.
Therefore, in this sense, the second-order convergence of the Bregman-divergence based methods means the global optimality for solving a particular class of matrix problems, including but not limited to matrix PCA, matrix sensing, matrix completion problems.

\section{Convergence Analysis}\label{app:proof:main}

\subsection{Main Ingredient for First-order Convergence Analysis}\label{sec:KL}

The KL property characterizes the local geometry of the objective function around the critical points, basically saying that the function landscape is not quite flat compared with the norm of the gradient evaluated around the critical points. Formally, it is defined as:
\begin{defi}\cite{attouch2010proximal,bolte2014proximal,bolte2018first}\label{def:KL}
    We say a proper semi-continuous function $f(\x)$ satisfies Kurdyka-Lojasiewicz (KL) property, if ${\x^\star}$ is a limiting critical point of $f(\x)$, then there exist $\delta>0,~\theta\in[0,1),~C>0,~s.t.$
    \[
    \left|f(\x) - f(\x^\star)\right|^{\theta} \leq C \|\nabla f(\vx)\|_2,~~\forall~\x\in \calB(\x^\star, \delta)
    \]
    
\end{defi}
We mention that the above KL property (also known as KL inequality) states the regularity of $h(\vu)$ around its critical point $\vu$ and the KL inequality trivially holds at non-critical points.  A function satisfying the KL property is a KL function. A very large set of functions are KL functions: as stated in \cite[Theorem 5.1]{bolte2014proximal},  for a proper lower semi-continuous function, it has KL property once it is semi-algebraic.  And the semi-algebraic property of sets and functions is sufficiently general,  including but never limited to any polynomials, any norm, quasi-norm, $\ell_0$ norm, smooth manifold, etc. For more discussions and examples, see \cite{attouch2010proximal,bolte2014proximal}.

The KL property plays a crucial role in establishing the first-order convergence (a.k.a. sequence convergence) for a number of descent type algorithms (see, e.g., \cite{attouch2010proximal,bolte2014proximal,bolte2018first}). It has been shown that as long as a generated sequence satisfies the following {\bf (C1)} sufficient decrease property and {\bf (C2)} the bounded gradient property, then this sequence is guaranteed to converge to a first-order stationary point (a.k.a. critical point).

\begin{defi}[Definition 4.1, \cite{bolte2018first}]
\label{def:grad:seqence}
Let $f:\R^{n}\to\R$ be a continuous function. A sequence $\{\x_{k}\}_{k\in\N}$ is called a gradient-like descent sequence for $f$ if the following two conditions hold
for some positive constants $\rho_1,~\rho_2$:

\begin{description}
\item\textbf{\em (C1) Sufficient decrease property}: $ f(\x_{k}) -f(\x_{k+1})\ge \rho_{1} \left\| \x_{k+1}-\x_{k} \right\|_{2}^{2},~ \forall~k\in\N$;

\item\textbf{\em (C2) Bounded gradient property}: $\left\| \nabla f(\x_{k+1}) \right\|_2 \le  \rho_{2}  \left\| \x_{k+1}-\x_{k} \right\|_{2}, ~ \forall~k\in\N$.
\end{description}
\end{defi}

\begin{theorem}[Theorem 6.2, \cite{bolte2018first}]\label{thm:KL}
Let $f:\R^{n}\to\R$ be any continuous KL function and $\nabla$ be the gradient operator. Let $\{\x_{k}\}_{k\in\N}$ be a bounded gradient-like descent sequence for $f$. Then the sequence $\{\x_{k}\}_{k\in\N}$ converges to a critical point of $f$.
\end{theorem}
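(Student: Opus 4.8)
The plan is to run the classical three-stage Łojasiewicz argument behind results of this type (Attouch--Bolte--Svaiter style): first squeeze out the easy consequences of sufficient decrease, then identify a subsequential limit as a critical point, and finally deploy the KL inequality to upgrade subsequential convergence to convergence of the whole sequence by proving that the sequence has \emph{finite length}. The subtlety will be entirely in the last stage, where the KL inequality is only local.

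First I would exploit \textbf{(C1)} of \Cref{def:grad:seqence}. The values $f(\x_k)$ are monotonically nonincreasing; since $\{\x_k\}$ is bounded and $f$ is continuous, they are bounded below and converge to some $f^\star$, and telescoping the decrease inequality gives $\sum_k \|\x_{k+1}-\x_k\|_2^2 \le (f(\x_0)-f^\star)/\rho_1 < \infty$, so in particular $\|\x_{k+1}-\x_k\|_2 \to 0$. Boundedness then yields a subsequence $\x_{k_j}\to\x^\star$; continuity gives $f(\x^\star)=f^\star$, and combining $\|\x_{k_j}-\x_{k_j-1}\|_2\to 0$ with \textbf{(C2)} and continuity of $\nabla f$ shows $\nabla f(\x^\star)=0$, i.e. $\x^\star$ is critical. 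What remains, and is the real content, is to show the entire sequence converges (necessarily to this $\x^\star$): square-summability of the increments alone does \emph{not} imply convergence.

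Next I would invoke the KL inequality at $\x^\star$ from \Cref{def:KL}. Writing $r_k := f(\x_k)-f^\star \ge 0$, I introduce the concave desingularizing function $\varphi(s) = \frac{C}{1-\theta}s^{1-\theta}$ (well defined as $\theta\in[0,1)$), so $\varphi'(s)=Cs^{-\theta}$. Whenever $\x_k$ lies in the KL ball $\calB(\x^\star,\delta)$, the bound $r_k^{\theta}\le C\|\nabla f(\x_k)\|_2$ gives $\varphi'(r_k)\ge 1/\|\nabla f(\x_k)\|_2$, while concavity gives $\varphi(r_k)-\varphi(r_{k+1}) \ge \varphi'(r_k)(r_k-r_{k+1})$. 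Chaining these with \textbf{(C1)} and \textbf{(C2)} produces the one-step estimate
\[
\varphi(r_k)-\varphi(r_{k+1}) \ge \frac{\rho_1\|\x_{k+1}-\x_k\|_2^2}{\rho_2\,\|\x_k-\x_{k-1}\|_2},
\]
and the AM--GM inequality then converts this into the telescoping-friendly bound $2\|\x_{k+1}-\x_k\|_2 \le \|\x_k-\x_{k-1}\|_2 + \frac{\rho_2}{\rho_1}\big(\varphi(r_k)-\varphi(r_{k+1})\big)$.

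Finally, summing this bound over a range of indices makes the $\|\x_k-\x_{k-1}\|_2$ terms cancel telescopically against the left-hand side while the $\varphi$ terms also telescope, yielding a uniform bound $\sum_k \|\x_{k+1}-\x_k\|_2 < \infty$; the sequence is thus Cauchy and converges to $\x^\star$. The main obstacle is that the one-step estimate is valid only while $\x_k \in \calB(\x^\star,\delta)$, so I must first guarantee that the tail of the sequence never escapes this ball. I would close this by a bootstrapping induction: using the convergent subsequence and $r_k\to 0$, pick an index $K$ with $\x_K$ close to $\x^\star$ and $\frac{\rho_2}{\rho_1}\varphi(r_K)$ small, and show inductively that the accumulated path length $\sum_{k\ge K}\|\x_{k+1}-\x_k\|_2$ stays controlled by $\|\x_K-\x_{K-1}\|_2 + \frac{\rho_2}{\rho_1}\varphi(r_K)$, which keeps every later iterate inside $\calB(\x^\star,\delta)$ and so legitimizes applying the KL inequality at each step. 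Closing this localization loop is the delicate part; once it holds, the finite-length conclusion and hence convergence to the critical point $\x^\star$ are immediate.
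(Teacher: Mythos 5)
The paper does not actually prove this statement---it is imported verbatim as Theorem 6.2 of \cite{bolte2018first}---so there is no in-paper proof to compare against. Your three-stage argument (sufficient decrease $\Rightarrow$ square-summable increments and a critical accumulation point $\x^\star$; the KL inequality with the desingularizer $\varphi(s)=\frac{C}{1-\theta}s^{1-\theta}$; AM--GM, telescoping, and the localization bootstrap to obtain finite length $\sum_k\|\x_{k+1}-\x_k\|_2<\infty$) is precisely the standard Attouch--Bolte--Svaiter proof on which the cited result rests, and it is correct. The only loose end is the degenerate case $r_k=0$ for some finite $k$, where $\varphi'(r_k)$ is undefined when $\theta>0$: there monotonicity of $f(\x_k)$ together with \textbf{(C1)} forces the sequence to be eventually constant, so convergence is immediate; with that one-line remark your argument is complete.
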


\subsection{Main Ingredient for Second-order Convergence Analysis}

\begin{defi} Let $f$ be a twice continuously differentiable function. Then
\begin{enumerate}
\item $\x$ is a first-order stationary point (a.k.a. critical point) of  $f$ if $\nabla f(\x)=\vzero$;
\item $\x$ is a second-order stationary point of  $f$ if $\nabla f(\x)=\vzero$ and $\nabla^{2}f(\x)\succeq 0$;
\item  $\x$ is a strict saddle of $f$ if it is a first-order but not a second-order stationary point.
\end{enumerate}
\label{def:critical:point}
\end{defi} 

\begin{defi}[Unstable Fixed Points] \label{def:unstable:fixed:point} 
Let $g:\R^n\to\R^n$ be a continuously differentiable mapping.
Then the unstable fixed point of $g$ is defined as any fixed point of $g$ with $\tmop{Sp}(Dg(\x^\star))>1$, where $\tmop{Sp}(\cdot)$ denotes the spectral radius  (i.e., the largest magnitude eigenvalue) and $D$ denotes the Jacobian operator.
\end{defi}
The seminal work \cite{lee2017first} establishes that certain iterative algorithms can avoid strict saddle points by viewing the iterative algorithm as a dynamic system and proving that any strict saddle point of the objective function is an unstable fixed point of the dynamic system. Then by the stable manifold theorem \cite{shub2013global},  the event for this algorithm (with a random initialization) to converge to a strict saddle has a zero probability. This is summarized in the following result.
\begin{theorem}[Theorem 2, \cite{lee2017first}]\label{thm:jason}
Let $g:\R^n\to\R^n$ be a continuously differentiable mapping.
Suppose $\det(D g(\x))\neq 0$ in the entire domain. Then the set of initial points that converge to the unstable fixed points of $g$ is of zero Lebesgue-measure.
\end{theorem}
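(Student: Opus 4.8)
The plan is to prove the statement with two classical tools from smooth dynamical systems: the Inverse Function Theorem and the Center--Stable Manifold Theorem (as in \cite{shub2013global}). Write $g^k$ for the $k$-fold composition and let $\set{W}$ denote the set of initial points whose forward orbit converges to some unstable fixed point of $g$; the goal is $\mu(\set{W})=0$ for Lebesgue measure $\mu$. The strategy has two movements. First I would \emph{localize}: near each unstable fixed point, the set of orbits that remain nearby is trapped on a low-dimensional manifold and is therefore $\mu$-null. Then I would \emph{globalize}: every convergent orbit eventually enters such a neighborhood, so $\set{W}$ is a countable union of preimages of these null sets under iterates of $g$. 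The hypothesis enters twice: since $g\in\calC^1$ with $\det(Dg(\x))\neq 0$ everywhere, the Inverse Function Theorem makes $g$ a local diffeomorphism, which (a) lets the stable-manifold machinery for $C^1$ diffeomorphisms apply at each fixed point, and (b) guarantees that for any $\mu$-null set $S$ the preimage $g^{-1}(S)$ is again $\mu$-null (cover the domain by countably many charts on which $g$ is a diffeomorphism onto its image; on each, $(g|_U)^{-1}$ is $C^1$, hence locally Lipschitz, and maps null sets to null sets).

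For the local step, let $\x^\star$ be any unstable fixed point, so $\Sp(Dg(\x^\star))>1$, meaning $Dg(\x^\star)$ has an eigenvalue of modulus exceeding $1$ and the unstable subspace $E^u$ is nontrivial. Applying the Center--Stable Manifold Theorem at $\x^\star$ yields an open neighborhood $B_{\x^\star}$ and an embedded disk $W^{cs}(\x^\star)$ of dimension $n-\dim E^u<n$ with the characterizing property that \emph{any} point whose entire forward orbit stays in $B_{\x^\star}$ must lie on $W^{cs}(\x^\star)$. Having dimension strictly below $n$, this disk is $\mu$-null.

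For globalization, the family $\{B_{\x^\star}\}$ ranging over all unstable fixed points is an open cover of the set of such points; by the Lindel\"of property of $\R^n$ I extract a countable subcover $\{B_i\}_{i\in\N}$ with associated null disks $\{W^{cs}_i\}$. If $\x_0\in\set{W}$, then $g^k(\x_0)\to\x^\star$ for some unstable fixed point $\x^\star\in B_i$; since $B_i$ is open, there is $K$ with $g^k(\x_0)\in B_i$ for all $k\ge K$, so the forward orbit of $g^K(\x_0)$ stays in $B_i$, which forces $g^K(\x_0)\in W^{cs}_i$ and hence $\x_0\in g^{-K}(W^{cs}_i)$. Therefore
\[
\set{W}\subseteq \bigcup_{i\in\N}\bigcup_{K\in\N} g^{-K}\big(W^{cs}_i\big),
\]
a countable union which, by the preimage property above, is a countable union of $\mu$-null sets, so $\mu(\set{W})=0$.

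I expect the main obstacle to be the local step: invoking the Center--Stable Manifold Theorem to certify that the forward-trapped orbits lie in a submanifold of dimension $<n$ is the genuinely nontrivial input, and it is precisely where $\Sp(Dg(\x^\star))>1$ is used. The remaining subtlety is in the globalization bookkeeping: one must cover a possibly non-discrete set of fixed points by countably many neighborhoods (Lindel\"of) and use that the ``orbit stays in $B_i$'' characterization is insensitive to \emph{which} fixed point the orbit limits to, so that membership in the null disk $W^{cs}_i$ still follows even when $\x^\star$ is not the center of $B_i$.
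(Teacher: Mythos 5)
Your proof is correct and takes essentially the same route as the paper, which does not reprove this result but imports it from \cite{lee2017first} and sketches only that it follows from the stable manifold theorem of \cite{shub2013global}. Your reconstruction---local center--stable manifolds of dimension $<n$ at each unstable fixed point, a Lindel\"of countable subcover, and preservation of null sets under preimages of the local diffeomorphism guaranteed by $\det(Dg)\neq 0$---is precisely the argument of the cited source, including the correct handling of the subtlety that the trapped-orbit characterization in $B_i$ does not depend on which fixed point the orbit actually converges to.
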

This implies that as long as the algorithm determined by $g$ uses a random initialization and converges to a critical point,  then this critical point would be a second-order stationary point of $f$.

\subsection{Convergence Analysis of B-GD}

\subsubsection{First-order Convergence of B-GD}
\begin{theorem}
Under Assumptions  \ref{assump:f}--\ref{assump:h}, B-GD  converges to a critical point of $f$.
\end{theorem}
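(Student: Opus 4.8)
The plan is to verify that the sequence $\{\x_k\}$ generated by B-GD is a \emph{bounded gradient-like descent sequence} in the sense of \Cref{def:grad:seqence}, and then invoke \Cref{thm:KL} to conclude convergence to a critical point. Concretely, three facts must be established: (i) the iterates stay in a fixed compact set; (ii) the sufficient decrease property \textbf{(C1)}; and (iii) the bounded gradient property \textbf{(C2)}. The first-order convergence then follows immediately.

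For boundedness, recall that \Cref{lem:general:decrease} already guarantees each B-GD step is well-defined and yields the decrease \eqref{eqn:general:decrease1}. Since $\eta\in(0,1/L_f)$, the coefficient $(\frac{1}{\eta}-L_f)\frac{\sigma}{2}$ is strictly positive, so $\{f(\x_k)\}$ is nonincreasing and hence $f(\x_k)\le f(\x_0)$ for every $k$. By \Cref{assump:f}, $f$ is coercive and continuous, so the sublevel set $\calB_{\x_0}=\{\x: f(\x)\le f(\x_0)\}$ is compact, and the whole sequence lies in $\calB_{\x_0}$. Because $f,h\in\calC^2$, their Hessians are continuous and therefore bounded on $\calB_{\x_0}$; denote by $L_f^{\mathrm{loc}}$ and $L_h^{\mathrm{loc}}$ the resulting local Lipschitz constants of $\nabla f$ and $\nabla h$ on $\calB_{\x_0}$. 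This confinement to a compact set is precisely what lets us substitute the missing global Lipschitz smoothness by local constants.

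Property \textbf{(C1)} is then immediate from \eqref{eqn:general:decrease1} with $\rho_1=(\frac{1}{\eta}-L_f)\frac{\sigma}{2}>0$. For \textbf{(C2)}, the main step, I would use the first-order optimality condition of the B-GD subproblem \eqref{eqn:bgd1}. Since $\nabla_{\x} D_h(\x,\x_{k-1})=\nabla h(\x)-\nabla h(\x_{k-1})$, stationarity at $\x_k$ reads $\nabla f(\x_{k-1})+\frac{1}{\eta}(\nabla h(\x_k)-\nabla h(\x_{k-1}))=\vzero$, that is, $\nabla f(\x_{k-1})=\frac{1}{\eta}(\nabla h(\x_{k-1})-\nabla h(\x_k))$. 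Adding and subtracting $\nabla f(\x_{k-1})$ and applying the triangle inequality gives
\begin{align*}
\|\nabla f(\x_k)\|_2
&\le \|\nabla f(\x_k)-\nabla f(\x_{k-1})\|_2+\frac{1}{\eta}\|\nabla h(\x_k)-\nabla h(\x_{k-1})\|_2\\
&\le \left(L_f^{\mathrm{loc}}+\frac{L_h^{\mathrm{loc}}}{\eta}\right)\|\x_k-\x_{k-1}\|_2,
\end{align*}
where both terms are controlled by the local Lipschitz constants from the previous paragraph, since $\x_k,\x_{k-1}\in\calB_{\x_0}$. This is exactly \textbf{(C2)} with $\rho_2=L_f^{\mathrm{loc}}+L_h^{\mathrm{loc}}/\eta$.

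With (i)--(iii) in hand, $\{\x_k\}$ is a bounded gradient-like descent sequence for the continuous KL function $f$ (\Cref{assump:f}), so \Cref{thm:KL} applies and the sequence converges to a critical point of $f$. I expect the main obstacle to be property \textbf{(C2)}: without global Lipschitz smoothness one cannot bound $\|\nabla f(\x_k)\|_2$ directly, and the argument hinges on two ingredients working together---first, the optimality condition transferring the gradient of $f$ onto a difference of $\nabla h$; and second, the coercivity-driven confinement of the iterates to a compact sublevel set on which both $\nabla f$ and $\nabla h$ are Lipschitz.
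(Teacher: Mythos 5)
Your proposal is correct and follows essentially the same route as the paper: sufficient decrease from \Cref{lem:general:decrease} gives \textbf{(C1)} and confines the iterates to the compact sublevel set $\{\x: f(\x)\le f(\x_0)\}$, and the optimality condition $\nabla f(\x_{k-1})=\frac{1}{\eta}(\nabla h(\x_{k-1})-\nabla h(\x_k))$ combined with local Lipschitz constants of $\nabla f$ and $\nabla h$ on that set yields \textbf{(C2)}, after which \Cref{thm:KL} applies. The only difference is cosmetic (an index shift and your naming of the local constants versus the paper's $\rho_f(\calB_0)$, $\rho_h(\calB_0)$).
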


\begin{proof}
First,  B-GD is well-defined in view of \Cref{lem:general:decrease}.
Then in view of \Cref{thm:KL} and the assumption that $f$ is KL function, it is sufficient to prove that $\{\x_{k}\}_{k\in\N}$ is a gradient-like descent sequence for $f$, i.e., to show conditions \textbf{(C1)} and \textbf{(C2)}.
Condition \textbf{(C1)} follows from \eqref{eqn:general:decrease1} in \Cref{lem:general:decrease}. Condition \textbf{(C2)}  holds because by the optimality condition
\begin{equation}
\begin{aligned}
&\nabla f(\x_{k})+
({\nabla h(\x_{k+1})-\nabla h(\x_{k})})/{\eta}=\zero\label{opt:bgd:1},
\\
\Rightarrow&\|\nabla f(\x_{k})\|_{2}=\frac{1}{\eta}\|\nabla h(\x_{k+1})-\nabla h(\x_{k})\|_{2}
\le \frac{\rho_{h}(\calB_0)}{\eta}\|\x_{k+1}-\x_{k}\|_{2},
\end{aligned}
\end{equation} 
where the inequality directly follows by combining the sufficient decrease property of each iteration, the coercivity of $f$, and the twice differentiability of $h$.\footnote{
First, the sufficient decrease property of each iteration  ensures all iterates $\{\x_k\}$ live in the sub-level set $\calB_0:=\{\x: f(\x)\le f(\x_0)\}$, which is guaranteed to be  a bounded set by the coercivity of $f$ (cf. \cite[Prop. 11.11]{bauschke2011convex}); Second, given any twicely continuous function $h\in\calC^2$ (or $f\in\calC^2$),  $\|\nabla^2h\|$ (or $\|\nabla^2f\|$) is a continuous function and must have a maximum over the closure of $\calB_0$, for convenience denoted by $\rho_h(\calB_0)$ (or $\rho_f(\calB_0)$).\label{footnote:local}}
Similarly,   
\begin{equation}
\begin{aligned}
&\|\nabla f(\x_{k+1})\|_{2}=\|\nabla f(\x_{k})-\nabla f(\x_{k})+\nabla f(\x_{k+1})\|_{2}
\\
& \le \|\nabla f(\x_{k})\|_2+\|\nabla f(\x_{k})-\nabla f(\x_{k+1})\|_{2} \le( \frac{\rho_{h}(\calB_0)}{\eta} +\rho_{f}(\calB_0) )\|\x_{k+1}-\x_{k}\|_{2},    
\label{eqn:footnote}
\end{aligned}
\end{equation}
 where the last inequality follows by \eqref{opt:bgd:1} and \Cref{footnote:local}.
\end{proof}

\subsubsection{Second-order Convergence of B-GD}
\begin{theorem}  Under Assumptions  \ref{assump:f}--\ref{assump:h}, B-GD with random initialization almost surely converges to a second-order stationary point of $f$.
\end{theorem}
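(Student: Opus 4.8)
The plan is to realize B-GD as a dynamical system $\x_k = g(\x_{k-1})$ and then invoke the avoiding-saddle machinery of \Cref{thm:jason}. From the optimality condition \eqref{opt:bgd:1}, the B-GD update is equivalent to $\nabla h(\x_{k})=\nabla h(\x_{k-1})-\eta\nabla f(\x_{k-1})$, so the iteration is governed by the map $g(\x)=(\nabla h)^{-1}\big(\nabla h(\x)-\eta\nabla f(\x)\big)$. First I would verify that $g$ is a well-defined $\calC^1$ map on all of $\R^n$: since $h\in\calC^2$ is $\sigma$-strongly convex, $\nabla^2 h(\x)\succeq\sigma\eye\succ 0$ everywhere, so $\nabla h$ is strongly monotone (hence injective) and super-coercivity of $h$ makes it surjective; the inverse function theorem then gives a $\calC^1$ inverse $(\nabla h)^{-1}$ on $\R^n$. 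As $f\in\calC^2$, the composition $g$ is $\calC^1$, and its fixed points are exactly the critical points of $f$.

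Next I would compute the Jacobian by the chain rule, obtaining $Dg(\x)=[\nabla^2 h(g(\x))]^{-1}\,[\nabla^2 h(\x)-\eta\nabla^2 f(\x)]$. The first factor is always invertible, so establishing $\det(Dg(\x))\neq 0$ reduces to showing $\nabla^2 h(\x)-\eta\nabla^2 f(\x)$ is nonsingular. This is where relative smoothness enters: \eqref{eqn:general:lipschitz} yields $\nabla^2 f(\x)\preceq L_f\nabla^2 h(\x)$, hence $\nabla^2 h(\x)-\eta\nabla^2 f(\x)\succeq (1-\eta L_f)\nabla^2 h(\x)\succ 0$, using $\eta<1/L_f$ and $\nabla^2 h\succ 0$. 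Thus $\det(Dg(\x))\neq 0$ on the entire domain, so the hypothesis of \Cref{thm:jason} is met.

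The crucial step is showing that every strict saddle of $f$ is an unstable fixed point of $g$. At a critical point $\x^\star$ we have $g(\x^\star)=\x^\star$ and $Dg(\x^\star)=\eye-\eta\,[\nabla^2 h(\x^\star)]^{-1}\nabla^2 f(\x^\star)$. Writing $H=\nabla^2 h(\x^\star)\succ 0$ and $F=\nabla^2 f(\x^\star)$, the matrix $H^{-1}F$ is similar to the symmetric matrix $H^{-1/2}FH^{-1/2}$, hence has real eigenvalues, and by Sylvester's law of inertia $H^{-1/2}FH^{-1/2}$ shares the inertia of $F$. At a strict saddle $F$ has a negative eigenvalue, so $H^{-1}F$ has an eigenvalue $\lambda<0$, and correspondingly $Dg(\x^\star)$ has eigenvalue $1-\eta\lambda>1$. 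Therefore $\Sp(Dg(\x^\star))>1$ and $\x^\star$ is an unstable fixed point.

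Finally I would assemble the pieces. By \Cref{thm:jason}, the set of initializations whose B-GD trajectory converges to an unstable fixed point---in particular to any strict saddle---has zero Lebesgue measure, so under random (absolutely continuous) initialization this event has probability zero. Since the first-order convergence theorem already guarantees that B-GD converges to a critical point, almost surely that limit is a critical point which is not a strict saddle, i.e., a second-order stationary point. The main obstacle is the unstable-fixed-point step: because the Bregman preconditioner $[\nabla^2 h]^{-1}$ destroys the symmetry of $Dg(\x^\star)$, one cannot read off its eigenvalues directly and must pass through the similarity/congruence to $H^{-1/2}FH^{-1/2}$ and invoke Sylvester's law of inertia to convert the negative curvature of $f$ into an eigenvalue of $Dg(\x^\star)$ exceeding $1$.
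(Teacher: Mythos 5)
Your proposal is correct and follows essentially the same route as the paper: realize B-GD as the map $g$ with Jacobian $Dg(\x)=[\nabla^2 h(g(\x))]^{-1}(\nabla^2 h(\x)-\eta\nabla^2 f(\x))$, verify $\det(Dg)\neq 0$ from relative smoothness with $\eta<1/L_f$, and show strict saddles are unstable fixed points via the similarity to $\eye-\eta[\nabla^2 h]^{-1/2}\nabla^2 f[\nabla^2 h]^{-1/2}$ and the congruence/inertia argument, before invoking \Cref{thm:jason}. Your added detail on the global well-definedness and $\calC^1$ regularity of $(\nabla h)^{-1}$ (via strong monotonicity and super-coercivity) is a welcome elaboration of what the paper leaves implicit in its appeal to the implicit function theorem.
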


\begin{proof}
As we have proved the first-order convergence, to show the second-order convergence from the first-order convergence, it suffices to use \Cref{thm:jason} to show that B-GD  avoids strict saddles. 
For that purpose, we define \eqref{eqn:bgd1} as $\x_{k}=g(\x_{k-1})$ and compute the Jacobian $Dg$. By the definition of $g$, we get \[Dg(\x_{k})=\partial\x_{k+1}/\partial\x_{k}^\top.\]
 Then
we apply the implicit function theorem to the optimality condition \eqref{opt:bgd:1} and in view of  the non-singularity of $\nabla^2 h$, we obtain that $Dg$ is continuous and given by
\[D g(\x_{k})=\left[\nabla^2 h(\x_{k+1}) \right]^{-1}(\nabla^2 h(\x_{k})-\eta\nabla^2 f(\x_{k})).
\]
Since the above analysis holds for all $\x_{k}\in\R^n$, this further implies that $Dg(\x)$ is continuous and given by
\begin{align}
D g(\x)=[\nabla^2 h(g(\x)) ]^{-1}(\nabla^2 h(\x)-\eta\nabla^2 f(\x)).
\label{eqn:Dg:bgd:1}
\end{align}
To show the avoidance of strict saddles, by \Cref{thm:jason}, it suffices to show: 
\smallskip\paragraph{\bf (1) Showing $g$ is a continuously differentiable mapping}
This follows from the continuity of $Dg$ in \eqref{eqn:Dg:bgd:1}.

\smallskip\paragraph{\bf (2) Showing $\det(D g)\neq 0$ in the whole domain} This directly follows from the expression of $Dg(\x)$ \eqref{eqn:Dg:bgd:1}, and along with the positive definiteness of $\nabla^{2} h$ and $\nabla^{2} h \pm{\eta} \nabla^{2}f$.

\smallskip\paragraph{\bf (3) Showing any strict saddle  is an unstable fixed point}
For any strict saddle $\x^\star$, it is a fixed point, i.e., $g(\x^\star)=\x^\star$.
Plugging this into \eqref{eqn:Dg:bgd:1} gives
\begin{align*}
D g(\vx^\star)
=&[\nabla^2 h(\x^\star) ]^{-1}(\nabla^2 h(\x^\star)-\eta\nabla^2 f(\x^\star))
\\
\sim &
[\nabla^2 h(\vx^\star) ]^{-\frac{1}{2}}(\nabla^2 h(\vx^\star)-\eta\nabla^2 f(\vx^\star))[\nabla^2 h(\vx^\star) ]^{-\frac{1}{2}}
\\
=&\eye-\eta [\nabla^2 h(\vx^\star) ]^{-\frac{1}{2}}\nabla^2 f(\x^\star)[\nabla^2 h(\vx^\star) ]^{-\frac{1}{2}}
\\
:=&\eye-\eta\mPhi
\end{align*}
with ``$\sim$" denotes the matrix similarity. Therefore, 
\[\Sp(Dg(\x^\star))=\Sp(\eye-\eta\mPhi)>1-\eta \min_{i}\lambda_i(\mPhi)>1,\]
 since
$\mPhi$ is congruent to $\nabla^2f(\x^\star)$, which has at least a negative eigenvalue at strict saddles.
\end{proof}

\subsection{Convergence Analysis of B-PPM}
\subsubsection{First-order Convergence of B-PPM}
\begin{theorem}
Under Assumptions  \ref{assump:f}--\ref{assump:h}, B-PPM  converges to a critical point of $f$.
\end{theorem}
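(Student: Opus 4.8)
The plan is to mirror the first-order convergence proof of B-GD and appeal to \Cref{thm:KL}: since $f$ is a KL function by Assumption~\ref{assump:f}, it suffices to show that the B-PPM iterates $\{\x_k\}$ form a \emph{bounded gradient-like descent sequence}, i.e. to verify conditions \textbf{(C1)} and \textbf{(C2)} of \Cref{def:grad:seqence}. First I would note that each iteration of \eqref{eqn:bpd1} is well-defined by \Cref{lem:general:decrease}. Condition \textbf{(C1)} (sufficient decrease) is then immediate from the B-PPM estimate \eqref{eqn:general:decrease2}, which after an index shift reads $f(\x_k)-f(\x_{k+1})\ge \frac{\sigma}{2\eta}\|\x_{k+1}-\x_k\|_2^2$, so $\rho_1=\sigma/(2\eta)$.

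For the remaining ingredients, the sufficient decrease forces every iterate into the sublevel set $\calB_0=\{\x:f(\x)\le f(\x_0)\}$, which is compact by the coercivity of $f$ (Assumption~\ref{assump:f}); hence $\{\x_k\}$ is bounded, and since $h\in\calC^2$ the continuous map $\|\nabla^2 h\|$ attains a finite maximum $\rho_h(\calB_0)$ over (a convex compact set containing) $\calB_0$. Condition \textbf{(C2)} is where B-PPM is in fact cleaner than B-GD: writing the first-order optimality condition of \eqref{eqn:bpd1} at $\x_{k+1}$ gives
\[
\nabla f(\x_{k+1})+\frac{1}{\eta}\bigl(\nabla h(\x_{k+1})-\nabla h(\x_k)\bigr)=\zero,
\]
so $\nabla f$ is evaluated \emph{directly} at the new iterate, unlike the linearized update \eqref{eqn:bgd1} whose optimality condition only controls $\nabla f(\x_k)$ and therefore required an extra triangle-inequality step. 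Rearranging and applying the mean-value bound $\|\nabla h(\x_{k+1})-\nabla h(\x_k)\|_2\le \rho_h(\calB_0)\|\x_{k+1}-\x_k\|_2$ yields
\[
\|\nabla f(\x_{k+1})\|_2=\frac{1}{\eta}\|\nabla h(\x_{k+1})-\nabla h(\x_k)\|_2\le \frac{\rho_h(\calB_0)}{\eta}\|\x_{k+1}-\x_k\|_2,
\]
which is exactly \textbf{(C2)} with $\rho_2=\rho_h(\calB_0)/\eta$. With \textbf{(C1)}, \textbf{(C2)}, and boundedness established, \Cref{thm:KL} delivers convergence to a critical point.

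The proof is largely routine given this machinery, so I do not expect a genuine obstacle; the only point requiring care is the mean-value (Lipschitz-gradient) bound on $\nabla h$, which needs all iterates to remain in a bounded region so that $\|\nabla^2 h\|$ has a finite maximum there — this is precisely the role of the coercivity assumption. I would flag one minor technicality: $\calB_0$ need not be convex, so the segment joining $\x_k$ and $\x_{k+1}$ may leave $\calB_0$; this is handled exactly as in the B-GD proof by taking $\rho_h(\calB_0)$ to be the maximum of $\|\nabla^2 h\|$ over a convex compact set containing every iterate. No additional use of the relative smoothness constant $L_f$ beyond what \Cref{lem:general:decrease} already supplies is needed for first-order convergence.
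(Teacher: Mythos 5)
Your proposal is correct and follows essentially the same route as the paper: well-definedness and \textbf{(C1)} from \Cref{lem:general:decrease}, boundedness from coercivity plus sufficient decrease, and \textbf{(C2)} read directly off the optimality condition \eqref{opt:bpd:1} with the constant $\rho_h(\calB_0)/\eta$. Your observation that B-PPM avoids the extra triangle-inequality step needed for B-GD, and your remark about taking the maximum of $\|\nabla^2 h\|$ over a convex compact set containing $\calB_0$, are both accurate refinements of what the paper does implicitly.
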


\begin{proof}
First of all, B-PPM  is well-defined by \Cref{lem:general:decrease}.
Then, by \Cref{thm:KL} and the assumption that $f$ is a KL function, it is sufficient to prove that $\{\x_{k}\}_{k\in\N}$ is a gradient-like descent sequence for $f$, i.e., showing conditions \textbf{(C1)} and \textbf{(C2)}.
Condition \textbf{(C1)} follows from \eqref{eqn:general:decrease2} in \Cref{lem:general:decrease}. Condition \textbf{(C2)}  holds because by the optimality condition
\begin{align}
\nabla f(\x_{k+1})+
({\nabla h(\x_{k+1})-\nabla h(\x_{k})})/{\eta}=\zero\label{opt:bpd:1}
\end{align}
$    \|\nabla f(\x_{k+1})\|_{2}=\frac{1}{\eta}\|\nabla h(\x_{k+1})-\nabla h(\x_{k})\|_{2}
\le \frac{\rho_{h}(\calB_0)}{\eta}\|\x_{k+1}-\x_{k}\|_{2},
$    where the inequality follows from \Cref{footnote:local}.
\end{proof}

\subsubsection{Second-order Convergence of B-PPM}
\begin{theorem}
Under Assumptions  \ref{assump:f}--\ref{assump:h}, B-PPM converges almost surely to a second-order stationary point of $f$ from random initialization.
\end{theorem}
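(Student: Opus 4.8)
The plan is to mirror the second-order analysis of B-GD: since first-order convergence of B-PPM to a critical point is already established, it suffices by \Cref{thm:jason} to show that the iteration map $g$, defined implicitly via $\x_{k}=g(\x_{k-1})$ through the optimality condition \eqref{opt:bpd:1}, avoids strict saddles from almost every initialization. This reduces to verifying the three hypotheses of \Cref{thm:jason}: that $g$ is continuously differentiable, that $\det(Dg)\neq 0$ in the whole domain, and that every strict saddle of $f$ is an unstable fixed point of $g$.

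First I would compute $Dg$ by applying the implicit function theorem to \eqref{opt:bpd:1}. The key difference from B-GD is that here $\nabla f$ is evaluated at the \emph{new} iterate rather than the old one, so differentiating $\nabla f(g(\x))+\frac{1}{\eta}(\nabla h(g(\x))-\nabla h(\x))=\vzero$ with respect to $\x$ yields
\[
Dg(\x)=\left(\eta\nabla^2 f(g(\x))+\nabla^2 h(g(\x))\right)^{-1}\nabla^2 h(\x).
\]
The inverse is well-defined: relative smoothness \eqref{eqn:general:lipschitz} gives $\nabla^2 f\succeq -L_f\nabla^2 h$, so $\eta\nabla^2 f+\nabla^2 h\succeq(1-\eta L_f)\nabla^2 h\succ 0$ because $\eta<1/L_f$ and $h$ is strongly convex. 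This same positive-definiteness settles the continuity of $Dg$ (hence differentiability of $g$) and forces $\det(Dg)\neq 0$ everywhere, since $Dg$ is a product of two invertible matrices.

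The heart of the argument is the unstable-fixed-point condition. At a strict saddle $\x^\star$ we have $g(\x^\star)=\x^\star$, so writing $H=\nabla^2 h(\x^\star)\succ 0$ and conjugating by $H^{1/2}$ shows
\[
Dg(\vx^\star)=(\eta\nabla^2 f(\x^\star)+H)^{-1}H\sim(\eye+\eta\mPhi)^{-1},\quad \mPhi:=H^{-1/2}\nabla^2 f(\x^\star)H^{-1/2},
\]
where ``$\sim$'' denotes matrix similarity. Since $\mPhi$ is congruent to $\nabla^2 f(\x^\star)$, Sylvester's law of inertia guarantees it inherits a negative eigenvalue at a strict saddle, while relative smoothness forces $\mPhi\succeq -L_f\eye$, so every eigenvalue $\mu$ of $\mPhi$ obeys $1+\eta\mu>1-\eta L_f>0$. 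For the negative eigenvalue this gives $0<1+\eta\mu<1$, hence the corresponding eigenvalue $(1+\eta\mu)^{-1}$ of $Dg(\x^\star)$ exceeds one, so $\Sp(Dg(\x^\star))>1$.

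I expect the main obstacle to be the spectral-radius step, whose structure differs from B-GD. There, $Dg(\x^\star)\sim\eye-\eta\mPhi$ and a negative eigenvalue of $\mPhi$ directly pushes an eigenvalue above one; here the map is the \emph{inverse} $(\eye+\eta\mPhi)^{-1}$, so I must first confirm---via relative smoothness together with the step-size restriction $\eta<1/L_f$---that $\eye+\eta\mPhi$ is positive definite before concluding that inverting a factor slightly below one produces a factor above one. Once this positivity is in hand, the remaining verifications follow the B-GD template essentially verbatim.
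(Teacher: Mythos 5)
Your proposal is correct and follows essentially the same route as the paper: the same implicit-function-theorem computation of $Dg(\x)=\left(\nabla^2 h(g(\x))+\eta\nabla^{2}f(g(\x))\right)^{-1}\nabla^2 h(\x)$, the same three-condition verification via \Cref{thm:jason}, and the same congruence-to-$\nabla^2 f(\x^\star)$ argument at strict saddles. The only cosmetic difference is in the last step: you conjugate by $\nabla^2 h(\x^\star)^{1/2}$ to get $(\eye+\eta\mPhi)^{-1}$ and then invoke relative smoothness to ensure $\eye+\eta\mPhi\succ 0$, whereas the paper conjugates by $[\nabla^2 h(\x^\star)+\eta\nabla^2 f(\x^\star)]^{-1/2}$ to land directly on the form $\eye-\eta\mPhi$, from which the eigenvalue exceeding one is immediate; both are valid.
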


\begin{proof}
As we have already shown that B-PPM converges to a first-order critical point, it remains to use \Cref{thm:jason} to show that this first-order critical point will not be a strict saddle for almost sure, and hence would be a second-order stationary point. To apply \Cref{thm:jason}, we define
\eqref{eqn:bpd1} as $\x_{k}=g(\x_{k-1})$ and compute the Jacobian matrix $Dg$.  By the definition of $g$, we have 
\[D g(\x_{k})=\partial\x_{k+1}/\partial\x_{k}^\top.\] Now we apply the implicit function theorem to \eqref{opt:bpd:1} and in view of the non-singularity of $\nabla^2 h+\eta\nabla^2f$, we obtain that $Dg$ is continuous and given by
\[ D g(\x_{k})=\left(\nabla^2 h(\x_{k+1}) +\eta\nabla^{2}f(\x_{k+1})\right)^{-1}\nabla^2 h(\x_{k}).
\]
Noting that the above argument holds for any $\x_{k}\in\R^n$, therefore,  $D g(\x)$ is continuous and given by
\begin{align}
D g(\x)=\left(\nabla^2 h(g(\x))+\eta\nabla^{2}f(g(\x)) \right)^{-1}\nabla^2 h(\x).
\label{eqn:Dg:bpd:1}
\end{align}
By \Cref{thm:jason}, the remaining part of the proof consists of showing the following conditions.

\smallskip\paragraph{\bf (1) Showing  $g$ is a continuously differentiable mapping}
This immediately follows from the continuity of $Dg$ in \eqref{eqn:Dg:bpd:1}.

\smallskip\paragraph{\bf (2) Showing $\det(D g)\neq 0$} This is because by the expression of $Dg$:
$\det(D g(\x))=\det([\nabla^2 h(g(\x))+\eta\nabla^{2}f(g(\x)) ]^{-1})\det(\nabla^2 h(\x))$ and both $\nabla^{2} h$ and $\nabla^{2} h \pm{\eta} \nabla^{2}f$ are positive definiteness for any $\eta<1/L_f$.

\smallskip\paragraph{\bf (3) Showing any strict saddle is an unstable fixed point}
First for any strict saddle $\x^\star$, we have $\x_{k+1}=\x_{k}$ when $\x_k=\x^\star$, indicating that $\x^\star$ is a fixed point, i.e., $g(\x^\star)=\x^\star$. Now plugging  $g(\x^\star)=\x^\star$ to \eqref{eqn:Dg:bpd:1}, we have
\begin{align*}
D g(\vx^\star)=&\left[\nabla^2 h(\x^\star)+\eta\nabla^{2}f(\x^\star) \right]^{-1}\nabla^2 h(\x^\star)
\\
\sim& [\nabla^2 h(\vx^\star) +\eta \nabla^{2}f(\x^{\star})]^{-1/2}(\nabla^2 h(\vx^\star))[\nabla^2 h(\vx^\star) +\eta \nabla^{2}f(\x^{\star})]^{-1/2}\\
=& \eye\!-\!\eta[\nabla^2 h(\vx^\star) \!+\!\eta \nabla^{2}f(\x^{\star})]^{-1/2}\nabla^2 f(\x^\star)[\nabla^2 h(\vx^\star) \!+\!\eta \nabla^{2}f(\x^{\star})]^{-1/2}
\\
:=&\eye\!-\!\eta\mPhi.
\end{align*}
Clearly, we know $D g(\vx^\star)$  has an eigenvalue strictly larger than 1 since $\mPhi$ is congruent to  $\nabla^{2}f(\x^{\star})$, which has a negative eigenvalue.
\end{proof}

\subsection{Convergence Analysis of  B-PALM} \label{proof:thm:balsg:1}

\subsubsection{First-order Convergence of B-PALM}

\begin{theorem}\label{thm:balsg:1}
Under Assumptions  \ref{assump:f}--\ref{assump:h}, B-PALM  with arbitrary initialization converges to a critical point of $f$.
\end{theorem}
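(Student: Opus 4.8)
The plan is to mirror the first-order convergence arguments already carried out for B-GD and B-PPM: invoke \Cref{thm:KL} together with the KL assumption (Assumption \ref{assump:f}), so that it suffices to certify that the joint sequence $\{(\x_k,\y_k)\}_{k\in\N}$ is a bounded gradient-like descent sequence for $f(\x,\y)$, i.e.\ that conditions \textbf{(C1)} and \textbf{(C2)} of \Cref{def:grad:seqence} hold. First I would observe that B-PALM is well-defined by \Cref{lem:general:decrease:coordinate}, and that the sufficient decrease \eqref{eqn:general:decrease2:coordinate:1} forces every iterate into the sublevel set $\calB_0=\{(\x,\y):f(\x,\y)\le f(\x_0,\y_0)\}$, which is compact by the coercivity of $f$. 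On this compact set the continuous maps $\|\nabla^2 f\|$ and $\|\nabla^2 h\|$ attain finite maxima $\rho_f(\calB_0)$ and $\rho_h(\calB_0)$ (the bi-variable analog of \Cref{footnote:local}), which serve as uniform Lipschitz constants for $\nabla f$ and $\nabla h$ along the trajectory; in particular the sequence is bounded, as required by \Cref{thm:KL}.

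Condition \textbf{(C1)} is immediate from \eqref{eqn:general:decrease2:coordinate:1}. The real content is \textbf{(C2)}, namely bounding $\|\nabla f(\x_k,\y_k)\|_2$ by a multiple of $\|(\x_k,\y_k)-(\x_{k-1},\y_{k-1})\|_2$. I would write down the two optimality conditions for the alternating updates \eqref{update:gd:coordinate},
\begin{align*}
\nabla_{\x} f(\x_{k-1},\y_{k-1})+\frac{1}{\eta}(\nabla_{\x} h(\x_k,\y_{k-1})-\nabla_{\x} h(\x_{k-1},\y_{k-1}))&=\vzero,\\
\nabla_{\y} f(\x_k,\y_{k-1})+\frac{1}{\eta}(\nabla_{\y} h(\x_k,\y_k)-\nabla_{\y} h(\x_k,\y_{k-1}))&=\vzero,
\end{align*}
and bound the two blocks of $\nabla f(\x_k,\y_k)$ separately. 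For the $\y$-block, I subtract and add $\nabla_{\y} f(\x_k,\y_{k-1})$ and substitute its optimality condition; since both resulting differences vary only in the $\y$-argument with $\x_k$ held fixed, this yields $\|\nabla_{\y} f(\x_k,\y_k)\|_2\le (\rho_f(\calB_0)+\rho_h(\calB_0)/\eta)\|\y_k-\y_{k-1}\|_2$. For the $\x$-block, I subtract and add $\nabla_{\x} f(\x_{k-1},\y_{k-1})$ and substitute the $\x$-optimality condition; its Bregman difference varies only in $\x$ and is controlled by $(\rho_h(\calB_0)/\eta)\|\x_k-\x_{k-1}\|_2$, while the $f$-gradient difference is bounded by $\rho_f(\calB_0)\|(\x_k,\y_k)-(\x_{k-1},\y_{k-1})\|_2$. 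Summing the two blocks and using $\|\x_k-\x_{k-1}\|_2,\|\y_k-\y_{k-1}\|_2\le\|(\x_k,\y_k)-(\x_{k-1},\y_{k-1})\|_2$ gives \textbf{(C2)} with an explicit constant $\rho_2$.

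The main obstacle is the $\x$-block. Unlike the single-variable B-GD proof, the $\x$-optimality condition is anchored at the stale point $(\x_{k-1},\y_{k-1})$, whereas the gradient in \textbf{(C2)} must be evaluated at the fully updated point $(\x_k,\y_k)$; the mismatch in the $\y$-coordinate cannot be absorbed by the $\x$-Bregman term, which only records the change in $\x$. The resolution is to let the $f$-gradient difference span the \emph{joint} displacement $(\x_k,\y_k)-(\x_{k-1},\y_{k-1})$ and bound it by $\rho_f(\calB_0)$ times the full step, which is precisely the quantity appearing on the right-hand sides of \textbf{(C1)} and \textbf{(C2)}. Once \textbf{(C1)} and \textbf{(C2)} are in hand and the sequence is bounded, \Cref{thm:KL} delivers convergence of $\{(\x_k,\y_k)\}_{k\in\N}$ to a critical point of $f$, completing the proof.
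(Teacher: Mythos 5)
Your proposal is correct and follows essentially the same route as the paper: well-definedness and \textbf{(C1)} from \Cref{lem:general:decrease:coordinate}, \textbf{(C2)} from the two block optimality conditions together with the local constants $\rho_f(\calB_0),\rho_h(\calB_0)$ on the compact sublevel set, and then \Cref{thm:KL}. The only (cosmetic) difference is bookkeeping: you bound the partial gradients directly at the updated point $(\x_k,\y_k)$, whereas the paper first bounds $\|\nabla f(\x_{k-1},\y_{k-1})\|_2$ and converts to the new iterate at the end; both handle the stale-anchor issue in the $\x$-block by charging the mismatch to $\rho_f(\calB_0)$ times the joint displacement.
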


\begin{proof}
First of all, in view of \Cref{lem:general:decrease:coordinate}, we immediately conclude that  B-PALM is well-defined. Now, by \Cref{thm:KL} and the assumption that $f$ is KL function, it is sufficient to prove that $\{(\x_{k},\y_{k})\}_{k\in\N}$ is a gradient-like descent sequence for $f$, i.e., showing conditions \textbf{(C1)} and \textbf{(C2)}.
Condition \textbf{(C1)}  directly follows from \Cref{lem:general:decrease:coordinate}.

Now, we show condition \textbf{(C2)}. To simplify notations in the proof, we rewrite the iteration of B-PALM \eqref{update:gd:coordinate} as  
\begin{equation}
\begin{aligned}
\x_{+}=& \argmin_{\x'} \lg \nabla_{\x} f(\x',\y), \x'-\x\rg+\frac{1}{\eta}D^1_{h}(\x',\x;\y),\\
\y_+=& \argmin_{\y'} \lg\nabla_{\y'} f(\x_{+},\y),\y'-\y\rg+\frac{1}{\eta}D^2_{h}(\y',\y;\vx_{+}),
\label{eqn:new:BPALM}
\end{aligned}
\end{equation}
where the optimality condition for the first-block is given by
\begin{align}
\nabla_{\x} h(\x_{+},\y)=\nabla_{\x} h(\x,\y)-\eta \nabla_{\x}f(\x,\y)\label{eqn:first:balsg}
\end{align}
which then implies that
\[\|\nabla_{\x} f(\x,\y)\|_{2}=\frac{1}{\eta}\|\nabla_{\x} h(\x_{+},\y)-\nabla_{\x} h(\x,\y)\|_{2}  \leq   \frac{\rho_h(\calB_0)}{\eta} \| (\x_{+},\y_{+})-(\x,\y) \|_{2},
\]
where the inequality follows  by \Cref{footnote:local} and $\|(\x_{+},\y)-(\x,\y)\|_2\le\|(\x_{+},\y_{+})-(\x,\y)\|_2$.
Then applying a similar analysis to the  
optimality condition of the second-block of B-PALM:
\begin{align}
\nabla_{\y} h(\x_+, \y_{+})=\nabla_{\y} h(\x_+,\y)-\eta \nabla_{\y}f(\x_{+},\y),
\label{eqn:second:balsg}
\end{align}
we get
\[
\|\nabla_{\y} f(\x_{+},\y)\|_{2} \leq \frac{\rho_h(\calB_0)}{\eta}\|\y_{+}-\y\|_{2}.
\]
Using a similar argument as in \eqref{eqn:footnote}, we have
\begin{align*}
\|\nabla_{\y} f(\x,\y)\|_{2}
\leq \left(  \frac{\rho_h(\calB_0)}{\eta} +\rho_f(\calB_0)  \right) \| (\x_{+},\y_{+})-(\x,\y) \|_{2}.
\end{align*}
Combining the above two, we get an equivalent version of the bounded gradient property
\begin{align*}
\|\nabla f(\x,\y)\|_{2}
\le& \|\nabla_{\x} f(\x,\y\|_{2} \!+\!\|\nabla_{\y} f(\x,\y)\|_{2}
\le \!(  \frac{2\rho_h(\calB_0)}{\eta} \!+\!\rho_f(\calB_0)  \| (\x_{+},\y_{+})\!-\!(\x,\y) \|_{2}.
\end{align*}
Therefore,
$
\|\nabla f(\x_{+},\y_{+})\|_{2}
\le \left(  \frac{2\rho_h(\calB_0)}{\eta} +2\rho_f(\calB_0)  \right)\| (\x_{+},\y_{+})-(\x,\y) \|_{2}.
$
\end{proof}

\subsubsection{Second-order Convergence of  B-PALM} \label{proof:thm:balsg:2}
\begin{theorem}\label{thm:balsg:2}     Under Assumptions  \ref{assump:f}--\ref{assump:h}, B-PALM converges almost surely to a second-order stationary point of $f$ from random initialization.
\end{theorem}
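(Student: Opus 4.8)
The plan is to reuse verbatim the two-step template already established for B-GD and B-PPM: \Cref{thm:balsg:1} gives convergence to a critical point, so by \Cref{thm:jason} it suffices to realize B-PALM as a $C^1$ map $g:\R^{n+m}\to\R^{n+m}$, $(\x_{k},\y_{k})=g(\x_{k-1},\y_{k-1})$, that has an everywhere-nonsingular Jacobian and for which every strict saddle of $f$ is an unstable fixed point. I would write $g=g_2\circ g_1$ as the composition of the two block updates in \eqref{eqn:new:BPALM}, where $g_1:(\x,\y)\mapsto(\x_+,\y)$ is defined implicitly by the first optimality condition \eqref{eqn:first:balsg} and $g_2:(\x_+,\y)\mapsto(\x_+,\y_+)$ by the second \eqref{eqn:second:balsg}. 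Applying the implicit function theorem to \eqref{eqn:first:balsg}--\eqref{eqn:second:balsg}, and using that $\nabla^2_{\x\x}h$ and $\nabla^2_{\y\y}h$ are positive definite by strong bi-convexity, shows $g_1,g_2$ (hence $g$) are $C^1$ with $Dg=Dg_2\,Dg_1$.

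The first two hypotheses of \Cref{thm:jason} are then routine. Differentiability is immediate from the implicit function theorem. For nonsingularity I would exploit the product form: $Dg_1$ is block upper-triangular with diagonal blocks $[\nabla^2_{\x\x}h(\x_+,\y)]^{-1}(\nabla^2_{\x\x}h(\x,\y)-\eta\nabla^2_{\x\x}f(\x,\y))$ and $\eye$, while $Dg_2$ is block lower-triangular with diagonal blocks $\eye$ and $[\nabla^2_{\y\y}h(\x_+,\y_+)]^{-1}(\nabla^2_{\y\y}h(\x_+,\y)-\eta\nabla^2_{\y\y}f(\x_+,\y))$. The step size $\eta<1/\max\{L_1,L_2\}$ together with relative bi-smoothness forces $\nabla^2_{\x\x}h\pm\eta\nabla^2_{\x\x}f\succ0$ and $\nabla^2_{\y\y}h\pm\eta\nabla^2_{\y\y}f\succ0$, so each diagonal block is invertible and $\det(Dg)=\det(Dg_2)\det(Dg_1)\neq0$ on the whole domain.

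The crux is the third hypothesis. At a critical point the optimality conditions are solved by $\x_+=\x^\star,\ \y_+=\y^\star$, so $(\x^\star,\y^\star)$ is a fixed point; differentiating implicitly there, the off-diagonal Hessians of $h$ cancel and one is left with $Dg(\x^\star,\y^\star)=\eye-\eta\,\mM^{-1}\mL_\eta\,\nabla^2 f(\x^\star,\y^\star)$, where $\mM=\operatorname{blkdiag}(\nabla^2_{\x\x}h,\nabla^2_{\y\y}h)\succ0$ and $\mL_\eta=\eye-\eta\mE$ is unit lower block-triangular, its $(2,1)$-block $\mE$ being $\nabla^2_{\y\x}f(\x^\star,\y^\star)[\nabla^2_{\x\x}h(\x^\star,\y^\star)]^{-1}$; this factor encodes the Gauss--Seidel use of the fresh iterate $\x_+$ in the $\y$-update. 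Here lies the \textbf{main obstacle}: for B-GD and B-PPM the analogous Jacobian was \emph{similar to a symmetric} matrix $\eye-\eta\mPhi$ with $\mPhi$ congruent to $\nabla^2 f$, so a negative Hessian eigenvalue handed over an eigenvalue exceeding $1$ for free. The coupling factor $\mL_\eta$ breaks this symmetry: $\mM^{-1}\mL_\eta\nabla^2 f$ is in general non-normal, may have complex spectrum, and is no longer congruent to $\nabla^2 f$, so the one-line argument fails and a genuinely new estimate is needed.

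To finish I would argue by a determinant-plus-continuity route. Since $\det(\mL_\eta)=1$ and $\mM\succ0$, we have $\det\big(\eye-Dg(\x^\star,\y^\star)\big)=\eta^{\,n+m}\det(\mM^{-1})\det\big(\nabla^2 f(\x^\star,\y^\star)\big)$, which is nonzero for every admissible $\eta$ at a nondegenerate saddle; hence no eigenvalue of $Dg$ can ever equal $1$ as $\eta$ ranges over $(0,1/\max\{L_1,L_2\})$. As $\eta\to0^+$ one has $Dg=\eye-\eta\,\mM^{-1}\nabla^2 f+O(\eta^2)$, and $\mM^{-1}\nabla^2 f$ is similar to the symmetric $\mM^{-1/2}\nabla^2 f\,\mM^{-1/2}$, which is congruent to $\nabla^2 f$ and therefore inherits a negative eigenvalue; this yields a real eigenvalue of $Dg$ strictly larger than $1$. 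Because this eigenvalue cannot cross the value $1$ while $\eta$ grows, tracking it continuously gives $\Sp(Dg(\x^\star,\y^\star))>1$ throughout the admissible range, so every strict saddle is an unstable fixed point and \Cref{thm:jason} delivers almost-sure avoidance from random initialization. The delicate point I expect to have to nail down is excluding the scenario where this unstable real eigenvalue collides with another and escapes into a stable complex pair; I would control it with the scalar/low-dimensional block computation, in which $\det(\mM^{-1}\mL_\eta\nabla^2 f)=\det(\mM^{-1})\det(\nabla^2 f)<0$ forces a sign-changing real spectrum, together with a parity argument on the number of eigenvalues lying outside the unit disk.
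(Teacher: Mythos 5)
Your setup---the composition $g=g_2\circ g_1$, the implicit-function-theorem Jacobians, the nonsingularity of $Dg$ from the block-triangular structure, and the fixed-point formula $Dg(\x^\star,\y^\star)=\eye-\eta\,\mM^{-1}\mL_\eta\nabla^2 f(\x^\star,\y^\star)$---coincides with the paper's, and you correctly identify the real difficulty: the Gauss--Seidel coupling destroys the symmetry that made the B-GD/B-PPM saddle step a one-line congruence. But your proposed resolution has a genuine gap. Both the identity $\det(\eye-Dg)=\eta^{n+m}\det(\mM^{-1})\det(\nabla^2 f)$ and the parity argument via $\det(\mM^{-1}\mL_\eta\nabla^2 f)<0$ require $\det(\nabla^2 f(\x^\star,\y^\star))$ to be nonzero, and the latter requires it to be \emph{negative}, i.e.\ an odd number of negative Hessian eigenvalues. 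A strict saddle in the sense of \Cref{def:critical:point} may have an even number of negative eigenvalues (then the determinant is positive and the parity argument forces no negative real eigenvalue of $\mM^{-1}\mL_\eta\nabla^2 f$, hence no real eigenvalue of $Dg$ above $1$), or a singular Hessian (then $\det(\eye-Dg)=0$, so $1$ \emph{is} an eigenvalue of $Dg$ and your ``cannot cross $1$'' barrier collapses). Since the non-normal matrix $\mM^{-1}\mL_\eta\nabla^2 f$ can indeed push the tracked real eigenvalue off the real axis as $\eta$ grows, the continuity-in-$\eta$ homotopy does not close on its own---exactly the scenario you flag, and your proposed fix does not cover it in general.

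The paper circumvents this by running the continuity argument in the spectral parameter rather than in $\eta$. It looks for a real eigenvalue $\mu\in(0,1)$ of $Dg(\x^\star,\y^\star)^{-1}$ (equivalently an eigenvalue $1/\mu>1$ of $Dg$), writes out $\det(Dg^{-1}-\mu\eye)=0$ using the block factorization, and conjugates by $\operatorname{diag}(\sqrt{\mu}\,\eye_n,\eye_m)$---legitimate precisely because $\mu>0$---to turn the singularity condition into $\det(\mJ(\mu))=0$ for a \emph{symmetric}, continuous pencil $\mJ(\mu)$ satisfying $\lim_{\mu\to0^+}\mJ(\mu)=\operatorname{diag}(\mH_1,\mH_2)/\eta\succ0$ and $\mJ(1)=\nabla^2 f(\x^\star,\y^\star)$. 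The intermediate value theorem applied to the real, continuous function $\lambda_{\min}(\mJ(\mu))$ then produces the required $\mu\in(0,1)$ using only $\lambda_{\min}(\nabla^2 f(\x^\star,\y^\star))<0$, with no nondegeneracy or parity assumption. To salvage your route you would need to replace the determinant/parity step by some such symmetrization; as written, your argument only covers strict saddles whose Hessian is nonsingular with an odd number of negative eigenvalues.
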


\begin{proof}
As we have shown the first-order convergence, to show the second-order convergence, we will use \Cref{thm:jason} to show that the B-PALM avoids strict saddle for almost surely. We start by computing the algorithmic mapping $g$ of the B-PALM. Towards that end, 
we rewrite  B-PALM \eqref{eqn:new:BPALM} as
\begin{align*}
(\x_{+},\y)&=g_{1}(\x,\y),
\\
(\x,\y_{+})&=g_{2}(\x,\y).
\end{align*}
The mappings $g_{1},~g_{2}$ are well-defined in the whole domain $\R^{n}\times \R^{m}$  in view of strong convexity and coercivity of the objective function in \eqref{eqn:new:BPALM}.
Then  B-PALM  can be viewed as iteratively performing the following composite mapping for $k=1,2,\ldots$
\begin{align}
(\x_{k},\y_{k})=g(\x_{k-1},\y_{k-1}),
\label{eqn:g:balsg}
\end{align}
where the mapping $g$ is defined as the composition $g:=g_{2}\circ g_{1}
$. Therefore, we can use the chain rule to compute $Dg$. For this purpose, let us first compute  $Dg_{1}$ and $Dg_{2}$, respectively. We compute $Dg_1$ in view of the definition $g_1$ $(\x_{+},\y)=g_{1}(\x,\y)$ and obtain that
\[
Dg_{1}(\x,\y)=\begin{bmatrix} {\partial \x_{+}}/{\partial \x^{\top}} & {\partial \x_{+}}/{\partial \y^{\top}} \\
\zero & \eye_{m}  \end{bmatrix}.
\]
To compute ${\partial \x_{+}}/{\partial \x^{\top}}$ and ${\partial \x_{+}}/{\partial \y^{\top}}$, we can apply the implicit function theorem to the optimality condition \eqref{eqn:first:balsg} to get that
\begin{align*}
\nabla_{\x\x}^2 h(\x_{+},\y)({\partial \x_{+}}/{\partial \x^{\top}})
&=\nabla_{\x\x}^2 h(\x,\y)-\eta\nabla^2_{\x\x}f(\x,\y),
\\
\nabla_{\x\x}^2 h(\x_{+},\y)({\partial \x_{+}}/{\partial \y^{\top}})
&=\nabla_{\x\y}^2 h(\x,\y)-\nabla_{\x\y}^2 h(\x_{+},\y)-\eta\nabla^2_{\x\y}f(\x,\y).
\end{align*}
Then in view of the strong bi-convexity of  $h$, we can further get
\begin{align*}
{\partial \x_{+}}/{\partial \x^{\top}}
&=\nabla^{2}_{\x\x}h(\x_{+},\y)^{-1}\left(\nabla^{2}_{\x\x}h(\x,\y)-\eta\nabla^2_{\x\x}f(\x,\y)\right),
\\
{\partial \x_{+}}/{\partial \y^{\top}}
&=\nabla^{2}_{\x\x}h(\x_{+},\y)^{-1}(\nabla_{\x\y}^2 h(\x,\y)-\nabla_{\x\y}^2 h(\x_{+},\y)-\eta\nabla^2_{\x\y}f(\x,\y)).
\end{align*}
Similarly, the implicit function theorem can be applied to the optimality condition \eqref{eqn:second:balsg} to compute ${\partial \y_{+}}/{\partial \x^{\top}}$ and ${\partial \y_{+}}/{\partial \y^{\top}}$.  As a result, we have
\begin{equation}
\begin{aligned}
&Dg_{1}(\x,\y)
=
\begin{bmatrix}\nabla^{2}_{\x\x}h(\x_{+},\y)^{-1}
 & \zero
\\
\zero& \eye_{m}
\end{bmatrix}
\\
&
\begin{bmatrix}
\nabla^{2}_{\x\x}h(\x,\y)-\eta\nabla^2_{\x\x}f(\x,\y)
&
\nabla_{\x\y}^2 h(\x,\y)-\nabla_{\x\y}^2 h(\x_{+},\y)-\eta\nabla^2_{\x\y}f(\x,\y)\\
\zero & \eye_{m}
\end{bmatrix},
\\
&Dg_{2}(\x,\y)
=
\begin{bmatrix}
\eye_n&\zero\\
\zero& \nabla^{2}_{\y\y}h(\x,\y_{+})^{-1}
\end{bmatrix}
\\
&
\begin{bmatrix}
\eye_{n} & \zero\\
\nabla_{\y\x}^2 h(\x,\y)-\nabla_{\y\x}^2 h(\x,\y_{+})-\eta\nabla^2_{\y\x}f(\x,\y)
&
\nabla^{2}_{\y\y}h(\x,\y)-\eta\nabla^2_{\y\y}f(\x,\y)
\end{bmatrix}.
\label{Dg:alsg}
\end{aligned}
\end{equation}
Finally, $Dg$ is given by the following chain rule:
\begin{align}
Dg(\x,\y)=Dg_{2}(g_{1}(\x,\y))Dg_{1}(\x,\y).
\label{def:Dg:balsg}
\end{align}

By \Cref{thm:jason}, to show that the mapping $g$ can almost surely avoid the strict saddles,  it suffices to show the following conditions:

\smallskip\paragraph{\bf (1) Showing  $g$ is continuously differentiable mapping}
This follows from the continuity of $Dg$ in \eqref{def:Dg:balsg}.

\smallskip\paragraph{\bf (2) Showing $\det(D g)\neq 0$ in the whole domain}
First, $Dg=Dg_{2}Dg_{1}$ by the chain rule with the square matrices $Dg_{1},~ Dg_{2}$ given in \eqref{Dg:alsg}. Second, $Dg_{1}$ is nonsingular because of its upper-triangular block structure by \eqref{Dg:alsg}) and the positive definiteness of $\nabla_{\x\x}^2h-\eta\nabla^2_{\x\x}f$. Similarly, $Dg_2$ is also nonsingular in the entire domain.  This completes the proof.

\smallskip\paragraph{\bf (3) Showing any strict saddle is an unstable fixed point} We first show any strict saddle $(\x^{\star},\y^{\star})$ of $f$ is a fixed point of $g$. This is because $(\x_+,\y_+)=(\x^\star,\y^\star),(\x,\y)=(\x^\star,\y^\star)$ satisfy the optimality conditions of both $g_1$ and $g_2$ (cf. \eqref{eqn:first:balsg} and \eqref{eqn:second:balsg}) and $g=g_2\circ g_1$ is well-defined by \Cref{lem:general:decrease:coordinate}.

It remains to show   $\Sp(D g(\x^{\star},\y^{\star}))>1$. For convenience, denote
\begin{align}
\begin{bmatrix}\mH_1 \\ \mH_2\end{bmatrix}&:=
\begin{bmatrix} \nabla^2_{\x\x} h(\x^\star,\y^\star)
\\
\nabla^2_{\y\y} h(\x^\star,\y^\star)
\end{bmatrix}
,~~~~
\begin{bmatrix}
\mF_{11}&\mF_{12}\\ \mF_{21}&\mF_{22}
\end{bmatrix}
:=
\begin{bmatrix}
\nabla_{\x\x}^2f(\x^\star,\y^\star)&\nabla_{\x\y}^2f(\x^\star,\y^\star)\\
\nabla_{\y\x}^2f(\x^\star,\y^\star)&\nabla_{\y\y}^2f(\x^\star,\y^\star)
\end{bmatrix}
\label{def:f}
\end{align}
Now let us compute $D g(\x^{\star},\y^{\star})$ by plugging
$(\x_{+},\y_{+})=(\x,\y)=(\x^\star,\y^\star)$
in \eqref{def:Dg:balsg}:
\begin{equation*}
\begin{aligned}
D g(\x^{\star},\y^{\star})
=&
\begin{bmatrix}
\eye_{n} & \zero\\
-\eta\mH_{2}^{-1}\mF_{21}
&
\eye_{m}- \eta \mH_{2}^{-1}\mF_{22}
\end{bmatrix}
\begin{bmatrix} \eye_{n}-\eta\mH_{1}^{-1}\mF_{11} &
-\eta\mH_{1}^{-1}\mF_{12}
\\
\zero & \eye_{m}
\end{bmatrix}
\\
=&
\begin{pmatrix}
\eye \!-\! \eta\begin{bmatrix}\zero  &\\ & \mH_2^{-1}\end{bmatrix}
\begin{bmatrix}
\mF_{11}&\mF_{12}\\ \mF_{21}&\mF_{22}
\end{bmatrix}
\end{pmatrix}
\begin{pmatrix}
\eye- \eta
\begin{bmatrix}
\mH_{1}^{-1}  &\\ & \zero
\end{bmatrix}
\begin{bmatrix}
\mF_{11}&\mF_{12}\\ \mF_{21}&\mF_{22}
\end{bmatrix}
\end{pmatrix}
\\
=& \eye-
\begin{bmatrix}
\eta\mH_{1}^{-1}  &
\\
-\eta^{2} \mH_{2}^{-1}\mF_{21}\mH_{1}^{-1}& \eta\mH_{2}^{-1}
\end{bmatrix}
\begin{bmatrix}
\mF_{11}&\mF_{12}\\ \mF_{21}&\mF_{22}
\end{bmatrix}
\\
=&
\eye-\begin{bmatrix}\frac{1}{\eta}\mH_1  &
\\
\mF_{21}& \frac{1}{\eta}\mH_2\end{bmatrix}^{-1}
\begin{bmatrix}
\mF_{11}&\mF_{12}\\ \mF_{21}&\mF_{22}
\end{bmatrix}
\\
=&
\begin{bmatrix}\frac{1}{\eta}\mH_1  &
\\
\mF_{21}& \frac{1}{\eta}\mH_2\end{bmatrix}^{-1}
\begin{bmatrix}\frac{1}{\eta}\mH_1 -\mF_{11} & -\mF_{12}
\\
& \frac{1}{\eta}\mH_2-\mF_{22}\end{bmatrix}.
\end{aligned}
\end{equation*}
Because $\mH_1/\eta-\mF_{11}$ and $\mH_2/\eta-\mF_{22}$ are nonsingular, we have
\[D g(\x^{\star},\y^{\star})^{-1}=\begin{bmatrix}\frac{1}{\eta}\mH_1 -\mF_{11} & -\mF_{12}\\
& \frac{1}{\eta}\mH_2-\mF_{22}\end{bmatrix}^{-1}\begin{bmatrix}\frac{1}{\eta}\mH_1  &
\\
\mF_{21}& \frac{1}{\eta}\mH_2\end{bmatrix}.\]
To show  $\Sp(Dg(\x^\star,\y^\star))>1$, it suffices to show that $\det(Dg(\x^{\star},\y^{\star})^{-1}-\mu\eye)=0,~\tmop{for}~\mu\in (0,1)$, which is equivalent to
\begin{align*}
\iff& \det\left(\begin{bmatrix}\frac{1}{\eta}\mH_1 -\mF_{11} & -\mF_{12}
\\
& \frac{1}{\eta}\mH_2-\mF_{22}\end{bmatrix}^{-1}\begin{bmatrix}\frac{1}{\eta}\mH_1  &
\\
\mF_{21}& \frac{1}{\eta}\mH_2\end{bmatrix}-\mu\eye\right)=0
\\
\iff &
\det\left(\begin{bmatrix}\frac{1}{\eta}\mH_1  &
\\
\mF_{21}& \frac{1}{\eta}\mH_2\end{bmatrix}-\mu\begin{bmatrix}\frac{1}{\eta}\mH_1 -\mF_{11} & -\mF_{12}
\\
& \frac{1}{\eta}\mH_2-\mF_{22}\end{bmatrix}\right)=0
\\
\iff &
\det\left(\begin{bmatrix}\frac{1}{\eta}(1-\mu)\mH_1 +\mu\mF_{11} & \mu \mF_{12}
\\
\mF_{21}&
\frac{1}{\eta}(1-\mu)\mH_2 + \mu\mF_{22} \end{bmatrix}\right)=0
%\\
%\iff &
%\det\left(
%\begin{bmatrix}
%\sqrt{\mu}\eye_{n} &
%\\
%& \eye_{m}
%\end{bmatrix}
%\begin{bmatrix}\frac{1}{\eta}(1-\mu)\mH_1 +\mu\mF_{11} & \sqrt{\mu} \mF_{12}
%\\
%\sqrt{\mu}\mF_{21}&
%\frac{1}{\eta}(1-\mu)\mH_2 + \mu\mF_{22} \end{bmatrix}
%\begin{bmatrix}
%\frac{1}{\sqrt{\mu}}\eye_{n} &
%\\
%& \eye_{m}
%\end{bmatrix}
%\right)=0
\\
\iff &
\det\left(
\begin{bmatrix}
\sqrt{\mu}\eye_{n} \!\!&
\\
\!\!& \eye_{m}
\end{bmatrix}
\begin{bmatrix}\frac{1}{\eta}(1\!-\!\mu)\mH_1 \!+\!\mu\mF_{11} & \sqrt{\mu} \mF_{12}
\\
\sqrt{\mu}\mF_{21}&
\frac{1}{\eta}(1\!-\!\mu)\mH_2 \!+\! \mu\mF_{22} \end{bmatrix}
\begin{bmatrix}
\frac{1}{\sqrt{\mu}}\eye_{n} \!\!&
\\
\!\!& \eye_{m}
\end{bmatrix}
\right)=0
\\
\iff &
\det\left(
\begin{bmatrix}
\frac{1}{\eta}(1-\mu)\mH_1 +\mu\mF_{11} & \sqrt{\mu} \mF_{12}
\\
\sqrt{\mu}\mF_{21}&
\frac{1}{\eta}(1-\mu)\mH_2 + \mu\mF_{22} 
\end{bmatrix}
\right)=0
\end{align*}
Now the problem reduces to show the matrix
\[\mJ(\mu):=
\begin{bmatrix}\frac{1}{\eta}(1-\mu)\mH_1 +\mu\mF_{11} & \sqrt{\mu} \mF_{12}
\\
\sqrt{\mu}\mF_{21}&
\frac{1}{\eta}(1-\mu)\mH_2 + \mu\mF_{22} \end{bmatrix}
\]
is a singular matrix for some $\mu\in(0,1).$ First of all, observing that $\mJ(\mu)$ is symmetric and continuous (w.r.t. $\mu$), we have all its eigenvalues are real-valued an continuous (w.r.t. $\mu$) by \cite[Theorem 5.1]{kato2013perturbation}. Second,  note that
\begin{align*}
\lim\limits_{\mu\to 0^{+}}\mJ(\mu)=&
\begin{bmatrix}
\frac{1}{\eta} \mH_1 &\\
& \frac{1}{\eta} \mH_2
\end{bmatrix}\succ 0
,\quad
\mJ(1)=\begin{bmatrix}
\mF_{11}&\mF_{12}\\ \mF_{21}&\mF_{22}
\end{bmatrix}
=\nabla^{2}f(\x^{\star},\y^{\star}).
\end{align*}
First, since  $\mJ(0^{+})$ is positive definite, $\lambda_{\min}(\mJ(0^{+}))>0.$ Second, since $(\x^{\star},\y^{\star})$ is a strict saddle of $f$,  $\lambda_{\min}(\mJ(1))<0.$ Then along with that $\lambda_{\min}(\mJ(\mu))$ is continuous and real-valued, we show that $\lambda_{\min}(\mJ(\mu))=0$ for some $\mu\in (0,1)$, which completes the proof.
\end{proof}

\subsection{Convergence Analysis of B-PAM}\label{proof:thm:bals:1}

\subsubsection{First-order Convergence of B-PAM}
\begin{theorem}\label{thm:bals:1}
Under Assumptions  \ref{assump:f}--\ref{assump:h},  B-PAM with arbitrary initialization converges to a critical point of $f$.
\end{theorem}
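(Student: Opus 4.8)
The plan is to follow verbatim the template already established for B-GD, B-PPM, and B-PALM: invoke \Cref{thm:KL}, which delivers convergence to a critical point once the iterate sequence $\{(\x_k,\y_k)\}_{k\in\N}$ is shown to be a bounded gradient-like descent sequence, i.e.\ to satisfy the sufficient decrease property \textbf{(C1)} and the bounded gradient property \textbf{(C2)}. First I would note that B-PAM is well-defined by \Cref{lem:general:decrease:coordinate}, and that \textbf{(C1)} is immediate from the sufficient decrease estimate \eqref{eqn:general:decrease2:coordinate} of that same lemma. The decrease property forces every iterate (and, by the block-wise decrease used to prove \Cref{lem:general:decrease:coordinate}, also the intermediate point $(\x_k,\y_{k-1})$) to remain in the compact sub-level set $\calB_0=\{(\x,\y):f(\x,\y)\le f(\x_0,\y_0)\}$ by coercivity, so that $\|\nabla^2 h\|$ and $\|\nabla^2 f\|$ are bounded there by constants $\rho_h(\calB_0)$ and $\rho_f(\calB_0)$ (cf.\ \Cref{footnote:local}).

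The substance of the argument is \textbf{(C2)}. Writing $(\x,\y)$ for the current iterate and $(\x_+,\y_+)$ for the next, the two subproblems of B-PAM yield the optimality conditions
\[
\nabla_{\x}f(\x_+,\y)=\frac{1}{\eta}\big(\nabla_{\x}h(\x,\y)-\nabla_{\x}h(\x_+,\y)\big),\qquad
\nabla_{\y}f(\x_+,\y_+)=\frac{1}{\eta}\big(\nabla_{\y}h(\x_+,\y)-\nabla_{\y}h(\x_+,\y_+)\big).
\]
The key structural point --- and the one that makes B-PAM behave like B-PPM rather than B-PALM --- is that, because the subproblems minimize the true $f$ rather than its linearization, the gradient of $f$ is evaluated at the \emph{proximal} (updated) point on the left-hand sides. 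Taking norms and using the local Lipschitz continuity of $\nabla h$ on $\calB_0$ gives $\|\nabla_{\x}f(\x_+,\y)\|_2\le\frac{\rho_h(\calB_0)}{\eta}\|\x_+-\x\|_2$ and $\|\nabla_{\y}f(\x_+,\y_+)\|_2\le\frac{\rho_h(\calB_0)}{\eta}\|\y_+-\y\|_2$, each in turn bounded by $\frac{\rho_h(\calB_0)}{\eta}\|(\x_+,\y_+)-(\x,\y)\|_2$.

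The one genuine subtlety --- the step I expect to require the most care --- is that the first optimality condition only controls $\nabla_{\x}f$ at $(\x_+,\y)$, whereas \textbf{(C2)} asks for the full gradient at the final point $(\x_+,\y_+)$; the discrepancy arises precisely because $\y$ is subsequently updated. I would bridge this gap exactly as in \eqref{eqn:footnote}, using the local Lipschitz continuity of $\nabla f$ on $\calB_0$:
\[
\|\nabla_{\x}f(\x_+,\y_+)\|_2\le\|\nabla_{\x}f(\x_+,\y)\|_2+\rho_f(\calB_0)\|\y_+-\y\|_2\le\Big(\frac{\rho_h(\calB_0)}{\eta}+\rho_f(\calB_0)\Big)\|(\x_+,\y_+)-(\x,\y)\|_2.
\]
Combining this with the $\y$-block estimate through $\|\nabla f\|_2\le\|\nabla_{\x}f\|_2+\|\nabla_{\y}f\|_2$ then establishes \textbf{(C2)} with $\rho_2=\frac{2\rho_h(\calB_0)}{\eta}+\rho_f(\calB_0)$, and \Cref{thm:KL} yields convergence of $\{(\x_k,\y_k)\}$ to a critical point of $f$. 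No new machinery beyond the B-PALM/B-PPM analysis is needed; the argument is routine once the proximal-point form of the optimality conditions is exploited.
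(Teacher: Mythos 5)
Your proposal is correct and follows essentially the same route as the paper's proof: well-definedness and \textbf{(C1)} from \Cref{lem:general:decrease:coordinate}, then \textbf{(C2)} via the proximal-form optimality conditions (gradient of $f$ evaluated at the updated point), with the $\x$-block gap at $(\x_+,\y_+)$ bridged by local Lipschitz continuity of $\nabla f$ on $\calB_0$ exactly as in \eqref{eqn:footnote}, yielding the same constant $\frac{2\rho_h(\calB_0)}{\eta}+\rho_f(\calB_0)$. No substantive differences.
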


\begin{proof}
First of all, as a direct consequence of \Cref{lem:general:decrease:coordinate}, we are guaranteed B-PAM is well defined.
Now, in view of \Cref{thm:KL} and the assumption that $f$ is KL function, it is sufficient to prove that $\{(\x_{k},\y_{k})\}_{k\in\N}$ is a gradient-like descent sequence for $f$, i.e., showing conditions \textbf{(C1)} and \textbf{(C2)}.
Condition \textbf{(C1)} directly follows from \Cref{lem:general:decrease:coordinate}.

Now, we show condition \textbf{(C2)}. To simplify notations in the proof, we rewrite the iteration of B-PALM \eqref{update:gd:coordinate} as  
\begin{equation}
\begin{aligned}
\x_{+}=& \argmin_{\x'} f(\x',\y)+\frac{1}{\eta}D^1_{h}(\x',\x;\y),\\
\y_{+}=& \argmin_{\y'}  f(\x_+,\y')+\frac{1}{\eta}D^2_{h}(\y',\y;\vx_+),
\end{aligned}
\label{eqn:new:BPAM}
\end{equation}
where the optimality condition of the first block is given by
\begin{align}
\nabla_{\x} h(\x_{+},\y)=\nabla_{\x} h(\x,\y)-\eta \nabla_{\x}f(\x_{+},\y).\label{eqn:first:bals}
\end{align}
Then as consequence of \Cref{footnote:local}, we have
\begin{align}\|\nabla_{\x} f(\x_{+},\y)\|_{2}=\frac{1}{\eta}\|\nabla_{\x} h(\x_{+},\y)-\nabla_{\x} h(\x,\y)\|_{2}\leq \frac{\rho_h(\calB_0)}{\eta}\|\x_{+}-\x\|_{2}.
\label{B-PAM:1}
\end{align}    
Then using  \eqref{B-PAM:1} and along with a similar argument as in \Cref{footnote:local}, we have
\begin{align*}
\|\nabla_{\x} f(\x_{+},\y_{+})\|_{2}& \leq \|\nabla_{\x} f(\x_{+},\y_{+})-\nabla_{\x} f(\x_{+},\y)\|_{2}+
\|\nabla_{\x} f(\x_{+},\y)\|_{2}
\\
&\leq \left( \rho_f(\calB_0)+\frac{\rho_h(\calB_0)}{\eta}\right) \| (\x_{+},\y_{+})-(\x,\y) \|_{2}.
\end{align*}
Now using a similar argument on the optimality condition for the second-block of B-PAM:
\begin{align}
\nabla_{\y} h(\x_+,\y_{+})=\nabla_{\y} h(\x_+,\y)-\eta \nabla_{\y}f(\x_{+},\y_{+}),    
\label{B-PAM:2}
\end{align}
we get that
\[
\|\nabla_{\y} f(\x_{+},\y_{+})\|_{2} \leq \frac{\rho_h(\calB_0)}{\eta}\|\y_{+}-\y\|_{2}.
\]
Therefore, by $\|\nabla f(\x_{+},\y_{+})\|_{2}
\le\|\nabla_{\x} f(\x_{+},\y_{+})\|_{2} +\|\nabla_{\y} f(\x_{+},\y_{+})\|_{2}$, we have
\begin{align*}
\|\nabla f(\x_{+},\y_{+})\|_{2}
\le  \left(  \frac{2\rho_h(\calB_0)}{\eta} +\rho_f(\calB_0)  \right)\| (\x_{+},\y_{+})-(\x,\y) \|_{2}.
\end{align*}
\end{proof}

\subsubsection{Second-order Convergence of B-PAM}
\label{proof:thm:bals:2}

\begin{theorem}\label{thm:bals:2}
Under Assumptions  \Cref{assump:f, assump:h},  B-PAM   almost surely converges to a second-order stationary point of $f$ with random initialization.
\end{theorem}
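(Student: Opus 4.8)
The plan is to follow the argument for B-PALM in \Cref{thm:balsg:2} almost verbatim. Since \Cref{thm:bals:1} already establishes first-order convergence, it suffices by \Cref{thm:jason} to show that B-PAM, regarded as a dynamical system, avoids strict saddles almost surely from a random initialization. Writing the two updates of \eqref{eqn:new:BPAM} as $(\x_+,\y)=g_1(\x,\y)$ and $(\x,\y_+)=g_2(\x,\y)$ (each well-defined on $\R^n\times\R^m$ by \Cref{lem:general:decrease:coordinate}), B-PAM is the composite map $g=g_2\circ g_1$, and I would verify the three hypotheses of \Cref{thm:jason}: that $g\in\calC^1$, that $\det(Dg)\neq 0$ everywhere, and that every strict saddle is an unstable fixed point.

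First I would compute $Dg=Dg_2(g_1(\cdot))\,Dg_1$ by the chain rule. The one structural difference from B-PALM is that the optimality conditions \eqref{eqn:first:bals} and \eqref{B-PAM:2} evaluate $\nabla f$ at the \emph{updated} point; hence applying the implicit function theorem to \eqref{eqn:first:bals} produces the factor $[\nabla^2_{\x\x}h(\x_+,\y)+\eta\nabla^2_{\x\x}f(\x_+,\y)]^{-1}$ (and analogously for $g_2$). This inverse exists because relative bi-smoothness (\Cref{defi:adative:lipschitz:coordinate}) together with $\eta<1/\max\{L_1,L_2\}$ gives $\nabla^2_{\x\x}h+\eta\nabla^2_{\x\x}f\succeq(1-\eta L_1)\nabla^2_{\x\x}h\succ 0$. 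Since $Dg_1$ and $Dg_2$ are then block-triangular with invertible diagonal blocks, the hypotheses $g\in\calC^1$ and $\det(Dg)\neq 0$ follow at once from the resulting continuous closed-form expression for $Dg$.

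The heart of the argument is the unstable-fixed-point condition. At a strict saddle $(\x^\star,\y^\star)$ one has $\x_+=\x=\x^\star$ and $\y_+=\y=\y^\star$, so the cross-Hessian differences of $h$ cancel and, in the notation of \eqref{def:f}, a short computation gives the factored form $Dg(\x^\star,\y^\star)=M_1^{-1}M_2$ with block-lower-triangular $M_1$ having diagonal blocks $\tfrac1\eta\mH_1+\mF_{11},\ \tfrac1\eta\mH_2+\mF_{22}$ and $(2,1)$-block $\mF_{21}$, and block-upper-triangular $M_2$ having diagonal blocks $\tfrac1\eta\mH_1,\ \tfrac1\eta\mH_2$ and $(1,2)$-block $-\mF_{12}$. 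To prove $\Sp(Dg)>1$ it suffices to find an eigenvalue $\mu\in(0,1)$ of $Dg^{-1}=M_2^{-1}M_1$, i.e.\ to solve $\det(M_1-\mu M_2)=0$ for some $\mu\in(0,1)$. A diagonal similarity (exactly as in \Cref{thm:balsg:2}) identifies $\det(M_1-\mu M_2)$ with $\det\tilde\mJ(\mu)$, where
\[
\tilde\mJ(\mu)=\begin{bmatrix}\tfrac1\eta(1-\mu)\mH_1+\mF_{11}&\sqrt{\mu}\,\mF_{12}\\\sqrt{\mu}\,\mF_{21}&\tfrac1\eta(1-\mu)\mH_2+\mF_{22}\end{bmatrix}
\]
is symmetric (using $\mF_{12}=\mF_{21}^\top$). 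Now $\tilde\mJ(0^+)=\operatorname{diag}(\tfrac1\eta\mH_1+\mF_{11},\ \tfrac1\eta\mH_2+\mF_{22})\succ 0$ by the same relative-bi-smoothness bound, while $\tilde\mJ(1)=\nabla^2 f(\x^\star,\y^\star)$ has a negative eigenvalue because $(\x^\star,\y^\star)$ is a strict saddle; since $\lambda_{\min}(\tilde\mJ(\mu))$ is real-valued and continuous in $\mu$, it must vanish at some $\mu\in(0,1)$, giving the required $\Sp(Dg)>1$.

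I expect the main obstacle to be deriving the factored form $Dg=M_1^{-1}M_2$ correctly. Off the fixed point the four Jacobian blocks $\partial\x_+/\partial\x^\top,\ \partial\x_+/\partial\y^\top,\ \partial\y_+/\partial\x^\top,\ \partial\y_+/\partial\y^\top$ are messy, and one must check that after the chain-rule multiplication and evaluation at $(\x^\star,\y^\star)$ the terms collapse into the asymmetric placement above—namely $\mF_{11},\mF_{22}$ landing inside $M_1$ (this is precisely what the full, non-linearized subproblems of B-PAM contribute, and is what distinguishes it from B-PALM, whose diagonal blocks instead carry $\mu\mF_{ii}$) while $\mF_{12},\mF_{21}$ split between $M_1$ and $M_2$. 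One must also confirm that the symmetrizing similarity leaves the determinant unchanged and, unlike in B-PALM, verify the extra positivity $\tfrac1\eta\mH_i+\mF_{ii}\succ 0$ needed for $\tilde\mJ(0^+)\succ 0$. Beyond this bookkeeping, every step mirrors the B-PALM proof.
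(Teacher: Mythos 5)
Your proposal is correct and follows essentially the same route as the paper: the same decomposition $g=g_2\circ g_1$, the same implicit-function-theorem computation yielding the $(\nabla^2_{\x\x}h+\eta\nabla^2_{\x\x}f)^{-1}$ factors, and the same factored form of $Dg(\x^\star,\y^\star)$ (your $M_1^{-1}M_2$ is the paper's expression scaled by $1/\eta$), culminating in the same continuity-of-eigenvalues argument on a symmetrized pencil. The only cosmetic difference is that you pass to $Dg^{-1}$ and locate $\mu\in(0,1)$ (mirroring the paper's B-PALM proof), whereas the paper's B-PAM proof works with $Dg$ directly and finds $\mu>1$; these are trivially equivalent, and your explicit check that $\tfrac1\eta\mH_i+\mF_{ii}\succ0$ via relative bi-smoothness is exactly the positivity the paper also relies on.
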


\begin{proof}
As we have already shown that B-PAM converges to a first-order critical point, it remains to use \Cref{thm:jason} to show that this first-order critical point will not be a strict saddle for almost sure. To apply \Cref{thm:jason}, we rewrite  B-PAM   \eqref{eqn:new:BPAM} as
\begin{equation}
\begin{aligned}
(\x_{+},\y)&=g_{1}(\x,\y),\\
(\x,\y_{+})&=g_{2}(\x,\y).
\label{def:g12:als}
\end{aligned}
\end{equation}
The mapping $g_{1},~g_{2}$ are well-defined in the whole domain $\R^{n}\times \R^{m}$,  in view of strong convexity and coercivity of the objective function in \eqref{eqn:new:BPAM}.
Then B-PAM \eqref{eqn:new:BPAM} can written as
\begin{align}
(\x_{k},\y_{k})=g(\x_{k-1},\y_{k-1}),\quad\tmop{where}~g:=g_{2}\circ g_{1}.
\label{def:g:bals}
\end{align}
By the chain rule, to compute the  the Jacobian matrix $Dg$, we need to compute $Dg_{1}$ and $Dg_{2}$, respectively.
We first compute $Dg_{1}$. First of all, by the definition of $g_1$, i.e., $(\x_{+},\y)=g_{1}(\x,\y)$, we have
\[
Dg_{1}(\x,\y)=\begin{bmatrix} {\partial \x_{+}}/{\partial \x^{\top}} & {\partial \x_{+}}/{\partial \y^{\top}} \\
\zero & \eye_{m}  \end{bmatrix}.
\]
Second, to compute ${\partial \x_{+}}/{\partial \x^{\top}}$ and ${\partial \x_{+}}/{\partial \y^{\top}}$, we  apply the implicit function theorem to the  zero-gradient condition \eqref{eqn:first:bals}
\begin{align*}
\nabla_{\x} h(\x_{+},\y)=\nabla_{\x} h(\x,\y)-\eta \nabla_{\x}f(\x_{+},\y),
\end{align*}
and obtain that
\begin{align*}
\left(\nabla^{2}_{\x\x}h(\x_{+},\y)+\eta\nabla^2_{\x\x}f(\x_{+},\y)\right)\frac{\partial \x_{+}}{\partial \x^{\top}}
&=\nabla^{2}_{\x\x}h(\x,\y),
\\
\left(\nabla^{2}_{\x\x}h(\x_{+},\y)+\eta\nabla^2_{\x\x}f(\x_{+},\y)\right)\frac{\partial \x_{+}}{\partial \y^{\top}}
&=\nabla^2_{\x\y}h(\x,\y)-\nabla^2_{\x\y}h(\x_{+},\y)  \!-\!\eta\nabla^2_{\x\y}f(\x_{+},\y)
\end{align*}
Then in view of the strong bi-convexity of  $h$, we can further simply the above equations as
\begin{align*}
\frac{\partial \x_{+}}{\partial \x^{\top}}
&\!=\!\!\left(\nabla^{2}_{\x\x}h(\x_{+}\!,\y)\!+\!\eta\nabla^2_{\x\x}f(\x_{+},\y)\right)^{\!-1}\!\nabla^{2}_{\x\x}h(\x,\y),
\\
\frac{\partial \x_{+}}{\partial \y^{\top}}
&\!=\!\!\left(\nabla^{2}_{\x\x}h(\x_{+}\!,\y)\!+\!\eta\nabla^2_{\x\x}f(\x_{+},\y)\right)^{\!-1}\!\!
(\nabla^2_{\x\y}h(\x,\y)\!-\!\nabla^2_{\x\y}h(\x_{+},\y)  \!-\!\eta\nabla^2_{\x\y}f(\x_{+},\y)).
\end{align*}
Similar arguments can be used to compute ${\partial \y_{+}}/{\partial \x^{\top}}$ and ${\partial \y_{+}}/{\partial \y^{\top}}$.  As a result, we have
\begin{equation}
\begin{aligned}
Dg_{1}(\x,\y)
=&\begin{bmatrix}
\left(\nabla^{2}_{\x\x}h(\x_{+},\y)+\eta\nabla^2_{\x\x}f(\x_{+},\y)\right)^{-1} & \\  &\ \eye_{m}
\end{bmatrix}
\\
&
\begin{bmatrix}
\nabla^{2}_{\x\x}h(\x,\!\y) &
\nabla^2_{\x\y}h(\x,\!\y)-\nabla^2_{\x\y}h(\x_{+},\y) -\eta\nabla^2_{\x\y}f(\x_{+},\y)
\\
\zero &\eye_{m}
\end{bmatrix},
\\
Dg_{2}(\x,\y)
=&
\begin{bmatrix}
\eye_{n} &
\\
&
\left(\nabla^{2}_{\y\y}h(\x,\y_{+})+\eta\nabla^2_{\y\y}f(\x,\y_{+})\right)^{-1}
\end{bmatrix}
\\
&
\begin{bmatrix}
\eye_{n} & \zero \\
\nabla^2_{\y\x}h(\x,\y)-\nabla^2_{\y\x}h(\x,\y_{+})  -\eta\nabla^2_{\y\x}f(\x,\y_+)
&
\nabla^{2}_{\y\y}h(\x,\y)
\end{bmatrix}.
\label{Dg:als}
\end{aligned}
\end{equation}
Therefore, using chain rule, we have
\begin{equation}
\begin{aligned}
Dg(\x,\y)=Dg_{2}(g_{1}(\x,\y))Dg_{1}(\x,\y).
\label{def:Dg:bals}
\end{aligned}
\end{equation}

Now by \Cref{thm:jason}, to show the mapping $g$ can almost surely avoid the strict saddles,  it suffices to show the following conditions:
\paragraph{\bf (1) Showing  $g$ is a continuously differentiable mapping}
This is because $D g$ in \eqref{def:Dg:bals} is continuous by the implicit function theorem.

\smallskip\paragraph{\bf (2) Showing $\det(D g)\neq 0$ in the whole domain}
Using the chain rule, we have $Dg=Dg_{2}Dg_{1}$, where  $Dg_{1}$ and $Dg_{2}$ are square matrices defined in  \eqref{Dg:als}.  First, $Dg_{1}$ is nonsingular in the whole domain in view of its upper-triangular block structure and the strong bi-convexity of $h$. Similarly, $Dg_2$ is also nonsingular in the entire domain.  This completes the proof.

\smallskip\paragraph{\bf (3) Showing any strict saddle is an unstable fixed point} First, since $(\x_+,\y_+)=(\x^\star,\y^\star),(\x,\y)=(\x^\star,\y^\star)$ satisfy the optimality conditions of both $g_1$ and $g_2$ (cf. \eqref{B-PAM:1} and \eqref{B-PAM:2}) and $g=g_2\circ g_1$ is well-defined by \Cref{lem:general:decrease:coordinate}, $(\x^\star,\y^\star)$ is a fixed point of $g$. 

It remains to show $\Sp(Dg(\x^{\star},\y^{\star}))>1$. From \eqref{def:Dg:bals}, we have
\begin{align*}
&Dg(\x^{\star},\y^{\star})
=Dg_{2}(\x^{\star},\y^{\star})Dg_{1}(\x^{\star},\y^{\star})\\
&=
\begin{bmatrix}
\eye_{n} &
\\
&
\left(\mH_2+\eta\mF_{22}\right)^{-1}
\end{bmatrix}
\begin{bmatrix}
\eye_{n} & \zero \\
-\eta\mF_{21}
&
\mH_2
\end{bmatrix}
\begin{bmatrix}
\left(\mH_1+\eta\mF_{11}\right)^{-1} &
\\
& \eye_{m}
\end{bmatrix}
\begin{bmatrix}
\mH_1
&
-\eta\mF_{12}
\\
\zero & \eye_{m}
\end{bmatrix}
\\
&=
\begin{bmatrix}
(\mH_1+\eta\mF_{11})^{-1} & \zero
\\
(\mH_1+\eta\mF_{11})^{-1} (-\eta\mF_{21})(\mH_2+\eta\mF_{22})^{-1}
& (\mH_2+\eta\mF_{22})^{-1}\mH_2
\end{bmatrix}
\begin{bmatrix}
\mH_1 & -\eta \mF_{12}\\
& \eye_{m}
\end{bmatrix}
\\
&=
\begin{bmatrix}
(\mH_1+\eta\mF_{11})^{\!-1} & \zero
\\
(\mH_1+\eta\mF_{11})^{\!-1} (\!-\eta\mF_{21})(\mH_2+\eta\mF_{22})^{\!-1}
& (\mH_2\!+\!\eta\mF_{22})^{-1}
\end{bmatrix}
\begin{bmatrix}
\eye_{n} &\\
& \mH_{2}
\end{bmatrix}
\begin{bmatrix}
\mH_1 & \!-\eta \mF_{12}\\
& \eye_{m}
\end{bmatrix}
\\
&=
\begin{bmatrix}
\mH_{1}+\eta\mF_{11} & \zero\\
\eta\mF_{21} & \mH_{2}+\eta\mF_{22}
\end{bmatrix}^{-1}
\begin{bmatrix}
\mH_{1} & -\eta\mF_{12}\\
& \mH_{2}
\end{bmatrix}
\end{align*}
with $\mF_{ij}$, $\mH_1,~\mH_2$ defined in \eqref{def:f}. As previously, it suffices to show that $\det(Dg(\x^{\star},\y^{\star})-\mu\eye)=0$ with  $|\mu|>1$. Towards this end, we observe that
\begin{align*}
&\det(Dg(\x^{\star},\y^{\star})-\mu\eye)=0\\
\iff &
\det\left(\begin{bmatrix}
\mH_{1}+\eta\mF_{11} & \zero\\
\eta\mF_{21} & \mH_{2}+\eta\mF_{22}
\end{bmatrix}^{-1}
\begin{bmatrix}
\mH_{1} & -\eta\mF_{12}\\
& \mH_{2}
\end{bmatrix}-\mu\eye\right)=0
\\
\iff&
\det\left(
\begin{bmatrix}
\mH_{1} & -\eta\mF_{12}\\
& \mH_{2}
\end{bmatrix}-\mu\begin{bmatrix}
\mH_{1}+\eta\mF_{11} & \zero\\
\eta\mF_{21} & \mH_{2}+\eta\mF_{22}
\end{bmatrix}\right)=0
\\
\iff&
\det\left(
\begin{bmatrix}
(\mu-1)\mH_{1}+\mu\eta\mF_{11} & \eta\mF_{12}\\
\mu\eta\mF_{21}& (\mu-1)\mH_{2}+\mu\eta\mF_{22}
\end{bmatrix}\right)=0
\\
\iff&
\det\left(
\begin{bmatrix}
\eye_{n} \!\!\!& \\
\!\!\!& \sqrt{\mu} \eye_{m}
\end{bmatrix}
\begin{bmatrix}
(\mu\!-\!1)\mH_{1}\!+\!\mu\eta\mF_{11} \!\!\!&\sqrt{\mu}\eta\mF_{12}\\
\sqrt{\mu}\eta\mF_{21} \!\!\!& (\mu\!-\!1)\mH_{2}\!+\!\mu\eta\mF_{22}
\end{bmatrix}
\begin{bmatrix}
\eye_{n} \!\!\!& \\
\!\!\!& \sqrt{\mu} \eye_{m}
\end{bmatrix}^{-1}\right)=0
\\
\iff&
\det\Bigg(
\underbrace{\begin{bmatrix}
(\mu-1)\mH_{1}+\mu\eta\mF_{11} &\sqrt{\mu}\eta\mF_{12}\\
\sqrt{\mu}\eta\mF_{21}& (\mu-1)\mH_{2}+\mu\eta\mF_{22}
\end{bmatrix}}_{\mJ(\mu)}\Bigg)=0,
\end{align*}
which holds if and only if  $\mJ(\mu)$ has a zero eigenvalue for some $|\mu|>1$.
For this purpose, we observe $\mJ(\mu)$ at two particular values of $\mu$: 
\begin{align*}
\mJ(1)&=\eta
\begin{bmatrix}
\mF_{11} &\mF_{12} \\
\mF_{21} &\mF_{22}
\end{bmatrix}=\eta\nabla^{2}f(\x^{\star},\y^{\star})
,\quad
\lim\limits_{\mu\to\infty}\frac{\mJ(\mu)}{\mu}=
\begin{bmatrix}
\mH_{1}+\eta\mF_{11} &\\
& \mH_{2}+\eta\mF_{22}
\end{bmatrix}.
\end{align*}
Since $(\x^{\star},\y^{\star})$ is a strict saddle of $f$, we have $\lambda_{\min}(\mJ(1))<0$, and since $\mH_{1}\pm\eta\mF_{11}$ and $\mH_{2}\pm\eta\mF_{22}$ are positive definite matrices (by assumptions), we also have $\lim_{\mu}\lambda_{\min}(\mJ(\mu))>0$. Further, note that $\mJ(\mu)$ is a
symmetric and continuous (w.r.t. $\mu$) matrix, which implies that the eigenvalues of $\mJ(\mu)$ are real-valued and continuous w.r.t. $\mu$ (cf. \cite[Theorem 5.1]{kato2013perturbation}). Then a continuity argument immediately complete the proof that $\mJ(\mu)$ has a zero eigenvalue for some $\mu>1$ and hence $|\mu|>1$.
\end{proof}

\section{Closed-form Implementations for Fourth-degree Polynomial Functions} \label{sec:implementations}

In this section, we provide closed-form solutions for efficiently implementing  B-GD and B-PALM. As the development of the closed-form solution for  B-PPM  and B-PAM rely on the specific form of the objective function,  we mainly focus on  B-GD  and B-PALM. Also, since most interesting problems in machine learning or signal processing (such as matrix PCA, matrix sensing and matrix completion, etc.) admit a fourth-degree (or (2,2)th-degree) polynomial objective function, let us focus on these two general cases of objective functions. We remark that it is not difficult to consider an arbitrary polynomial objective function. The only issue is that there might be no closed-form solution since the zero-gradient equation is a high-degree polynomial equation, but one can nevertheless solve the optimality condition using line-search algorithms.

\subsection{Closed-form Implementations for B-GD} By \Cref{lem:h:exist}, set $h(\x)=\frac{1}{4}\|\x\|_2^4+\frac{1}{2}\|\x\|_2^2+1$ to achieve the second-order convergence for a fourth-degree polynomial function.
In this case, B-GD is a smart version of ``GD" equipped with the ability of line search algorithm that can adaptively choose the step size according to the norm of the current iterate. Note that such a line-search strategy is much more efficient than the traditional line search method, as it can automatically choose the step size with a closed-form solution. 
\begin{theorem}\label{pro:bgd}
Suppose $f(\x)$ is any fourth-degree polynomial and set $h(\x)=\frac{1}{4}\|\x\|_2^4+\frac{1}{2}\|\x\|_2^2+1$. Then B-GD has the following closed-form implementation:
\begin{align}
\x_{k}=
\Pi\left(\left(\|\x_{k-1}\|_2^2 + 1\right)\x_{k-1} -\eta \nabla f(\x_{k-1})\right),
\label{eqn:bgd:update}
\end{align}
where  
\begin{align}
\Pi(\x):= \x   \frac{\tau(\|\x\|_2)}{\|\x\|_2}  
\label{def:Pi}
\end{align}
 with $\tau(\cdot)$  defined in \eqref{def:tau}.
\end{theorem}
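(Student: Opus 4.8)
The plan is to start from the first-order optimality condition of the B-GD subproblem and exploit the special radial structure of the chosen $h$. The objective in \eqref{eqn:bgd1} is $\langle\nabla f(\x_{k-1}),\x-\x_{k-1}\rangle+\frac{1}{\eta}D_h(\x,\x_{k-1})$, which equals $\frac{1}{\eta}h(\x)$ plus an affine function of $\x$; since $h$ is strongly convex, this subproblem is strongly convex and (as already guaranteed by \Cref{lem:general:decrease}) has a unique minimizer $\x_k$ characterized by the stationarity condition \eqref{opt:bgd:1}, i.e.\ $\nabla h(\x_k)=\nabla h(\x_{k-1})-\eta\nabla f(\x_{k-1})$. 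The first step is therefore simply to compute $\nabla h$ for $h(\x)=\frac14\|\x\|_2^4+\frac12\|\x\|_2^2+1$, which yields the clean radial gradient $\nabla h(\x)=(\|\x\|_2^2+1)\x$.

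Substituting this into the optimality condition turns the update into the single vector equation $(\|\x_k\|_2^2+1)\x_k=\vz$, where $\vz:=(\|\x_{k-1}\|_2^2+1)\x_{k-1}-\eta\nabla f(\x_{k-1})$ is exactly the argument of $\Pi$ in \eqref{eqn:bgd:update}. The key observation is that the scalar factor $\|\x_k\|_2^2+1$ is strictly positive, so $\x_k$ must be a positive multiple of $\vz$; in particular $\x_k$ and $\vz$ are parallel and point in the same direction, and $\x_k=\vzero$ exactly when $\vz=\vzero$. Thus the direction of $\x_k$ is pinned down as $\vz/\|\vz\|_2$, and only its length remains to be found.

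It then suffices to determine the norm $r:=\|\x_k\|_2$. Taking norms on both sides of $(\|\x_k\|_2^2+1)\x_k=\vz$ gives the scalar cubic $r^3+r=\|\vz\|_2$. The main technical point—indeed essentially the only nontrivial step—is to solve this cubic and to confirm its root is well-defined: since $r\mapsto r^3+r$ is strictly increasing on $[0,\infty)$ with range $[0,\infty)$, there is a unique nonnegative root, which I denote $\tau(\|\vz\|_2)$ and which admits the closed-form Cardano expression recorded in \eqref{def:tau}. Reconstructing $\x_k$ from its direction and its length gives $\x_k=\vz\,\tau(\|\vz\|_2)/\|\vz\|_2=\Pi(\vz)$, which is precisely \eqref{eqn:bgd:update}. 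The only edge case needing separate mention is $\vz=\vzero$, where the cubic forces $r=0$ and hence $\x_k=\vzero$, consistent with the convention adopted for $\Pi$ in \eqref{def:Pi}.
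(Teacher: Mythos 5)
Your proposal is correct and follows essentially the same route as the paper: compute $\nabla h(\x)=(\|\x\|_2^2+1)\x$, substitute into the stationarity condition \eqref{opt:bgd:1}, reduce to the scalar cubic $r^3+r=\|\vz\|_2$, and invoke \Cref{lem:cubic}. You simply spell out the direction/norm decomposition and the $\vz=\vzero$ edge case more explicitly than the paper does, which is a welcome addition rather than a deviation.
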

To prove this result, we require the following lemma to solve a third-degree polynomial equation.
\begin{lem}
For any $a\ge0$, the cubic polynomial
$
 t^3 + t  = a
$
has a unique  solution
\begin{align}
\tau(a):= 
\frac{\sqrt[3]{2} \left(\sqrt{81 a^2+12}+9 a\right)^{2/3}-2 \sqrt[3]{3}}{6^{2/3} \sqrt[3]{\sqrt{81 a^2+12}+9 a}}
\label{def:tau}.
\end{align}
\label{lem:cubic}
\end{lem}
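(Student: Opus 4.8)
The plan is to treat this as a pure algebra exercise in two stages: first establish existence and uniqueness by a monotonicity argument, then produce the closed form via Cardano's formula and simplify it down to the stated expression. For existence and uniqueness I would set $p(t) = t^3 + t$ and note that $p'(t) = 3t^2 + 1 > 0$ for all $t \in \R$, so $p$ is strictly increasing and hence injective; since $p$ is continuous with $p(t) \to \pm\infty$ as $t \to \pm\infty$, it is a bijection of $\R$ onto $\R$. Thus for every $a$ (in particular every $a \ge 0$) the equation $t^3 + t = a$ has exactly one real solution, and because $p(0) = 0$ this solution is nonnegative. This disposes of the ``unique solution'' claim cheaply and independently of the explicit formula.

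To obtain the explicit value, I would recognize $t^3 + t - a = 0$ as an already-depressed cubic with coefficients $p = 1$, $q = -a$, whose Cardano discriminant quantity $q^2/4 + p^3/27 = a^2/4 + 1/27$ is strictly positive. Hence Cardano's formula delivers the single real root as $t = u + v$ with $u = \sqrt[3]{a/2 + \sqrt{a^2/4 + 1/27}}$ and $v = \sqrt[3]{a/2 - \sqrt{a^2/4 + 1/27}}$, taking real cube roots (legitimate since $a \ge 0$ makes the first radicand positive). Using the relation $uv = \sqrt[3]{(a/2)^2 - (a^2/4 + 1/27)} = -1/3$, I would collapse the two-radical root into the single-radical form $t = u - 1/(3u)$, which is the natural starting point for matching $\tau(a)$.

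The remaining step, which is the main obstacle only in the sense of being the most tedious, is the purely algebraic simplification of $u - 1/(3u)$ into the stated shape. Here I would first clear the denominators inside the radical using the identity $a^2/4 + 1/27 = (81a^2 + 12)/324$, so that $\sqrt{a^2/4 + 1/27} = \sqrt{81a^2+12}/18$ and $a/2 + \sqrt{a^2/4+1/27} = (9a + \sqrt{81a^2+12})/18$. Setting $w := 9a + \sqrt{81a^2+12}$ gives $u = \sqrt[3]{w/18}$, and combining $t = w^{1/3}/18^{1/3} - 18^{1/3}/(3w^{1/3})$ over the common denominator $3\cdot 18^{1/3} w^{1/3}$ yields $t = (3w^{2/3} - 18^{2/3})/(3\cdot 18^{1/3} w^{1/3})$. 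Substituting $18^{1/3} = 2^{1/3} 3^{2/3}$ and $18^{2/3} = 2^{2/3}\cdot 3\cdot 3^{1/3}$, cancelling the common factor of $3$, and finally multiplying numerator and denominator by $2^{1/3}$ collapses the expression to $(\sqrt[3]{2}\,w^{2/3} - 2\sqrt[3]{3})/(6^{2/3} w^{1/3})$, which is exactly $\tau(a)$ as defined in \eqref{def:tau}. The only genuine care required is bookkeeping the real cube-root branches and verifying each radical identity, rather than any conceptual difficulty. As an alternative that avoids Cardano's formula altogether, one could instead substitute the stated $\tau(a)$ directly into $t^3 + t$ and verify it equals $a$; combined with the monotonicity argument for uniqueness, this trades the derivation for a longer but entirely mechanical check.
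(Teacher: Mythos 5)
Your proposal is correct. The uniqueness half is identical to the paper's: both of you observe that $t\mapsto t^3+t$ has derivative $3t^2+1>0$ and is therefore a strictly increasing bijection of $\R$. Where you differ is in how the closed form is handled. The paper simply asserts that ``it can be verified that $\tau(a)$ is a solution by direct computations,'' i.e.\ it takes the verification route that you mention only as a fallback at the end of your proposal. Your primary route instead \emph{derives} $\tau(a)$ from Cardano's formula for the depressed cubic with $p=1$, $q=-a$, using $uv=-1/3$ to collapse the two-radical root $u+v$ into the single-radical form $u-1/(3u)$ and then normalizing the radicands to reach the exact expression in \eqref{def:tau}; I checked the arithmetic ($a^2/4+1/27=(81a^2+12)/324$, $18^{1/3}=2^{1/3}3^{2/3}$, and the final clearing by $2^{1/3}$) and it is right. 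The derivation buys you an explanation of where the otherwise opaque formula comes from, at the cost of some branch bookkeeping for the real cube roots; the paper's verification is shorter but leaves the formula unmotivated. Either is a complete proof, so there is no gap here.
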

\begin{proof}[Proof of \Cref{lem:cubic}]
First it can be verified that $\tau(a)$ is a solution by direct computations and the uniqueness follows from  the strictly increasing property of the function $t^3+t$.
\end{proof} 
\begin{proof}[Proof of \Cref{pro:bgd}] 
First plug $\nabla h(\x) = (\|\x\|_2^2 + 1)\x$  in the zero-gradient condition of \eqref{eqn:bgd1}:  $ \eta \nabla  f(\x_{k-1}) - \nabla h(\x_{k-1}) + \nabla h(\x_{k}) = \vzero$, which will be reduced to a equation of the type $t^3+t=a$. Then the proof follows from \Cref{lem:cubic}.
\end{proof}

\subsection{Closed-form Implementations for B-PALM}
Let us now consider a more difficult case for a bi-variable polynomial function of $(4,4)$ degree. By \Cref{lem:h:exist}, it is sufficient to set $h(\x,\y)=(\frac{1}{4}\|\x\|_2^4+\frac{1}{2}\|\x\|_2^2+1)(\frac{1}{4}\|\y\|_2^4+\frac{1}{2}\|\y\|_2^2+1)$ to achieve the second-order convergence. Similarly, the B-PALM reduces to an alternating ``GD" with equipped the line search ability for each subproblem. 
\begin{theorem}
Suppose $f(\x,\y)$ is any $(4,4)$-degree polynomial and set
$
h(\x,\y)=(\frac{1}{4}\|\x\|_2^4+\frac{1}{2}\|\x\|_2^2+1)(\frac{1}{4}\|\y\|_2^4+\frac{1}{2}\|\y\|_2^2+1).
$
Then B-PALM has closed-form implementations:
\begin{equation}
\label{closed:form:fxy:4}
\begin{aligned}
\x_{k}=&\Pi\!\left(\!\left(\|\x_{k-1}\|_2^2 \!+\! 1\right)\x_{k-1} \!-\!\frac{\eta}{\left(\|\y_{k-1}\|_2^4/4\!+\!\|\y_{k-1}\|_2^2/2\!+\!1\!\right)} \nabla_{\x} f(\x_{k-1},\y_{k-1})\right), 
\\
\y_{k}=&\Pi\!\left(\!(\|\y_{k-1}\|_2^2 \!+\! 1)\y_{k-1} \!-\!\frac{\eta}{\left(\|\x_{k}\|_2^4/4\!+\!\|\x_{k}\|_2^2/2\!+\!1\right)}\nabla_{\y} f(\x_{k},\y_{k-1})\!\right) 
\end{aligned}    
\end{equation}
 with $\Pi(\cdot)$  defined in \eqref{def:Pi}. 
\label{cor:BMF:fxy4}
\end{theorem}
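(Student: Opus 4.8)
The plan is to mirror the argument for \Cref{pro:bgd}: I would write down the per-block optimality conditions of B-PALM, substitute the explicit partial gradients of the product-form $h$, observe that a positive scalar factor cancels on each block, and thereby reduce each update to a vector equation of the form $(\|\z\|_2^2+1)\z=\b$ that is solved by $\Pi(\b)$ via \Cref{lem:cubic}.

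First I would record the partial gradients. Writing $p(\x)=\frac14\|\x\|_2^4+\frac12\|\x\|_2^2+1$ and $q(\y)=\frac14\|\y\|_2^4+\frac12\|\y\|_2^2+1$ so that $h(\x,\y)=p(\x)q(\y)$, a direct computation gives $\nabla_{\x}h(\x,\y)=q(\y)(\|\x\|_2^2+1)\x$ and $\nabla_{\y}h(\x,\y)=p(\x)(\|\y\|_2^2+1)\y$. For the $\x$-block, I would plug this into the optimality condition \eqref{eqn:first:balsg}, which in the recursion notation reads $\nabla_{\x}h(\x_k,\y_{k-1})=\nabla_{\x}h(\x_{k-1},\y_{k-1})-\eta\nabla_{\x}f(\x_{k-1},\y_{k-1})$. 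Since both $h$-gradient terms are evaluated at $\y=\y_{k-1}$, the common factor $q(\y_{k-1})$ appears on both sides; dividing through by it leaves $(\|\x_k\|_2^2+1)\x_k=(\|\x_{k-1}\|_2^2+1)\x_{k-1}-\frac{\eta}{q(\y_{k-1})}\nabla_{\x}f(\x_{k-1},\y_{k-1})$. Identifying the right-hand side with the vector $\b$ of \Cref{pro:bgd}, the same cubic argument (an optimal $\x_k$ must be parallel to $\b$ with magnitude solving $t^3+t=\|\b\|_2$) yields $\x_k=\Pi(\b)$, which is exactly the claimed first line of \eqref{closed:form:fxy:4}.

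The $\y$-block is handled symmetrically from \eqref{eqn:second:balsg}, now with $\x$ frozen at the already-updated $\x_k$: the factor $p(\x_k)$ is common to both $h$-gradient terms and cancels, so the update reduces to $(\|\y_k\|_2^2+1)\y_k=(\|\y_{k-1}\|_2^2+1)\y_{k-1}-\frac{\eta}{p(\x_k)}\nabla_{\y}f(\x_k,\y_{k-1})$, giving the second line of \eqref{closed:form:fxy:4} after another application of \Cref{lem:cubic}. The conceptual point---and the only thing needing care---is that the product structure of $h$ makes the bi-variable subproblems decouple into two scalar-rescaled copies of the B-GD subproblem: the partial gradient in one variable carries only a scalar multiple depending on the frozen variable, and that scalar is common to both sides of the corresponding optimality condition, so it cancels cleanly. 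I do not expect a genuine obstacle; the one routine check is that the cancelled scalars $q(\y_{k-1})$ and $p(\x_k)$ are strictly positive (indeed $\ge 1$), so the divisions are always legitimate and merely rescale the effective step size by a norm-dependent factor.
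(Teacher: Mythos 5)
Your proposal is correct and follows exactly the route the paper intends: the paper's own proof of this theorem is just the one-line remark that it is ``similar to that of \Cref{pro:bgd},'' and your write-up supplies precisely the missing details (the product form of $h$ makes $\nabla_{\x}h(\x,\y)=q(\y)(\|\x\|_2^2+1)\x$, the positive scalar $q(\y_{k-1})$ (resp.\ $p(\x_k)$) cancels from both sides of the block optimality condition, and each block reduces to $(\|\z\|_2^2+1)\z=\b$, solved by $\Pi(\b)$ via \Cref{lem:cubic}). No gaps; your observation that the cancelled scalars are at least $1$ is the right sanity check.
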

\begin{proof}
The proof of  \Cref{cor:BMF:fxy4} is similar to that of \Cref{pro:bgd}.
\end{proof}

When the objective function $f(\x,\y)$ is a $(2,2)$th-degree polynomial, we can simplify the above closed-form updating formula. Remarkably,  most interesting problems in machine learning or signal processing admit a $(2,2)$th-degree polynomial objective function, e.g.,  matrix PCA, matrix sensing, matrix  completion, etc.
\begin{theorem}
Suppose $f(\x,\y)$ is any $(2,2)$th-degree polynomial and  set
$
h(\x,\y)=\left(\frac{1}{2}\|\x\|_2^2+1\right)\left(\frac{1}{2}\|\y\|_2^2+1\right).
$
Then  B-PALM has closed-form implementations:
\begin{equation}
\label{closed:form:fxy:2}
\begin{aligned}
\x_{k}=&\x_{k-1}-\frac{\eta}{\|\y_{k-1}\|_2^2/2+1}\nabla_{\x}f(\x,^{k-1},\y_{k-1}),
\\
\y_{k}=&\y_{k-1}-\frac{\eta}{\|\x_{k}\|_2^2/2+1}\nabla_{\y}f(\x_{k},\y_{k-1})
\end{aligned}    
\end{equation}
 with $\Pi(\cdot)$  defined in \eqref{def:Pi}.
\label{pro:bgd4}
\end{theorem}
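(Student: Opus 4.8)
The plan is to reduce each of the two B-PALM subproblems in \eqref{update:gd:coordinate} to its zero-gradient optimality condition and then solve those conditions in closed form. The decisive simplification, relative to the $(4,4)$-degree case of \Cref{cor:BMF:fxy4}, is that for the chosen $(2,2)$-degree $h$ the block gradients $\nabla_{\x}h$ and $\nabla_{\y}h$ are \emph{linear} in the corresponding variable; consequently each optimality condition is a linear equation and no cubic solve (hence no use of $\Pi$ from \eqref{def:Pi} and no appeal to \Cref{lem:cubic}) is required.

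Concretely, I would start from the optimality conditions of the two updates, already recorded as \eqref{eqn:first:balsg} and \eqref{eqn:second:balsg}, which in the present indexing read
\[
\nabla_{\x}h(\x_{k},\y_{k-1})=\nabla_{\x}h(\x_{k-1},\y_{k-1})-\eta\,\nabla_{\x}f(\x_{k-1},\y_{k-1}),
\]
\[
\nabla_{\y}h(\x_{k},\y_{k})=\nabla_{\y}h(\x_{k},\y_{k-1})-\eta\,\nabla_{\y}f(\x_{k},\y_{k-1}).
\]
Next I would compute the block gradients of $h(\x,\y)=(\frac12\|\x\|_2^2+1)(\frac12\|\y\|_2^2+1)$, namely $\nabla_{\x}h(\x,\y)=(\frac12\|\y\|_2^2+1)\,\x$ and $\nabla_{\y}h(\x,\y)=(\frac12\|\x\|_2^2+1)\,\y$, each a strictly positive scalar multiple of the active variable. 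Substituting the $\x$-gradient into the first condition, the common factor $\frac12\|\y_{k-1}\|_2^2+1\ge 1$ divides out and delivers the first line of \eqref{closed:form:fxy:2}; the identical manipulation on the second condition, dividing by $\frac12\|\x_{k}\|_2^2+1\ge 1$, delivers the second line.

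I expect no genuine obstacle here---the entire content is that linearity of $\nabla h$ in each block collapses the update to an explicit division. The only item worth a sentence is well-definedness: each subproblem is the sum of a linear term and $\frac1\eta D_h^i$, which is strongly convex by the strong bi-convexity of $h$, so the stationary point computed above is the unique global minimizer, in agreement with \Cref{lem:general:decrease:coordinate}. (The reference to $\Pi$ in the statement is vestigial, inherited from the $(4,4)$-degree \Cref{cor:BMF:fxy4}, and plays no role in the $(2,2)$-degree formulas.)
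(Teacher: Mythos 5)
Your proposal is correct and follows essentially the same route as the paper's own proof: write the zero-gradient conditions of the two B-PALM subproblems, substitute $\nabla_{\x}h(\x,\y)=(\tfrac12\|\y\|_2^2+1)\x$ and $\nabla_{\y}h(\x,\y)=(\tfrac12\|\x\|_2^2+1)\y$, and divide by the positive scalar factors. Your added remarks (well-definedness from strong bi-convexity, and the observation that the reference to $\Pi$ in the statement is vestigial since the updates are linear rather than cubic) are both accurate, and you even correct a small typo in the paper's second optimality condition, which should involve $\nabla_{\y}f(\x_{k},\y_{k-1})$ rather than $\nabla_{\y}f(\x_{k},\y_{k})$.
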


\begin{proof}[Proof of \Cref{pro:bgd4}]
The proof directly follows from the zero-gradient condition of \eqref{update:gd:coordinate} in B-PALM:
\begin{align*}
\eta\nabla_{\x}f(\x_{k-1},\y_{k-1})+ \nabla_{\x} h(\x_{k},\y_{k-1})- \nabla_{\x} h(\x_{k-1},\y_{k-1})    &=0,
\\
\eta\nabla_{\y}f(\x_{k},\y_{k})+ \nabla_{\y} h(\x_{k},\y_{k})- \nabla_{\x} h(\x_{k},\y_{k-1})    &=0
\end{align*}    
with $\nabla_{\vx}h(\vx,\vy)=\left(\frac{1}{2}\|\y\|_2^2+1\right)\x$ and
$\nabla_{\vy}h(\vx,\vy)=\left(\frac{1}{2}\|\x\|_2^2+1\right)\y$.
\end{proof}

\section{Numerical Experiments} In this section, we test the proposed Bregman-divergence based algorithms by comparing with the traditional Euclidean-distance based methods (GD and PALM) on the following nuclear-norm regularized and rank-constrained optimization problem for both symmetric case and nonsymmetric case:
\begin{align}
\mX^\star_r=\argmin_{\mX\in\mathbb{S}_+^n~\tmop{or}~\mX\in\R^{n\times m}}\frac{1}{2}\|\mX-\mA\|_F^2+\lambda\|\mX\|_*~~\st~\rank(\mX)\le r
\label{eqn:pca}
\end{align}
The global optimal solution $\mX_r^\star$ can be obtained by a combination of a soft thresholding and a best rank-$r$ approximation (and an additional projection to the symmetric positive semidefinite cone $\mathbb{S}_{+}^n$ for the symmetric case):
\begin{align}
\mX_r^\star= 
\begin{cases}
\tmop{SVD}_r\left(\tmop{SoftTh}_\lambda(\tmop{Proj}_{\mathbb{S}_+^n}(\mA))\right)     & \text{for the symmetric case}
\\
 \tmop{SVD}_r\left(\tmop{SoftTh}_\lambda(\mA)\right) & \text{for the nonsymmetric case}
\end{cases}
\label{solution:svd}
\end{align}
where $\tmop{SVD}_r(\cdot)$ denotes the best rank-$r$ approximation and $\tmop{SoftTh}_\lambda(\cdot)$ is defined as the soft thresholding by first decreasing the singular values by $\lambda$ and then removing the negative ``negative" ones. Inspired by \Cref{sec:BMF}, another way to deal with the low-rank constraint is applying BMF optimization method to the original rank-constrained  problem \eqref{eqn:pca} and solving
\begin{equation}
\begin{aligned}
\minimize_{\mU\in\R^{n\times r}}f(\mU)&:=\frac{1}{2}\|\mU\mU^\top-\mA\|_F^2+\lambda \|\mU\|_F^2,\\
\minimize_{\mU\in\R^{n\times r},\mV\in\R^{m\times r}}f(\mU,\mV)&:=\frac{1}{2}\|\mU\mV^\top-\mA\|_F^2+\frac{\lambda}{2}(\|\mU\|_F^2+\|\mV\|_F^2).
\label{eqn:factorization}
\end{aligned}
\end{equation}
We remark that the proposed Bregman-divergence methods are guaranteed to globally minimize the BMF problem \eqref{eqn:factorization}. This is because it has been established in \cite{li2018non} that every second-order stationary point of \eqref{eqn:factorization} is globally optimal. Therefore, the convergence to a second-order stationary point of the proposed Bregman methods established in \Cref{cor:BMF} implies the global optimal convergence.

To implement the Bregman-divergence based algorithms, we can use the closed-form updating formulas  \Cref{pro:bgd} (for B-GD) and \Cref{pro:bgd4} (for B-PALM) to minimize $f(\mU)$ and $f(\mU,\mV)$, respectively. To be fair, we will tune step sizes of all algorithms until achieve best perforamnce. In the experiments,  we set $n=m=1000$, $r=2$, $\lambda=1$,  generate a symmetric matrix $\mA$ pointwisely i.i.d.  from  $\calN(0,1)$, and run GD, B-GD, PALM and B-PALM on the BMF problem \eqref{eqn:factorization}. To test the main advantage (i.e., robustness of initialization and step size) of the Bregman-divergence based algorithms, we will perform the two sets of experiments: one with small initialization (pointwisely from i.i.d. $\calN(0,0.1)$) and one with large initialization (pointwisely from i.i.d. $\calN(0,10)$).

From \Cref{fig:bgd:PCA}, we can see that when the initialization is small, both types of algorithms perform pretty well. However, when the initialization is large, the traditional Euclidean-distanced based methods degrade drastically in term of the convergence speed and even sometimes fail to converge to a second-order stationary point, while the Bregman-divergence based methods can efficiently converge to the global optimal solutions in both cases of initializations. 
This is because the large initialization can give rise to a very large local Lipschitz constant, which then forces GD to use a very small step size, resulting in an extremely poor algorithm efficiency. Indeed, this is one important advantage of B-GD \cite{bolte2018first}  to allow adaptive step sizes. As a contrast, the Bregman-divergence based algorithms are equipped with the strength of the line search that can adaptively choose the step size according to the norm of the current iterate. See \Cref{pro:bgd} and \Cref{pro:bgd4}.

\begin{figure}[!ht]
\begin{tabular}{cc}
\!\!\includegraphics[width=0.47\textwidth]{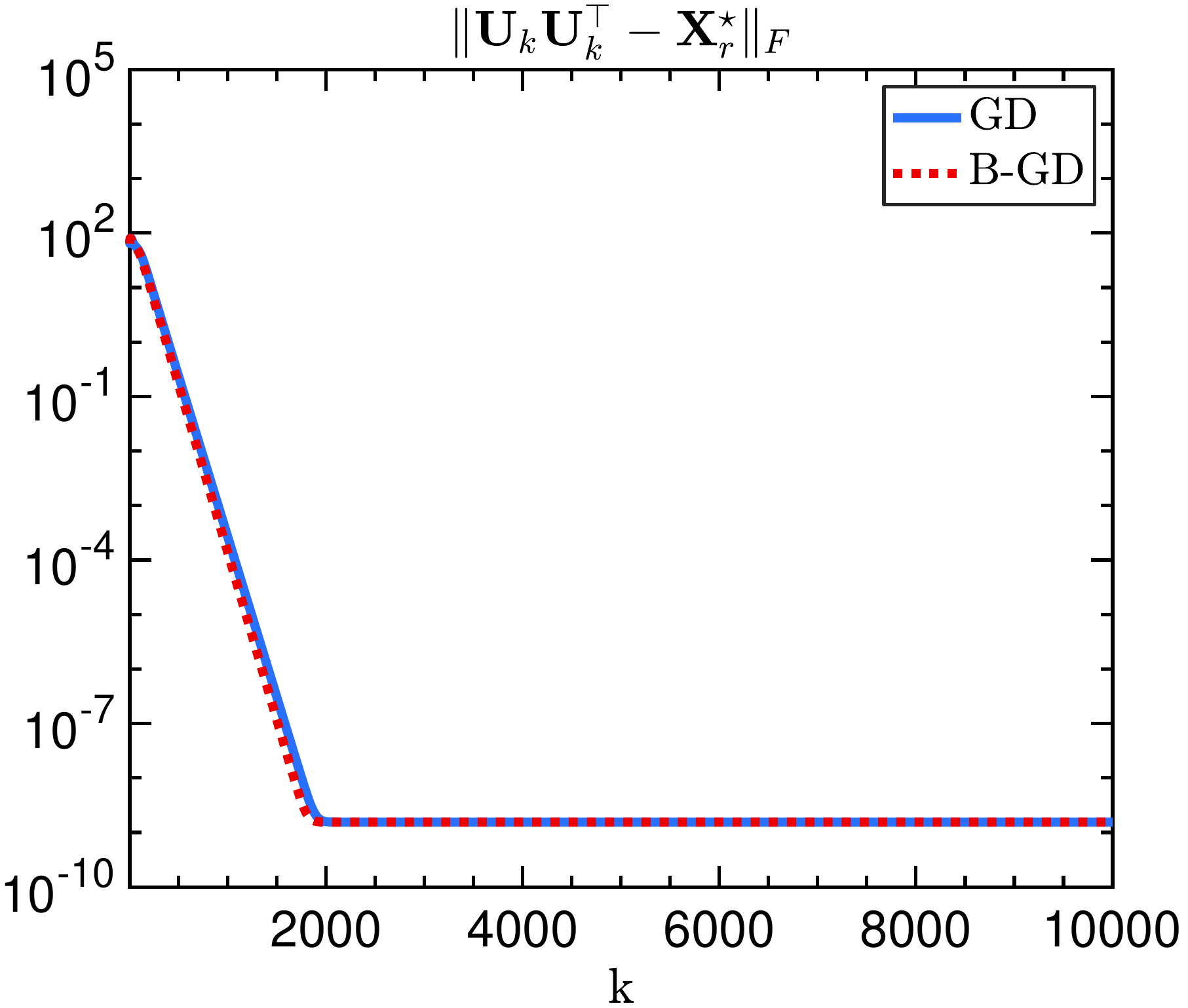}&
\includegraphics[width=0.47\textwidth]{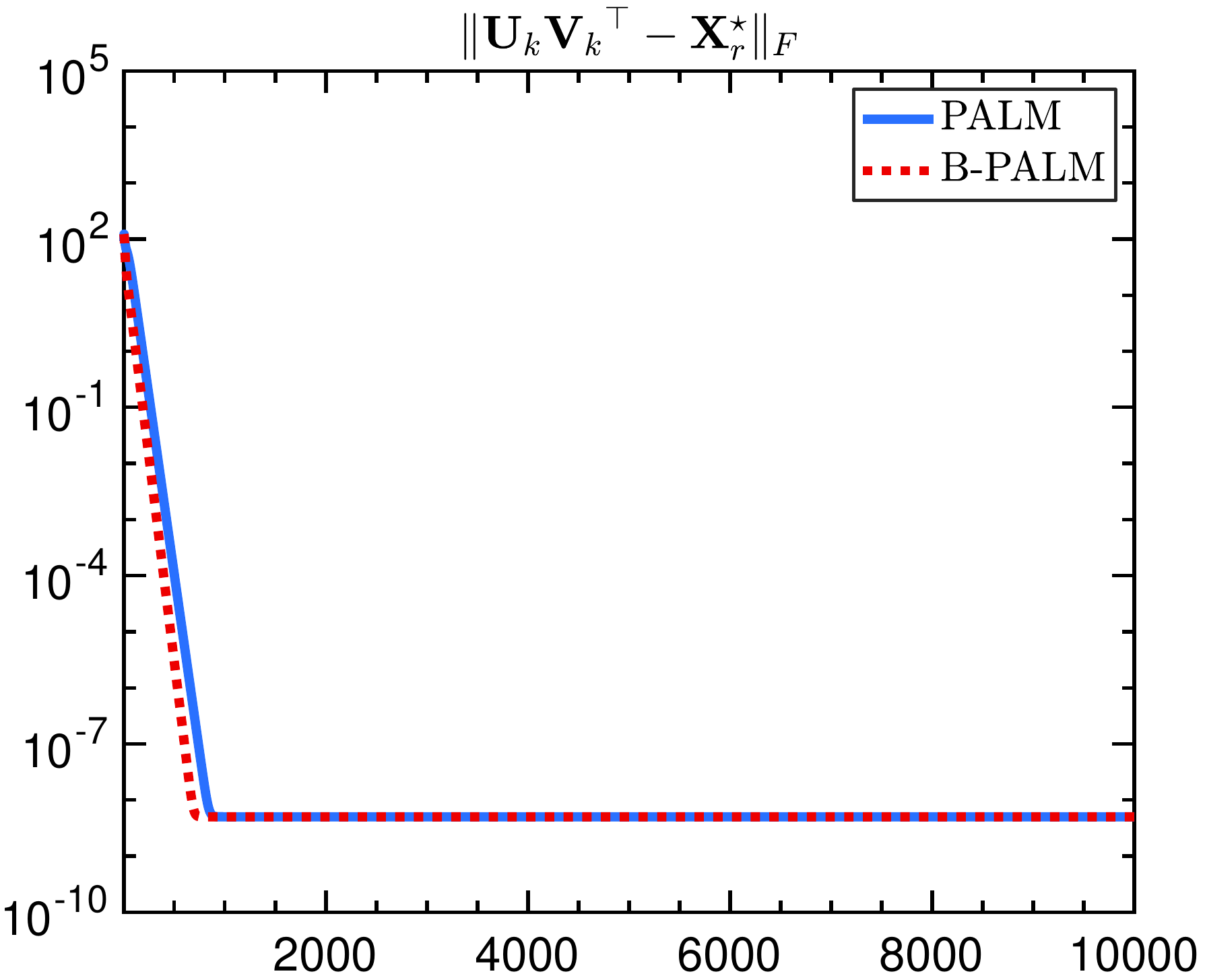}
\\
\!\!\includegraphics[width=0.47\textwidth]{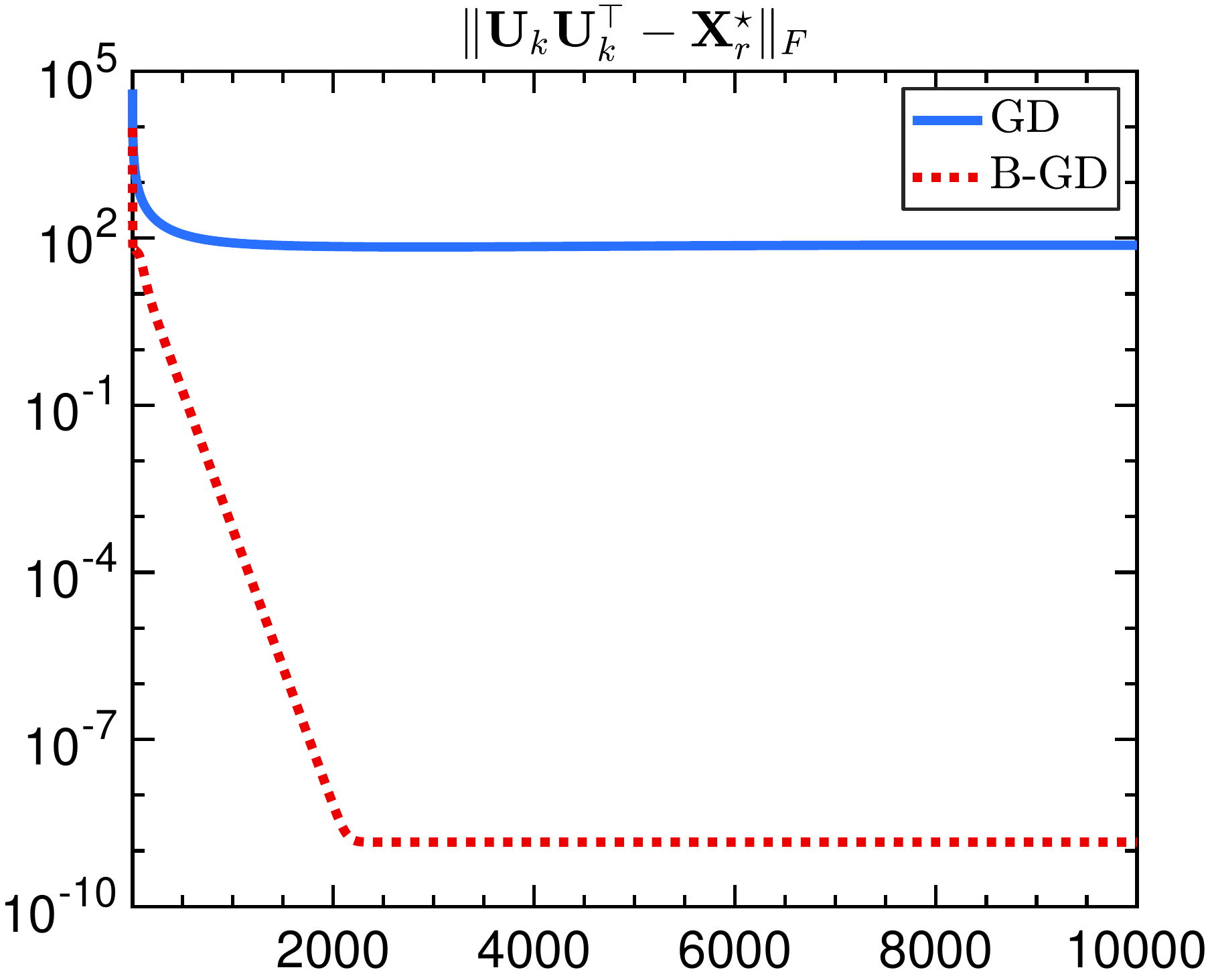}&
\includegraphics[width=0.47\textwidth]{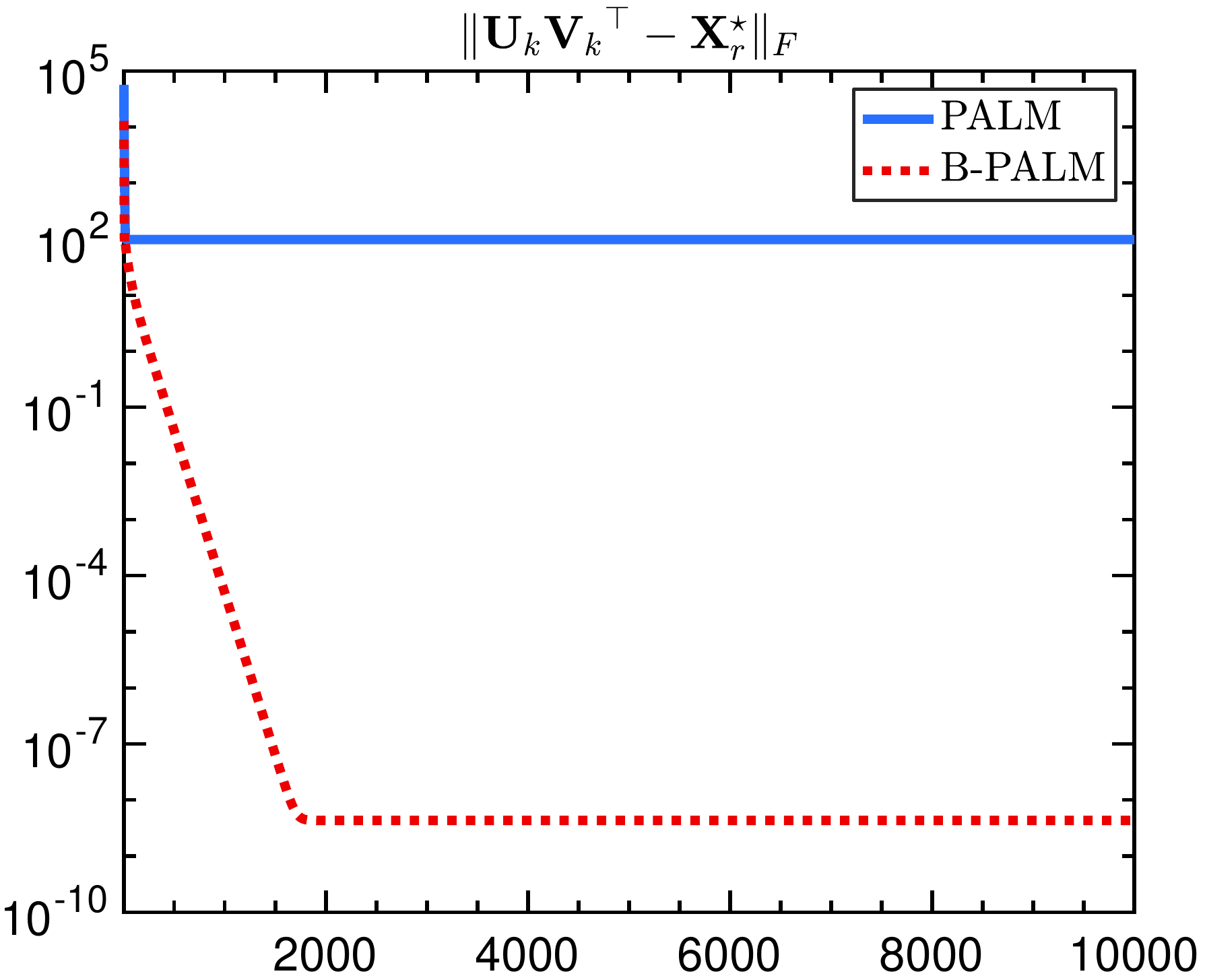}
%\\
%(a) & (b)
%\\
%\includegraphics[width=0.45\textwidth]{bregman_nonsymPCA_alsg_1_0402}&
%\includegraphics[width=0.45\textwidth]{bregman_nonsymPCA_alsg_2_0402}
%\\
%(c) & (d)
\end{tabular}
\caption{
{\bf Above Left}: GD vs B-GD with small initialization;
{\bf Above Right}: PALM vs B-PALM with small initialization. 
{\bf Below Left}: GD vs B-GD with large initialization;
{\bf Below Right}: PALM vs B-PALM with large initialization. }
\label{fig:bgd:PCA}
\end{figure}

%\begin{figure}[!ht]
%\begin{center}
%\begin{tabular}{cc}
%\includegraphics[width=0.45\textwidth]{bregman_symPCA_1_0402}&
%\includegraphics[width=0.45\textwidth]{bregman_symPCA_2_0402}
%\\
%(a) & (b)
%\\
%\includegraphics[width=0.45\textwidth]{bregman_nonsymPCA_alsg_1_0402}&
%\includegraphics[width=0.45\textwidth]{bregman_nonsymPCA_alsg_2_0402}
%\\
%(c) & (d)
%\end{tabular}
%\caption[haha]{
%({a}): $\mU_{0}(i,j)\stackrel{iid}{\sim}\calN(0,0.1)$;
%({b}): $\mU_{0}(i,j)\stackrel{iid}{\sim}\calN(0,10)$.
%({c}):  $\mU_{0}(i,j), \mV_{0}(i,j)\stackrel{iid}{\sim}\calN(0,0.1)$;
%({d}):  $\mU_{0}(i,j), \mV_{0}(i,j)\stackrel{iid}{\sim}\calN(0,10)$. }
%\label{fig:bgd:PCA}
%\end{center}
%\end{figure}

\appendix

\section{Proof of \Cref{lem:general:decrease}}\label{pf:lem:general:decrease}

\begin{proof}
We first show \eqref{eqn:general:decrease1}. For simplifying notations, denote $\x_{+}:=\x_{k}$ and $\x_{k-1}:=\x_{-}$. For the well-definedness, it suffices to show the solution of \eqref{eqn:general:decrease1} exists and is unique. First, since the objective function is continuous (as $f,h\in\mathcal{C}^{2}$), its level set
$
\tmop{Lev}_{\phi}(a):=\{\x: \phi(\x)\le a\}
$
is closed for any $a\in\R$,
where $\phi(\x):=f(\x_{-})+\lg \nabla f(\x_{-}), \x-\x_{-}\rg+D_{h}(\x,\x_{-})/\eta$. Second, when $h$ is super-coercive, we will show the objective function $\phi(\x)$ is coercive, which would imply the boundedness of the level set $\tmop{Lev}_{\phi}(a)$. Then together with the closedness of the level set, we can view \eqref{eqn:general:decrease1} as a minimization of a continuous function over a compact level set and hence the solution must exist. The uniqueness follows from the strong convexity of $\phi$ because $\nabla^{2}\phi=\nabla^{2}h$ and $h$ is strongly convex. Now, we show $\phi(\x)$ is coercive.

\begin{align*}
\phi(\x)
=&f(\x_{-})+\lg \nabla f(\x_{-}), \x-\x_{-}\rg+D_{h}(\x,\x_{-})/\eta
\\
=&f(\x_{-})+\lg \nabla f(\x_{-}), \x-\x_{-}\rg+(h(\x)-h(\x_{-})-\lg\nabla h(\x_{-}),\x-\x_{-}\rg)/\eta
\\
%=& \frac{h(\x)}{\eta} \!+\!\lg   \nabla f(\x_{-})\!-\!\frac{\nabla h(\x_{-})}{\eta}, \x\rg \!+\! (f(\x_{-})\!-\!\lg\nabla f(\x_{-}),\x_{-}\rg \!-\! \frac{h(\x_{-})}{\eta} \!+\!\lg\frac{\nabla h(\x_{-})}{\eta},\x_{-}\rg)
%\\
:=& {h(\x)}/{\eta}+\left\lg \a, \x\right\rg +b
\\
=& \|\x\|_{2} \left( {h(\x)}/\|\x\|_{2}/\eta + \left\langle\a, {\x}/{\|\x\|_{2}}\right\rangle\right)+b
\\
\ge&  \|\x\|_{2} \left( {h(\x)}/\|\x\|_{2}/\eta - \|\a \|_{2} \right)+b
\end{align*}
Now using the super-coercivity of $h$, we have  ${h(\x)}/{\|\x\|_{2}}/\eta > \|\a \|_{2}$ for any $\eta>0$ when $\|\x\|_{2}$ is large enough, implying that $\phi$ is coercive.

Now show the sufficient decrease property of \eqref{eqn:general:decrease1}.  By definition of of $\x_+$, we have
\begin{equation}
\begin{aligned}    
f(\x_{-})&= f(\x_{-})+\lg\nabla f(\x_{-}),\x-\x_{-} \rg\!+\!\frac{1}{\eta} D_{h}(\x,\x_{-})\Big|_{\x=\x_{-}}
\\
&\ge f(\x_{-})+\lg\nabla f(\x_{-}),\x_{+}-\x_{-} \rg\!+\!\frac{1}{\eta} D_{h}(\x_{+},\x_{-})
\\
&\ge f(\x_{+})\!-\!L_{f}D_{h}(\x_{+},\x_{-})\!+\!\frac{1}{\eta} D_{h}(\x_{+},\x_{-})
\\
&\ge f(\x_{+})\!+\!\left( \frac{1}{\eta}\!-\!L_{f} \right)\frac{\sigma}{2}\|\x_{+}\!-\!\x_{-}\|_{2}^{2}
\label{showing:coercive:using:super-coercive}
\end{aligned}
\end{equation}
where the second inequality is by the general descent lemma \eqref{eqn:general:descent:lemma} with $\y=\x_{-},\x=\x_{+}$ and the last inequality follows from the $\sigma$-strong convexity of $h$.

We now show \eqref{eqn:general:decrease2}.  Its well-definedness follows in the same way by showing that the objective function of \eqref{eqn:general:decrease2} is coercive (by using the same analysis as \eqref{showing:coercive:using:super-coercive} combined with the lower-boundedness of $f$) and strongly convex (since $f$ satisfies $L_{f}$-relative smoothness condition w.r.t. $h$ and $\eta\in(0,1/L_{f})$).  The sufficient decrease property follows  by
\begin{align*}
f(\x_{-})
= f(\x_{-})\!+\!\frac{1}{\eta} D_{h}(\x_{-},\x_{-})
&\ge f(\x_{+})\!+\!\frac{1}{\eta} D_{h}(\x_{+},\x_{-})
\ge f(\x_{+})\!+\!\frac{\sigma}{2\eta} \|\x_{+}-\x_{-}\|_{2}^{2}
\end{align*}
\end{proof}

\section{Proof of \Cref{lem:h:exist}}\label{sec:pf:lem:h:exist}

\subsection{Proof for the Single-variable Case}
First, any $d$th-degree polynomial function $f(\x)$ can be represented as
$
f(\x)=\sum_{k=0}^{d} \lg \calA_{k}, \x^{\otimes k}\rg,
$
where  $\otimes$ is the tensor/outer product, $\x^{\otimes k}:=\x\otimes\x\otimes\cdots\otimes\x$, and the coefficients of $k$th-degree monomials are arranged as a $k$th-order tensor $\calA_{k}\in \R^{n\times n \cdots\times n}$. For convenience, we denote $\x^{\otimes 0}=1$ and $\calA_0\in\R$. Further, by the supersymmetry, $\x^{\otimes k}$,  we can assume $\calA_{k}$ for $k\ge 2$ are also supersymmetric tensors since otherwise we can rearrange them as supersymmetric tensors.

Now, we show that polynomial $f$ with the particular $h(\x)=\frac{\alpha}{{d}}\|\x\|_{2}^{d}+\frac{\beta}{2}\|\x\|_{2}^{2}+1$ in \eqref{eqn:h:exist}
satisfies all  Assumptions \ref{assump:f}--\ref{assump:h}.

\smallskip\paragraph{\bf (1) Showing \Cref{assump:f}}
This directly follows from that $f$ is a lower-bounded polynomial function and any polynomial function is a twice differentiable KL function. 

\smallskip\paragraph{\bf (2) Showing \Cref{assump:h}}

First of all, we show $h$ is a $\calC^2$, super-coercive, and strongly convex function.
Let us compute the Hessian of $h(\x)$:
\begin{align*}
\nabla^{2}h(\x)&=\frac{\alpha}{{d}} {d}({d}-2)\|\x\|_{2}^{{d}-4}\x\x^{\top} +\left(\frac{\alpha}{{d}} {d}\|\x\|_{2}^{{d}-2}+2\frac{\sigma}{2}\right)\eye_{n}
\\
&={\alpha}({d}-2)\|\x\|_{2}^{{d}-2}\frac{\x}{\|\x\|_2}\frac{\x^\top}{\|\x\|_2}+\left({\alpha}\|\x\|_{2}^{{d}-2}+{\sigma}\right)\eye_{n}
\end{align*}
which immediately implies that $h(\x)$ is $\sigma$-strong convex and
that $h\in\calC^2$ for any $d\ge 2$. Now we show the super-coercivity:
\[\lim_{\|\x\|_{2}\to\infty} \frac{h(\x)}{\|\x\|_{2}}= \lim_{\|\x\|_2\to\infty}\left(\frac{\alpha}{{d}}\|\x\|_{2}^{{d}-1}+\frac{\sigma}{2}\|\x\|_{2}+\frac{1}{\|\x\|_2}\right)\ge \lim_{\|\x\|_2\to\infty} \frac{\sigma}{2}\|\x\|_{2} =\infty\] 

Second, we show the relative smoothness of $f$ w.r.t. $h$.
It suffices to show that there exists a constant $L_{f}$ such that
$
L_{f}\nabla^{2}h(\x)\pm \nabla^{2}f(\x)\succeq 0,~\forall~ \x.
$
Towards that end, we first compute its Hessian matrix of  $f(\x)=\sum_{k=0}^{d} \lg \calA_{k}, \x^{\otimes k}\rg
$ as 
\begin{align}
\nabla^{2} f(\x)=\sum_{k=2}^{d} k(k-1) \calA_{k}\times_{1}\x \times_{2}\x\times_{3}\x\cdots\times_{k-2}\x
\label{eqn:hess:f}
\end{align}
where we have used that $\calA_k$ is a supersymmetric tensor. Here ${\times}_k$ denotes the $k$th-mode tensor-vector product for any $N$th-order tensor $\calA$ (cf. \cite{kolda2009tensor}).
%, i.e., 
%\[\calA\times_k\x=\left[\sum_{j=1}^{n}\calA(i_1,\cdots,i_{k-1},j,i_{k+1},\cdots,i_N)\x(j)\right]_{i_1,\cdots,i_{k-1},i_{k+1},\cdots,i_N}\]

Now, we can use the triangle inequality and the definition of tensor spectral norm to control its Hessian spectral norm:
\begin{equation}
\begin{aligned}
\|\nabla^{2}f(\x)\|
&\le \sum_{k=2}^{d} k(k-1) \|\calA_{k}\times_{1}\x \times_{2}\x\times_{3}\x\cdots\times_{k-2}\x\|
\\
&\le \sum_{k=2}^{d} k(k-1)  \|\calA_{k}\| \|\x\|_{2}^{k-2}
\\
&\le\sum_{k=2}^d k(k-1)\|\calA_k\| (1+\|\x\|_2^{d-2})
\end{aligned}
\label{eqn:bounded:hess:polynomial}
\end{equation}
Meanwhile, by the previous expression of $\nabla^2 h(\x)$, we  have 
$
\nabla^{2}h(\x)\succeq (\alpha \|\x\|_{2}^{{d}-2}+\sigma)\eye_{n}.
$
Therefore,  $f$ satisfies $L_f$-relative smoothness condition w.r.t. $h$ for any
$
L_{f}\ge
\sum_{k=2}^{d}  k(k-1)  \|\calA_{k}\| \max\left\{ \frac{1}{\sigma},\frac{1}{\alpha}  \right\}.
$

This completes the proof of showing  Assumptions \ref{assump:f}--\ref{assump:h}.

\subsection{Proof for the Bi-variable Case}
First, any $(d_1,d_2)$th-degree polynomial function $f(\x,\y)$ can be represented as
$
f(\x,\y)=\sum_{i=0}^{d_1} \sum_{j=0}^{d_2} \lg \calA_{i,j}, \x^{\otimes i}\otimes\y^{\otimes j}\rg,
$
where the coefficients of $(i,j)$th-degree monomials are arranged as $\calA_{i,j}\in \prod_{k=1}^i \R^{n} \times \prod_{k=1}^j\R^{m}$.
For convenience, we denote $\x^{\otimes 0}=\y^{\otimes 0}=1$ and $\calA_{0,0}\in\R$. 
Further, due to supersymmetric tensors $\x^{\otimes i}$ and $\y^{\otimes j}$,  we can always assume $\calA_{i,j}$ for $i\ge 2$ or $j\ge 2$ as bi-supersymmetric tensors, i.e., those entries  $\calA_{i,j}(k_1,\cdots,k_i,k_{i+1},\cdots,k_{i+j})$ have the same value despite the order of $(k_1,k_2,\cdots,k_j)$ and the order of $(k_{i+1},k_{i+1},\cdots,k_{i+j})$.

\smallskip\paragraph{\bf (1) Showing $h$ is bi-super-coercive and $\sigma$-strongly bi-convex} Observe that
\begin{align*}
&\lim_{\|\x\|_2\to\infty} \frac{h(\x,\y)}{\|\x\|_2}\ge \lim_{\|\x\|_2\to\infty}\frac{\sigma}{2}\|\x\|_2=\infty,\quad\lim_{\|\y\|_2\to\infty} \frac{h(\x,\y)}{\|\y\|_2}\ge \lim_{\|\x\|_2\to\infty}\frac{\sigma}{2}\|\y\|_2=\infty
\end{align*}
which implies that $h(\x,\y)$ is bi-super-coercive. It remains to show that $h(\x,\y)$ is $\sigma$-strongly bi-convex. Towards that end, we compute the partial Hessians of $h(\x,\y)$:
\begin{align}
\nabla^2_{\x\x}h(\x,\y)&\!=\!\left(\!\frac{\alpha}{d_2}\|\y\|_{2}^{d_2}\!+\!\frac{\sigma}{2}\|\y\|_{2}^{2}
\!+\!1\!\right)\!\!\left(\!{\alpha}({d_1}\!-\!2)\|\x\|_{2}^{{d_1}\!-\!2}\frac{\x}{\|\x\|_2}\frac{\x^\top}{\|\x\|_2}\!+\!(\!{\alpha}\|\x\|_{2}^{{d_1}\!-\!2}\!\!+\!{\sigma})\eye_{n}\!\!\right)
\label{partial:hxx}
\\
\nabla^2_{\y\y}h(\x,\y)&\!=\!\left(\!\frac{\alpha}{d_1}\|\x\|_{2}^{d_1}\!+\!\frac{\sigma}{2}\|\x\|_{2}^{2}
\!+\!1\!\right)\!\!\left(\!{\alpha}({d_2}\!-\!2)\|\y\|_{2}^{{d_2}\!-\!2}\frac{\y}{\|\y\|_2}\frac{\y^\top}{\|\y\|_2}\!+\!({\alpha}\|\y\|_{2}^{{d_2}-2}\!\!+\!{\sigma})\eye_{m}\!\!\right)
\label{partial:hyy}
\end{align}
This then implies that 
$
\nabla^2_{\x\x}h(\x,\y)\succeq  \sigma \eye_n$ 
and  
$
\nabla^2_{\y\y}h(\x,\y)\succeq  \sigma \eye_m.
$
Therefore, $h(\x,\y)$ is $\sigma$-strongly bi-convex. This completes the proof of Part 1.

\smallskip\paragraph{\bf (2) Showing $f$ is relative bi-smooth w.r.t. $h$} In one way, using the bi-supersymmetry of $\calA_{i,j}$, we have
\begin{equation*}
\begin{aligned}
\|\nabla_{\x\x}^2f(\x,\y)\|
&\le \sum_{i=2}^{d_1}\sum_{j=0}^{d_2} i(i-1) \|\calA_{i,j}\times_{1}\x \times_{2}\x\cdots\times_{i-2}\x\times_{i+1}\y\cdots\times_{i+j}\y\|
\\
&\le \sum_{i=2}^{d_1}\sum_{j=0}^{d_2} i(i\!-\!1) \|\calA_{i,j}\| \|\x\|_2^{i-2}\|\y\|_2^{j}
\\
&\le      (1\!+\!\|\x\|_2^{d_1-2})(1\!+\!\|\y\|_2^{d_2})
\sum_{i=2}^{d_1}\sum_{j=0}^{d_2} i(i\!-\!1) \|\calA_{i,j}\| 
\end{aligned}
\end{equation*}
Similarly, $\|\nabla_{\y\y}^2f(\x,\y)\|
\le (1+\|\x\|_2^{d_1}) (1+\|\y\|_2^{d_2-2}) \sum_{i=0}^{d_1}\sum_{j=2}^{d_2} j(j-1) \|\calA_{i,j}\|.
$

In another way, by \eqref{partial:hxx} and \eqref{partial:hyy}, 
\begin{equation}
\begin{aligned}
\nabla^2_{\x\x}h(\x,\y)&\succeq\left({\alpha}\|\x\|_{2}^{{d_1}-2}+{\sigma}\right)\left(\frac{\alpha}{d_2}\|\y\|_{2}^{d_2}
+1\right)\eye_{n},
\\
\nabla^2_{\y\y}h(\x,\y)&\succeq\left(\frac{\alpha}{d_1}\|\x\|_{2}^{d_1}
+1\right)\left({\alpha}\|\y\|_{2}^{{d_2}-2}+{\sigma}\right)\eye_{m}    
\end{aligned}
\label{eqn:H:bounds}
\end{equation}
Therefore, we have that  $f$ is $(L_1,L_2)$-relative bi-smooth with respect to $h$ for any
\[
L_1 \ge \frac{1}{L_0}\max\left\{1,\frac{1}{\sigma}, \frac{d_2}{\alpha} \right\}
\text{ and }
L_2 \ge \frac{1}{L_0}\max\left\{ 1,\frac{1}{\sigma}, \frac{d_1}{\alpha} \right\}
\]
with $L_0=\sum_{i=2}^{d_1}\sum_{j=0}^{d_2} i(i-1) \|\calA_{i,j}\|.$

\section{Proof of \Cref{lem:h:exist:2}}
\label{proof:lem:h:exist:2}
\begin{proof}
We will divide the proof into two parts.

\smallskip\paragraph{\bf (1) Showing the relative smoothness condition} By definition, it suffices to show that there is a $L_f>0$ such that
$
L_f\nabla^2h(\x) \pm \nabla^2f(\x)\succeq 0
$
in the whole domain.
In one way, by assumption of $f(\x)$, we have
\[\|\nabla^{2}f(\x)\|\le C_{1}+C_{2}\|\x\|_{2}^{d-2}\]
in the whole domain with $d\ge 2$ for some positive constants $C_{1},C_{2}$.
In another way, by direct computations, $h(\x)$ in \eqref{eqn:h:exist} satisfies that
\[
\nabla^{2}h(\x)\succeq (\alpha \|\x\|_{2}^{d-2}+\sigma)\eye_{n} \text{ for any $d\ge2$},
\]
in the whole domain.
Therefore, it is clear to see that
$
L_f\nabla^2h(\x) \pm \nabla^2f(\x)\succeq 0
$
in the whole domain
for any $L_f\ge\max\{\frac{C_{1}}{\sigma},\frac{C_{2}}{\alpha}\}$.

\smallskip\paragraph{\bf (2) Showing relative bi-smoothness condition} In one way, for any $d_1,d_2\ge 2$,
\begin{align*}
\|\nabla^{2}_{\x\x}f(\x,\y)\|&\le (C_{1}+C_{2}\|\x\|_{2}^{d_1-2})(C_3+C_4\|\y\|_2^{d_2}),
\\
\|\nabla^{2}_{\y\y}f(\x,\y)\|&\le (C_5+C_6\|\x\|_2^{d_1})(C_{7}+C_{8}\|\y\|_{2}^{d_2-2})    
\end{align*}
Then this along this \eqref{eqn:H:bounds} implies that
$f$ is $(L_1,L_2)$-relative bi-smooth w.r.t. $h$
for any 
\[L_1\ge\max\left\{\frac{C_1}{\sigma},\frac{C_2}{\alpha}, C_3, \frac{C_4 d_2}{\alpha} \right\}
\text{ and }
L_2\ge\max\left\{ C_5, \frac{C_6 d_1}{\alpha},\frac{C_7}{\sigma}, \frac{C_8}{\sigma} \right\}.
\]
\end{proof}

\section*{Acknowledgments}The authors gratefully acknowledge Waheed Bajwa, Haroon Raja, Clement Royer, Yue Xie, Xinshuo Yang, and Stephen J. Wright for helpful discussions.

\bibliographystyle{siamplain}
\bibliography{nonconvex}

\end{document}